\newtheorem{theore}{Theorem}
\newtheorem{propositio}{Proposition}
\newtheorem{theorem}{Theorem}[section]
\newtheorem{proposition}[theorem]{Proposition}
\newtheorem{corollary}[theorem]{Corollary}
\newtheorem{claim}[theorem]{Claim}
\newtheorem{lemma}[theorem]{Lemma}
\newtheorem{remark}[theorem]{Remark}
\newtheorem{definition}[theorem]{Definition}
\newtheorem*{theorem-non}{Theorem}
\DeclareMathOperator{\rk}{rank}
\DeclareMathOperator{\gal}{Gal}
\DeclareMathOperator{\GL}{GL}
\DeclareMathOperator{\SL}{SL}
\DeclareMathOperator{\End}{End}
\DeclareMathOperator{\tr}{tr}
\DeclareMathOperator{\Aff}{Aff}
\DeclareMathOperator{\Stab}{Stab}
\author{Federico Buonerba}
\address{\tiny{Federico Buonerba\newline Courant Institute of Mathematical Sciences,
 New York University, 
 251 Mercer Street, 
 New York, NY 10012, USA}}
 \email{buonerba@cims.nyu.edu}
\author{Fedor Bogomolov} 
\address {\tiny{Fedor Bogomolov \newline Courant Institute of Mathematical Sciences,
 New York University, 
 251 Mercer Street, 
 New York, NY 10012, USA\newline
 National Research University Higher School of Economics, Russian Federation,
AG Laboratory, HSE, 6 Usacheva str., Moscow, Russia, 119048}}
\email{bogomolo@cims.nyu.edu}
\author{Nikon Kurnosov}
\address {\tiny{Nikon Kurnosov\newline
 National Research University Higher School of Economics, Russian Federation,
AG Laboratory, HSE, 6 Usacheva str., Moscow, Russia, 119048 \newline
Department of Mathematics,
University of Georgia,
1023 D. W. Brooks Drive,
Athens, GA 30602, USA}}
\email{nikon.kurnosov@gmail.com}
\title{Classifying $VII_0$ surfaces with $b_2=0$ via group theory}
\begin{document}
\begin{abstract}
 We give a new proof of Bogomolov's theorem, that the only $VII_0$ surfaces with $b_2=0$
 are, up to an \'etale cover, those constructed by Hopf and Inoue.
 The proof follows the strategy of the original one, but it is of purely
 group-theoretic nature.

\end{abstract}

\maketitle

Our aim is to provide a new proof of the well known:
\begin{theore}\cite[Bogomolov]{1}\label{bogomolov}
 Every $VII_0$ surface with $b_2=0$ has an \'etale cover that carries a holomorphic foliation.
 Therefore an \'etale cover of $X$ is either a Hopf or a Inoue surface.
\end{theore}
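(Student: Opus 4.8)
The plan is to lift everything to the universal cover $\tilde X$ and to reduce the classification to a statement about the deck group $\Gamma = \pi_1(X)$, following the original analytic strategy but reading the holomorphic foliation off the algebraic structure of $\Gamma$ rather than off a sheaf-cohomology computation. Since $X$ is $VII_0$ with $b_2 = 0$, one has $\chi_{\mathrm{top}}(X) = 2 - 2b_1 + b_2 = 0$ and $b_1 = 1$, so the abelianization of $\Gamma$ has rank one and there is a surjection $\phi\colon \Gamma \to \mathbb{Z}$ (onto the free part of the abelianization). Writing $N = \ker\phi$ gives a short exact sequence $1 \to N \to \Gamma \to \mathbb{Z} \to 1$ together with a monodromy automorphism $\sigma$ of $N$ induced by a lift of a generator of $\mathbb{Z}$, and the entire argument splits according to whether $N$ is finite or infinite.

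First I would fix the analytic model of the cover. The geometric input I want is that such a surface carries a holomorphic affine structure: a developing map $\mathrm{dev}\colon \tilde X \to \mathbb{C}^2$ onto a $\Gamma$-invariant domain $\Omega \subsetneq \mathbb{C}^2$, equivariant for a holonomy representation $\rho\colon \Gamma \to \mathrm{Aff}(2,\mathbb{C}) = \mathbb{C}^2 \rtimes \mathrm{GL}(2,\mathbb{C})$. Both expected answers fit this template: a Hopf surface has $\Omega = \mathbb{C}^2\setminus\{0\}$ with $\rho(\Gamma)$ linear, and an Inoue surface has $\Omega = \mathbb{H}\times\mathbb{C}$ with $\rho(\Gamma)$ generated by a diagonal scaling and a rank-three lattice of translations. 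Once $\Gamma$ is realized as a discrete, cocompact, freely acting subgroup of $\mathrm{Aff}(2,\mathbb{C})$, the problem becomes the purely group-theoretic classification of such subgroups subject to $b_1 = 1$ and $b_2 = 0$.

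I then analyze $\rho$ through its linear part $L\colon \Gamma \to \mathrm{GL}(2,\mathbb{C})$ together with the sequence above. If $N$ is finite, $\Gamma$ is virtually infinite cyclic; after an \'etale cover a generator acts as a linear contraction of $\mathbb{C}^2\setminus\{0\}$, its eigen-decomposition yields invariant holomorphic foliations, and the quotient is a primary Hopf surface. If $N$ is infinite, I would show that proper discontinuity and cocompactness together with $\chi_{\mathrm{top}}=0$ force $N$ to be virtually free abelian of rank three and $\sigma$ to act on $N\otimes\mathbb{R}\cong\mathbb{R}^3$ as an integral matrix with one real eigenvalue $>1$ and a complex-conjugate pair of modulus $<1$; this is exactly the data $M\in\mathrm{SL}(3,\mathbb{Z})$ reconstructing $\Gamma = \mathbb{Z}^3\rtimes_M\mathbb{Z}$ acting on $\mathbb{H}\times\mathbb{C}$, i.e.\ an Inoue surface $S_M$. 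In either case the common eigen-decomposition of the $\Gamma$-action supplies the invariant holomorphic line field, hence the foliation, on a finite cover, which is the first assertion of the theorem.

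The main obstacle, where the geometry does the real work, is the construction and global control of the affine structure: proving that $\tilde X$ actually develops into $\mathbb{C}^2$ affinely and that $\Gamma$ acts by affine maps, rather than merely producing a local flat structure or an exotic discrete group of biholomorphisms with no affine model. I expect to approach this by first producing a single $\Gamma$-invariant holomorphic foliation on $\tilde X$ — exploiting $\chi(\mathcal O_X)=0$ and the vanishing of the intersection form forced by $b_2=0$ to make the relevant obstruction spaces disappear — and then linearizing the action along the leaf space. The secondary, genuinely group-theoretic difficulty is excluding non-abelian or higher-rank kernels $N$: here cocompactness together with $\chi_{\mathrm{top}}=0$ must be translated into polynomial growth, forcing $\Gamma$ to be polycyclic of Hirsch length four and pinning down $\sigma$ up to conjugacy.
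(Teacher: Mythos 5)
Your proposal has a genuine gap, and it sits at the exact center of the argument: the step where you ``first produce a single $\Gamma$-invariant holomorphic foliation on $\tilde X$'' from the vanishing of obstruction spaces is the theorem itself, and no such direct cohomological argument is available. With all Chern numbers zero, Riemann--Roch only gives $\chi(\mathscr F)=0$ for every tensor bundle $\mathscr F$; vanishing statements can never force a non-zero section of $\mathscr L\otimes\Omega^1$ into existence, which is what a foliation is. The paper runs the logic in the opposite direction: it argues by contradiction, assuming that no \'etale cover of $X$ carries a foliation, and it is precisely this no-foliation hypothesis that kills the obstruction spaces $H^1(\End(TX)\otimes\Omega^1)$, $H^0(K_X\otimes\End(TX))$, $H^0(\End(TX)\otimes\Omega^1)$ --- every vanishing lemma in Section I has the form: a non-trivial section would have a kernel, and that kernel would define a foliation, contradiction. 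So the affine structure is built \emph{from} the absence of a foliation, arithmetic restrictions on the holonomy are then extracted via the Galois action on flat bundles, and the contradiction arrives only after the geometric and group-theoretic analysis of Sections II--IV. Your plan --- foliation first, then affine structure by ``linearizing along the leaf space,'' then classification of $\Gamma$ --- is circular as stated: the foliation you need at the start is the conclusion you are trying to reach.

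There is a second, independent gap: even granting an affine structure, you assume the developing map is a biholomorphism onto a $\Gamma$-invariant domain $\Omega\subsetneq\mathbf C^2$ and that $\rho(\Gamma)$ is a discrete, cocompact, freely acting subgroup of $\Aff(2,\mathbf C)$. None of this is automatic: the holonomy of an affine structure need not be injective or discrete, and the developing map need not be injective --- it is only a local biholomorphism. Handling exactly this failure is most of the paper's work: Section II sets up the dichotomy between the ``classical'' case (developing map an embedding, $\Gamma$ discrete in $\tilde A(2,\mathbf R)$) and the ``pathological'' case (only a local isomorphism), and Section III needs a tree-theoretic argument on components of fibers of the angle map, together with cohomological-dimension bounds, precisely because discreteness can fail. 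Finally, your reduction of the endgame to ``polycyclic of Hirsch length four with monodromy in $\mathrm{SL}(3,\mathbf Z)$'' is not something the paper attempts: once the existence of the foliation is established, the Hopf/Inoue dichotomy is quoted from Inoue's theorem rather than reproved, and proving it by bare hands would be a substantial additional project.
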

The first part of the proof in \textit{op.cit.}, which is the content of our Section \ref{three},
aims at constructing an affine structure on such surfaces, as well as
establishing its uniqueness.
Both the existence and uniqueness statements follow from the vanishing
of certain cohomology groups, which could be naturally thought of as obstruction spaces.
This structure provides two objects: a representation of the fundamental group of the surface into the affine group 
$\rho: \Gamma:=\pi_1(X)\to \Aff(2,\mathbf C)$,
with corresponding linearization $l\rho$ into $\GL_2(\mathbf C)$;
and a $\rho$-equivariant local biholomorphism $p_{\text{aff}}:V:=\tilde X\to \mathbf C^2$.\\
The automorphism group of the field $\mathbf C$ which we denote as $\gal (\mathbf {C/Q})$ acts naturally on the set of
flat vector bundles over $X$. 
It can be shown that indeed the tangent bundle is stable under this action.
Such stability follows from the fact that these surfaces have very few vector bundles with at least one non-vanishing
cohomology group, and that cohomology with locally constant coefficients is not altered by the Galois action.
It follows that the linear representations, obtained by twisting $l\rho$ with field automorphisms, 
are conjugated to each other.\\
This observation imposes very strong arithmetic restrictions on such representation, namely that
the image $l\rho(\Gamma)$ can be conjugated into $\GL_2(\mathbf Q)$ or into $H_2(F)$, where the
latter is the unit group in a quaternion algebra over a quadratic field $F$.
Crucially, however, the quadratic extension $F/\mathbf Q$ must be real. This follows from a very particular fact
 about $VII_0$ surfaces, namely that dim$(H^1(X,\mathbf C(l\rho)))=1$ - where $\mathbf C(l\rho)$ is the flat bundle 
 induced by $l\rho$.
This fact is also crucial in deriving that the image $\rho(\Gamma)$ is contained in 
the real affine subgroup $\Aff(2,\mathbf R)\subset \Aff(2,\mathbf C)$. 
The group $\Aff(2,\mathbf R)$ above leaves invariant a linear subspace $R \xrightarrow{\sim} \mathbf R^2\subset \mathbf C^2$,
and there is a two-dimensional contractible Lie subgroup $G_K\subset \Aff(4,\mathbf R)$
of real affine transformations acting on $\mathbf C^2$,
which is free on $\mathbf C^2\setminus R$ and fixes $R$ pointwise.
The quotient of $\mathbf C^2\setminus R$ by its
action is a Moebius band isomorphic to the space $Gr(1,2)$ of lines
in $\mathbf R^2$, which carries a natural angle map $s:Gr(1,2)\to S^1$.
All those structures are inherited by $V$ and since they are $\Gamma$-invariant,
also by $X$.\\
The pre-image of the real subspace $R$ under the projection $p_{\text{aff}}$
is a, possibly empty, closed 2-dimensional real submanifold $V_R\subset V$,.
The action of $G_K$ on $V\setminus V_R$ is free, while trivial on $V_R$.
The image of $V_R$ in $X$ is a compact two-dimensional manifold
with affine structure, {\it i.e} 
a possibly empty finite union of compact
tori or Klein bottles.
Thus the quotient $S:=(V\setminus V_R)/G_K$ is an open Riemann surface 
with a $\rho$-equivariant local biholomorphism $S\to Gr(1,2)$.
Diagram \ref{diagrammmm} consists of maps commuting
with the action of $\Gamma$, and summarizes the situation.\\
We thus have two possible radically different cases:
\begin{enumerate}
 \item $ S\to Gr(1,2)$ is a cyclic covering, which we call classical case.
 \item $ S\to Gr(1,2)$ is just a local isomorphism, hence rather wild, which we
 call pathological case.
\end{enumerate}
After this was achieved the original strategy, first sketched in \cite{1} and then expanded on in \cite{9},
aimed at deriving a contradiction profitting from the interplay between: the strong arithmetic
constraints we have on the affine representation $\rho$, and; the analysis
of geometric information
arising from the existence of a complex affine structure $p_{\text{aff}}$ with the above properties.
Unfortunately this derivation in \cite{9} is rather long, 
and contains complicated topological arguments.
Our contribution is to replace such a proof by a much simpler argument.\\
The case $V_R=\emptyset$ contains the main innovation: a combination of a group-theoretic argument, 
together with an analysis of the
tree structure of the topological Stein quotient $S_s$ of $\tilde S$, corresponding
to the induced angle projection $\tilde S\to \tilde Gr(1,2)\to \mathbf R$.
Though the Stein quotient $S_s$ is potentially non-Hausdorff,
we can use the fact that $\Gamma $ acts densely on $S$,
to conclude that if $V_R$ is empty, then the map 
$\tilde S\to \tilde Gr(1,2)$ must be an embedding.
This implies that $\Gamma $ is realized as a discrete 
subgroup of $\tilde A(2,\mathbf R)$. A contradiction is derived by purely group theoretic means.\\
In case $V_R\neq \emptyset$, we venture into a detailed study of the induced affine structure on
$V_R$, finally leading to a contradiction. 

It is interesting to remark that the key player, in the story, is our two dimensional
Lie group $G_K$ acting on $V$. In fact, its orbits are holomorphic curves, but the
resulting foliation is not holomorphic.\\
We should also remark that we can freely pass to finite-index subgroups of $\Gamma$, without
loss of generality. This is particularly important once we know that $l\rho$ has arithmetic values,
so then by Selberg's lemma, \cite{alp}, we can assume $l\rho(\Gamma)$ is torsion-free.\\
An analytic approach to this theorem appeared in \cite{8}, 
and was further completed in \cite{4} and \cite{10}.\\
Our paper is structured as follows: in the first section we construct the affine structure, and prove its
arithmetic properties. In the second section we study in details its geometry. In the third section we
prove that the case $V_R=\emptyset$ is impossible, and in the fourth we prove that $V_R\neq \emptyset$
is impossible.

\noindent
\textbf{Acknowledgements} 
Fedor Bogomolov and Nikon Kurnosov acknowledge that the article
was prepared within the framework of a subsidy granted to the HSE by the Government
of the Russian Federation for the implementation of the Global Competitiveness Program.
The second author was partially
supported by EPSRC programme grant EP/M024830, Simons Fellowship
and Simons travel grant.\\

\section{Affine structures on $VII_0$ surfaces with $b_2=0$ and no holomorphic foliations}
\label{three}
The content of this section is the proof of:
\begin{proposition}\label{bogomolovvv}
 Let $X$ be a $VII_0$ surface with $b_2=0$ and no holomophic foliation. Then $X$ carries
 an affine structure, which defines a representation $\rho:\pi_1(X)\to \Aff(2,\mathbf C)$.
 Furthermore, the image of its linearization $l\rho$ is conjugated to a subgroup of
 $\GL_2(\mathbf Q)$,
 or of the units $H_2(F)$ of a quaternion algebra over $F$, a real quadratic extension of $\mathbf Q$.
 \end{proposition}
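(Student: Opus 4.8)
The plan is to establish Proposition \ref{bogomolovvv} in three stages: first construct the affine structure, then prove the tangent bundle is Galois-stable, and finally extract the arithmetic constraints on $l\rho$.

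\textbf{Stage one: constructing the affine structure.} Let $X$ be a $VII_0$ surface with $b_2=0$ admitting no holomorphic foliation. The goal is to produce a flat torsion-free holomorphic affine connection on $X$, equivalently a $\Gamma$-equivariant local biholomorphism $p_{\mathrm{aff}}:\tilde X\to \mathbf C^2$ together with a representation $\rho:\Gamma\to\Aff(2,\mathbf C)$. Following Bogomolov's strategy, I would realize such a structure as a section of an appropriate jet bundle and identify the obstructions to both existence and uniqueness as elements of cohomology groups built from the tangent sheaf $T_X$ and its symmetric powers. Concretely, existence of the connection is obstructed in $H^1(X,\Sym^2\Omega^1_X\otimes T_X)$-type spaces, and uniqueness is controlled by $H^0$ of related bundles. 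The key input is that for these surfaces the relevant cohomology groups vanish: this uses $b_2=0$, the classification of line bundles (so $\Pic$ and the canonical bundle are severely constrained), and crucially the hypothesis of no holomorphic foliation, which rules out the $H^0$ contributions that would otherwise produce either non-existence or non-uniqueness. Once the obstruction spaces vanish, the affine structure exists and is unique, and linearizing $\rho$ at the fixed point of the affine action yields $l\rho:\Gamma\to\GL_2(\mathbf C)$.

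\textbf{Stage two: Galois stability of the tangent bundle.} The flat $\GL_2(\mathbf C)$-bundle $\mathbf C^2(l\rho)$ is, by construction, the flat model of the holomorphic tangent bundle $T_X$. I would let $\sigma\in\gal(\mathbf C/\mathbf Q)$ act on flat bundles by twisting the representation entrywise, $l\rho\mapsto l\rho^\sigma$, and observe that the Betti (locally constant) cohomology is insensitive to this action, so $\dim H^i(X,\mathbf C^2(l\rho^\sigma))=\dim H^i(X,\mathbf C^2(l\rho))$ for all $i$. The heart of the matter is to show that, among flat rank-two bundles on $X$, the tangent bundle is characterized by its cohomology, so that $\mathbf C^2(l\rho^\sigma)\cong \mathbf C^2(l\rho)$ as flat bundles, whence $l\rho^\sigma$ is conjugate to $l\rho$ in $\GL_2(\mathbf C)$. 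Here I would invoke the rigidity of these surfaces: they carry very few holomorphic rank-two bundles with nonvanishing cohomology, precisely because $b_2=0$ forces the discriminant and Chern classes into a tight range and the Enriques--Kodaira analysis of $VII_0$ surfaces pins down $H^0$, $H^1$, $H^2$ of the candidate bundles. The uniqueness of the affine structure from Stage one is what guarantees that a Galois-conjugate affine structure must coincide with the original, giving the conjugacy $l\rho^\sigma\sim l\rho$.

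\textbf{Stage three: arithmetic consequences.} Since $l\rho^\sigma$ is $\GL_2(\mathbf C)$-conjugate to $l\rho$ for every $\sigma\in\gal(\mathbf C/\mathbf Q)$, the character $\chi=\tr\circ\, l\rho$ satisfies $\chi^\sigma=\chi$, so all traces lie in $\mathbf Q$, and the same holds for determinants. By a standard argument on representations with Galois-invariant character (a form of the theory of the trace field and of quaternion algebras, as in the rigidity theory of two-dimensional linear groups), the $\mathbf Q$-algebra generated by $l\rho(\Gamma)$ inside $M_2(\mathbf C)$ is defined over $\mathbf Q$; being a four-dimensional semisimple $\mathbf Q$-algebra with a two-dimensional faithful representation, it is either $M_2(\mathbf Q)$ (forcing $l\rho(\Gamma)$ into $\GL_2(\mathbf Q)$ after conjugation) or a quaternion division algebra $D$ over a quadratic field $F$, with $l\rho(\Gamma)$ landing in the unit group $H_2(F)$. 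It remains to rule out the complex-quadratic case, i.e.\ to force $F/\mathbf Q$ real. For this I would use the stated special feature of $VII_0$ surfaces that $\dim H^1(X,\mathbf C(l\rho))=1$: the existence of this one-dimensional invariant cohomology class produces a $\Gamma$-invariant real structure, forcing $\rho(\Gamma)\subset\Aff(2,\mathbf R)$ and hence an embedding $F\hookrightarrow\mathbf R$, so $F$ is real.

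\textbf{Main obstacle.} I expect the genuine difficulty to be Stage two, specifically the claim that the tangent bundle is rigid under the Galois action. Establishing that these surfaces admit \emph{no} competing flat rank-two bundle with the same cohomological profile requires a delicate case analysis of holomorphic bundles on $VII_0$ surfaces with $b_2=0$, and the interplay with the uniqueness of the affine structure is subtle. The passage in Stage three from a Galois-invariant trace field to the dichotomy $\GL_2(\mathbf Q)$ versus quaternionic units is essentially algebraic and should be routine, and the reality of $F$ follows cleanly once the one-dimensionality of $H^1(X,\mathbf C(l\rho))$ is in hand; but verifying the cohomological vanishings in Stage one and the rigidity in Stage two is where the real analytic and geometric work of the paper must be concentrated.
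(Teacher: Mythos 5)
Your outline does follow the paper's strategy faithfully---obstruction-theoretic construction of the affine structure (Proposition \ref{affine}), Galois stability of $TX$ through the invariance of locally constant cohomology (Proposition \ref{wonderful}), and the trace dichotomy (Proposition \ref{conjugation})---but in the two places where the actual work happens your argument has genuine gaps. In Stage two, the fallback you offer, that ``uniqueness of the affine structure guarantees that a Galois-conjugate affine structure must coincide with the original,'' is a non sequitur: an automorphism $\sigma\in\gal(\mathbf{C/Q})$ twists the representation $l\rho$ and hence the flat bundle, but it does not act on affine structures. There is no developing map attached to $l\rho^{\sigma}$ a priori---$\sigma$ is not continuous and does not act on $X$ or on its universal cover---so the uniqueness in Proposition \ref{affine} says nothing about $l\rho^{\sigma}$ until one already knows $TX^{\sigma}\cong TX$, which is exactly the thing to be proved. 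Moreover, your cohomological route is missing an essential intermediate input: the paper must first prove $K_X^{\sigma}\cong K_X$ (Proposition \ref{Bombieri}, via the vanishing Lemmas \ref{van1}--\ref{van3} for rank-one local systems), and only then can it run the case analysis on the sequences \ref{eq1} and \ref{eq2} starting from $h^1(\mathbf C(TX^{\sigma}))=h^1(\mathbf C(TX))=1$. In that analysis, the branch $h^2(\mathscr O(TX^{\sigma}))\neq 0$ is closed precisely by Bombieri's result: it leads to $TX\cong K_X\otimes (\Omega^1)^{\sigma}$, whence $K_X^{\vee}\cong K_X^2\otimes K_X^{\sigma}\cong K_X^3$, contradicting that $K_X$ is non-torsion. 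Without the $K_X$-stability step, which you never mention, the ``rigidity'' you correctly flag as the main obstacle cannot be established.

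In Stage three, your argument for the reality of $F$ is circular. You claim that $\dim H^1(X,\mathbf C(l\rho))=1$ ``produces a $\Gamma$-invariant real structure, forcing $\rho(\Gamma)\subset \Aff(2,\mathbf R)$, hence $F\hookrightarrow \mathbf R$.'' But constructing the invariant real plane $R\subset \mathbf C^2$ (the paper's Lemma \ref{realss}, which belongs to the \emph{next} section) takes as input that $l\rho(\Gamma)$ is already conjugate into $\GL_2(\mathbf R)$---that is, exactly the reality of $F$: only then does $\mathbf C^2$ split into two isomorphic real $2$-dimensional subrepresentations, so that $H^1$ and the affine cocycle split accordingly. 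The paper's actual mechanism runs in the opposite direction: if $F$ were imaginary, then $A\otimes \mathbf R\cong H$, the Hamilton quaternions, so $l\rho$ is irreducible of quaternionic type on $\mathbf R^4$ and its centralizer in $M_4(\mathbf R)$ is again a copy of $H$; by functoriality, $H^1(\Gamma,\mathbf C^2_{l\rho})$ is then a nonzero module over this division algebra, which is impossible because its real dimension is $2$, while every nonzero $H$-module has real dimension divisible by $4$. So you have identified the correct input (Lemma \ref{onedimens}), but it must be used as a dimension-count obstruction against a quaternionic centralizer; the invariant real structure is a \emph{consequence} of the reality of $F$, not a route to it.
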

 The structure of the section is as follows:
the proof is split into four main technical statements, namely Propositions \ref{affine}, \ref{Bombieri},
\ref{wonderful} and \ref{conjugation}; each such Proposition is proved in a sequence of simple Lemmas.\\
Examples of $VII_0$ surfaces, {\it i.e.} Hopf and Inoue surfaces,
are described in \cite{1}, \cite{55}, and \cite{7}. We want to show that there are no other examples,
which amounts to proving that surfaces
with the following properties do not exist:
\begin{itemize}\label{surfaceVII}
 \item  $c_1(X)$ is torsion - in particular $c_1^2=0$ - and $c_2(X)=0$.
 \item $b_2=0$, $H^0(X,K_X^n)=0$ for every $n\in \mathbf Z\setminus \{0\}$.
 \item It has infinite fundamental group, moreover $\dim H^1(X,\mathbf C)=1$.
 \item It carries no holomorphic
foliations, i.e. $H^0(X,\mathscr L\otimes \Omega^1)=0$ for every line bundle $\mathscr L$.
\item The kernel of $\pi_1(X)\to \pi_1(X)/[\pi_1(X),\pi_1(X)]$ is infinitely generated (Bombieri \cite{55}).
\item Every finite \'etale cover of $X$ has all the previous properties.
\end{itemize}
We will denote $\Gamma:=\pi_1(X)$.\\
In this section we follow the strategy of \cite{1} rather closely.
\begin{proposition}\label{affine}
The tangent bundle $TX$ admits a unique flat, torsion-free connection.
\end{proposition}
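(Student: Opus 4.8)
The plan is to interpret "flat, torsion-free connection on $TX$" as exactly an affine structure on $X$, and to produce and rigidify it via deformation-theoretic cohomology vanishing. Recall that a torsion-free flat connection $\nabla$ on $TX$ is equivalent to an atlas of holomorphic charts whose transition maps lie in $\Aff(2,\mathbf C)$: flatness gives local flat frames, and the torsion-free condition forces these frames to integrate to affine coordinates. So the statement splits into two parts: existence of such a connection, and its uniqueness. I would phrase both as the vanishing of obstruction groups, following the strategy announced in the introduction, that "both the existence and uniqueness statements follow from the vanishing of certain cohomology groups."

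For existence, I would argue that the obstruction to deforming a local affine chart into a global flat torsion-free connection lives in $H^1(X, T X \otimes \Omega^1_X)$ or, more precisely, in a cohomology group built from the symmetric powers of $\Omega^1_X$ governing the curvature and torsion tensors. The key geometric inputs are the hypotheses recorded in the list defining these surfaces: $c_1(X)$ is torsion and $c_2(X)=0$, so the relevant Chern-class obstructions vanish numerically; and crucially $X$ carries no holomorphic foliation, $H^0(X,\mathscr L\otimes \Omega^1)=0$ for every line bundle $\mathscr L$, which is precisely what kills the would-be sections that could obstruct flatness or spoil torsion-freeness. I would combine these with $b_2=0$ and $H^0(X,K_X^n)=0$ to force the obstruction space to vanish, using Riemann--Roch or the Atiyah--Bott index computation to control the Euler characteristic $\chi(T X\otimes \Omega^1_X)$ and then Serre duality together with the no-foliation hypothesis to kill the individual $H^0$ and $H^2$ terms.

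For uniqueness, the difference of two flat torsion-free connections is a $T X$-valued holomorphic one-form that is symmetric (torsion-free) and flat (closed), i.e. a global holomorphic section of $\Sym^2(\Omega^1_X)\otimes T X$, equivalently (after contracting with the trivialized-up-to-torsion canonical bundle) a holomorphic section of a bundle of the form $\Omega^1_X\otimes \mathscr L$. The no-holomorphic-foliation hypothesis $H^0(X,\mathscr L\otimes \Omega^1)=0$ then immediately forces this difference tensor to vanish, giving uniqueness. This is the cleanest step and I expect it to reduce to a one-line invocation of the foliation hypothesis once the difference is correctly identified as a section of the right bundle.

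The main obstacle, I expect, is the existence half, specifically controlling $H^2$ (or the top obstruction group) rather than $H^0$. The hypotheses directly annihilate $H^0$-type spaces via the no-foliation condition, but bounding $H^2(X, T X\otimes\Omega^1_X)$ requires Serre duality $H^2(X,\mathcal E)\cong H^0(X,\mathcal E^\vee\otimes K_X)^\vee$ and then re-expressing the dual as another sheaf of the form $\mathscr L'\otimes \Omega^1_X$ to reapply the foliation hypothesis; making this translation precise, while keeping track of the torsion ambiguity in $c_1$ and ensuring the Euler characteristic forces the middle cohomology to vanish once the outer terms do, is the delicate point. I would handle this by first establishing the index/Euler-characteristic computation as a separate lemma, then isolating the vanishing of $H^0$ and $H^2$ as two further lemmas resting on the foliation hypothesis and Serre duality, exactly as the section's stated structure ("proved in a sequence of simple Lemmas") suggests.
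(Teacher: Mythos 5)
Your high-level strategy (identify obstruction spaces, use $\chi=0$ from Riemann--Roch, then kill $H^0$ and $H^2$ via Serre duality and the no-foliation hypothesis) is indeed the paper's strategy, but there is a genuine gap at exactly the step you declare to be the cleanest and expect to be ``a one-line invocation of the foliation hypothesis.'' The difference of two torsion-free connections is a section of $\Sym^2(\Omega^1)\otimes TX$, a bundle of rank $6$; the paper works with the full uniqueness obstruction $H^0(\End(TX)\otimes\Omega^1)$, of rank $8$. Neither bundle can be rewritten, by contracting with $K_X$ or any other line bundle, in the form $\mathscr L\otimes\Omega^1$: that bundle has rank $2$, and twisting by line bundles does not change rank. (Decomposing, one finds $\Sym^2\Omega^1\otimes TX\cong(\Sym^3\Omega^1\otimes K_X^{-1})\oplus\Omega^1$; the hypothesis $H^0(X,\mathscr L\otimes\Omega^1)=0$ kills the rank-$2$ summand but says nothing directly about the rank-$4$ one.) So your uniqueness argument has no proof, and the same defect propagates into your existence argument: by Serre duality and $K_X\otimes TX\cong\Omega^1$, the group $H^2(\End(TX)\otimes\Omega^1)$ is dual to $H^0(\End(TX)\otimes\Omega^1)$, which is precisely the space you cannot kill in one line.

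Closing this gap is the actual content of the paper's proof. To show $H^0(\Omega^1\otimes\End(TX))=0$, the paper views a section as a morphism $s:TX\to\End(TX)$ and extracts several rank-$\le 2$ consequences, each killed separately by the foliation hypothesis: $s$ must be injective (its kernel would define a foliation); the composite $\tr\circ s:TX\to\mathscr O_X$ must vanish (again its kernel would be a foliation); the map $K_X^{-1}=TX\wedge TX\to\End(TX)$ obtained from $s\wedge s$ followed by the commutator must vanish, because $H^0(K_X\otimes\End(TX))=0$ --- itself a separate lemma, proved by noting that the determinant of $s:TX\to TX\otimes K_X^n$ is a section of a nonzero power of $K_X$, hence zero, so a nonzero $s$ would have a rank-$1$ kernel, i.e.\ a foliation. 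Only after all this does one conclude that the image of $s$ is a rank-$2$ traceless commutative subalgebra of $\End(TX)$, which forces reducibility of the representation and hence a foliation on an \'etale cover, a contradiction. None of these ideas appear in your proposal, and without some version of them the vanishing of the rank-$\ge 4$ obstruction pieces --- and therefore both existence and uniqueness --- remains unproved.
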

\begin{proof}
 The following remark is used systematically in the proof:
 the triviality of Chern numbers implies that the Euler characteristic of any tensor bundle $\mathscr F$ 
 is trivial; hence
 the existence of a non-trivial element in $H^1(\mathscr F)$, implies the existence of
 a non-trivial holomorphic section of $\mathscr F$ or $K_X\otimes \mathscr F^{\vee}$.\\
 Let us consider the obstructions to the existence of an affine structure on $X$.
 \begin{itemize}
  \item The obstruction to the existence of a holomorphic connection on $TX$ is in\\ 
  $H^1(\End(TX)\otimes \Omega^1)$
  \item The obstruction to the flatness of the connection is in $H^0(K_X\otimes \End(TX))$
  \item The obstruction to the uniqueness of the connection is in $H^0(\End(TX)\otimes \Omega^1)$
 \end{itemize}
This is a standard fact which is directly translated from differtial geometry.\\ 
We shall prove that all these cohomology groups vanish.
\begin{lemma}\label{vanishing1}
 For every $n\in \mathbf Z$, $n\neq 0$, we have $H^0(K_X^n\otimes \End(TX))=0$.
\end{lemma}
\begin{proof}
If $s$ is a section, it defines a morphism $s:TX\to TX\otimes K_X^n$.
Its determinant is a section of $K_X^{4n}$, which must be zero.
Hence, if $s$ is non-trivial, its kernel is generically of rank $1$ and
defines a holomorphic foliation, which is absurd.
\end{proof}
\begin{lemma}
 $H^0(\Omega^1\otimes \End(TX))=0$
\end{lemma}
\begin{proof}
Let us assume it is non-zero, and hence there is a non-zero section. This tautologically
defines a non-zero morphism $s:TX\to \End(TX)$. 
It is injective, otherwise the kernel defines a foliation. 
Composing $s$ with the natural trace
map $\tr:\End(TX)\to \mathscr O_X$, we obtain a morphism $\text{tr}\circ s:TX\to \mathscr O_X$,
which must be trivial since otherwise its kernel would again define a foliation.
Next, consider the composition of $s\wedge s:K_X^{-1}=TX\wedge TX\to \End(TX)\wedge \End(TX)$ with 
the commutator $c:\End(TX)\wedge \End(TX)\to \End(TX)$. By Lemma \ref{vanishing1}, it must
be zero. It follows that the image of $s$ is a rank $2$, traceless subbundle, which generates a commutative sub-algebra of
$\End(TX)$. Such subalgebra is necessarily one-dimensional, and corresponding representation is not irreducible,
contradicting our assumption that no \'etale cover of $X$ carries non-trivial holomorphic foliations.
\end{proof}
At this point, we proved the vanishing of the last two obstructions mentioned above.
On the other hand, since $\chi(X)=0$, if $h^1(\End(TX)\otimes \Omega^1)\neq 0$
then by Riemann-Roch and Serre duality, $h^0(\End(TX)\otimes \Omega^1)$ is non-zero 
(we are using $K_X\otimes TX\xrightarrow{\sim} \Omega^1$ to identify $h^0$ and $h^2$)
which is impossible. 
\end{proof}
The connection just constructed defines an affine structure on $X$, meaning a local biholomorphism 
from the universal cover
\begin{equation}\label{developingmap}
 p_{\text{aff}}:\tilde{X}\to \mathbf C^2
\end{equation}
which is equivariant for a representation $\rho:\Gamma:=\pi_1(X)\to \Aff(2,\mathbf C)$,
with corresponding linearization $l\rho:\Gamma\to \GL_2(\mathbf C)$. 
We are going to use systematically the fact that $\rho(\Gamma)$-invariant objects on $\mathbf C^2$
define naturally similar objects on the compact surface $X$:
first we lift them under $p_{\text{aff}}$ to $\Gamma$-invariant objects on the universal covering
$V$, and then descend them to $X$.
The first example of implementation of this general principle is:
\begin{lemma}\label{Gnotcyclic}
 The action of $Im(l\rho)$ is irreducible. In particular, such group is not cyclic.
\end{lemma}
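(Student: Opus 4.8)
The plan is to argue by contradiction, exploiting the dictionary between invariant subspaces of the linear monodromy $l\rho$ and flat sub-bundles of $TX$, and then invoking the standing hypothesis that $X$ carries no holomorphic foliation. The starting observation is purely linear: since the representation acts on $\mathbf C^2$, the group $Im(l\rho)$ fails to act irreducibly precisely when it preserves a line $L\subset \mathbf C^2$. So it suffices to rule out the existence of such an invariant line.

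I would then translate an invariant line into geometry. By construction $l\rho$ is the monodromy of the flat, torsion-free \emph{holomorphic} connection on $TX$ produced in Proposition \ref{affine}; equivalently, pulling back to the universal cover $V=\tilde X$ and trivializing $TX$ via the flat structure identifies a fiber with $\mathbf C^2$ and the $\Gamma$-action with $l\rho$. An $l\rho$-invariant line $L$ therefore descends to a flat—and hence, since the connection is holomorphic, holomorphic—sub-line-bundle $\mathscr L\subset TX$. On a surface a holomorphic sub-line-bundle of the tangent bundle is automatically integrable, as involutivity is vacuous in rank one, so $\mathscr L$ defines a holomorphic foliation of $X$. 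Concretely, using the identification $TX\cong \Omega^1\otimes K_X^{-1}$, the inclusion $\mathscr L\hookrightarrow TX$ is a non-zero section of $\Omega^1\otimes K_X^{-1}\otimes \mathscr L^{-1}$, contradicting the hypothesis $H^0(X,\mathscr N\otimes \Omega^1)=0$ for every line bundle $\mathscr N$. This contradiction establishes irreducibility.

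Finally, the \emph{in particular} clause is immediate: a cyclic subgroup of $\GL_2(\mathbf C)$ is generated by a single matrix $A$, which over $\mathbf C$ admits an eigenvector spanning a line invariant under $A$, hence under the whole group; thus any cyclic group acts reducibly, and irreducibility forces $Im(l\rho)$ to be non-cyclic. The main obstacle is making the first translation airtight—one must check that the flat sub-bundle arising from $L$ is genuinely holomorphic, which relies on the connection of Proposition \ref{affine} being a holomorphic connection so that its flat sub-bundles inherit the holomorphic structure, and that such a sub-line-bundle indeed qualifies as a ``holomorphic foliation'' in the precise sense used in the list of properties above. Everything else in the argument is formal.
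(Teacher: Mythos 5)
Your proof is correct and follows essentially the same route as the paper: reducibility yields an invariant line, which is translated into a holomorphic foliation on $X$, contradicting the standing no-foliation hypothesis. The only cosmetic difference is that the paper descends the foliation via the character-equivariant $1$-form $p_{\text{aff}}^*(dx)$ annihilating the invariant direction, whereas you descend the corresponding flat holomorphic sub-line-bundle of $TX$ directly (and make the contradiction explicit as a section of $\mathscr N\otimes \Omega^1$) --- dual formulations of the same construction.
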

\begin{proof}
 If it were then there would be a direction, say $(x=0)\subset \mathbf C^2$, which is invariant under
 $Im(l\rho)$. Consider the 1-form $dx$. 
 It is clearly invariant under translations, and by definition the action of $Im(l\rho)$ on $dx$
 is defined by a character $\chi$.
 Then the pull back $p_{\text{aff}}^*(dx)$ defines a $\Gamma$-invariant holomorphic foliation on $V$,
 which descends to $X$ - contradiction.
\end{proof}
We will extract informations by studying the orbit of $l\rho$ under the natural action of $\gal(\mathbf {C/Q})$,
by which we mean the full automorphism group of $\mathbf C$ as a field.
Namely, if a bundle $E$ is defined by a representation $\eta:\Gamma\to \GL_n(\mathbf C)$,
we will denote by $E^{\sigma}$ the one defined by $\sigma \circ \eta$, for $\sigma \in \gal(\mathbf {C/Q})$.
For $\mathscr F$ a local system,
we have exact sequences in the analytic topology:
 \begin{equation}\label{eq1}
  0\to \mathbf C(\mathscr F)\to \mathscr {O(F)}\to d\mathscr {O(F)}\to 0
 \end{equation}
 
 \begin{equation}\label{eq2}
  0\to d\mathscr {O(F)}\to \Omega^1(\mathscr F)\to K_X\otimes \mathscr F\to 0
 \end{equation}
Where $\mathbf C(\mathscr F)$ is the sheaf of locally constant sections, $\mathscr{O(F)}$ the sheaf of
holomorphic sections of the coherent sheaf constructed from $\mathscr F$, and $d$ denotes the usual holomorphic differential.
\begin{proposition}\cite[Bombieri]{55}\label{Bombieri}
$ K_X^{\sigma}\xrightarrow{\sim} K_X $ for every $\sigma$.
\end{proposition}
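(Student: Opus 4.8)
The plan is to use that, thanks to the flat connection built in Proposition \ref{affine}, $K_X$ is itself a \emph{flat} line bundle: its monodromy is the character $\kappa:=(\det l\rho)^{-1}\colon\Gamma\to\mathbf C^*$, and by the conventions fixed above $K_X^\sigma$ is the flat bundle with monodromy $\sigma\circ\kappa$. Since two flat line bundles are isomorphic exactly when their monodromy characters coincide, proving $K_X^\sigma\xrightarrow{\sim}K_X$ amounts to showing $\sigma\circ\kappa=\kappa$ for every $\sigma\in\gal(\mathbf{C/Q})$. I would obtain this by locating $K_X$ inside a \emph{finite set of flat line bundles carved out by a Galois-stable condition}, namely non-vanishing of cohomology with locally constant coefficients.

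The key bridge is that, because $X$ has no holomorphic foliation, $H^0(X,\Omega^1\otimes\mathscr L)=0$ for every line bundle $\mathscr L$; hence for a flat line bundle $\mathbf C(\psi)$ any global holomorphic section $s$ of $\mathscr O(\psi)$ satisfies $ds\in H^0(X,\Omega^1(\psi))=0$, so $s$ is flat, and the inclusion $\mathbf C(\psi)\hookrightarrow\mathscr O(\psi)$ induces $H^0(X,\mathscr O(\psi))=H^0(X,\mathbf C(\psi))$, which is nonzero iff $\psi=1$. Combining this with Serre duality $H^2(X,\mathscr O(\psi))^\vee\cong H^0(X,\mathscr O(K_X\otimes\mathbf C(\psi^{-1})))$ and with the vanishing of the holomorphic Euler characteristic of $\mathscr O(\psi)$ (triviality of Chern numbers), I get, writing $\mathbf 1[\cdot]$ for the indicator of a condition,
\[
h^0(\mathscr O(\psi))=\mathbf 1[\psi=1],\qquad h^2(\mathscr O(\psi))=\mathbf 1[\psi=\kappa],\qquad h^1(\mathscr O(\psi))=\mathbf 1[\psi=1]+\mathbf 1[\psi=\kappa].
\]
Thus the only flat line bundles with non-vanishing coherent cohomology are $\mathscr O_X$ and $K_X$, and in particular $\kappa\neq1$ follows from $h^1(\mathscr O_X)=1$.

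Next I would feed this into the sequences \eqref{eq1}--\eqref{eq2}. Taking the long exact sequence of \eqref{eq1} for $\mathbf C(\psi)$, using $H^0(d\mathscr O(\psi))\subseteq H^0(\Omega^1(\psi))=0$ and the bridge above, yields an injection $H^1(X,\mathbf C(\psi))\hookrightarrow H^1(X,\mathscr O(\psi))$. Hence the set $S:=\{\psi: H^1(X,\mathbf C(\psi))\neq0\}$ is contained in $\{1,\kappa\}$, and it contains $1$ since $\dim H^1(X,\mathbf C)=1$. Because the Betti cohomology of a local system is unchanged, up to dimension, under the semilinear base change induced by $\sigma\in\gal(\mathbf{C/Q})$, the set $S$ is Galois-stable; as $\sigma$ fixes the trivial character and carries non-trivial characters to non-trivial ones, the moment I know $\kappa\in S$ I conclude $\sigma\circ\kappa=\kappa$, i.e. $K_X^\sigma\xrightarrow{\sim}K_X$.

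The main obstacle is therefore precisely the non-vanishing $H^1(X,\mathbf C(\kappa))\neq0$, equivalently $\kappa\in S$. Via Poincar\'e duality $\dim H^1(X,\mathbf C(\kappa))=\dim H^3(X,\mathbf C(\kappa^{-1}))$, and tracing \eqref{eq1}--\eqref{eq2} for $\psi=\kappa^{-1}$ (all the auxiliary $H^0$, $H^2$ of twisted $\Omega^1$ vanishing by the absence of twisted holomorphic $1$-forms) this reduces, after Serre duality, to the vanishing of the differential $d\colon H^1(X,\Omega^1(\kappa))\to H^1(X,K_X\otimes\mathbf C(\kappa))$ on the $E_1$-page of the Hodge--de Rham spectral sequence of $\mathbf C(\kappa)$; equivalently, the unique class in $H^1(X,\mathscr O(\kappa))$ must lift to a genuine flat cohomology class. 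I would establish this degeneration using the special numerology of $VII_0$ surfaces recorded above ($\dim H^1(X,\mathbf C)=1$, $b_2=0$, triviality of Chern numbers, and $H^0(\Omega^1\otimes\mathscr L)=0$), and this is exactly where the geometry of these surfaces, rather than formal homological algebra, is needed; the surrounding Galois-stability argument is then purely formal.
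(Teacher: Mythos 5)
Your surrounding framework is sound, and it is in fact the same strategy as the paper's: the set $S$ of characters $\psi$ with $H^1(X,\mathbf C(\psi))\neq 0$ is Galois-stable, it is contained in $\{1,\kappa\}$ because only $\mathscr O_X$ and $K_X$ among flat line bundles have non-vanishing coherent cohomology, and $\kappa\neq 1$; so everything hinges on the single non-formal input $\kappa\in S$, i.e.\ $H^1(X,\mathbf C(\kappa))\neq 0$. That is exactly where your proof stops. You reduce this non-vanishing (through a Poincar\'e-duality detour that is not needed, and with a slip between $\kappa$ and $\kappa^{-1}$ in the differential you write down) to a Hodge--de Rham degeneration statement, which you then only promise to ``establish using the special numerology.'' This deferred claim is a genuine gap, and it sits at the point carrying the real content of the proposition: it is precisely the paper's Lemma \ref{van3} together with its unnumbered lemma $h^1(\mathbf C(K_X))=1$.

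The gap is fillable with tools you have already assembled, and with no spectral sequence or duality detour. Apply sequence \ref{eq2} to $\mathscr F=K_X$: its third term is $K_X\otimes K_X$, whose character is $\kappa^2\notin\{1,\kappa\}$ (since $\kappa\neq 1$ and $K_X$ is not torsion), so by your own indicator formulas $h^i(\mathscr O(\kappa^2))=0$ for all $i$, whence $h^i(d\mathscr O(\kappa))=h^i(\Omega^1(\kappa))$ for all $i$. Now $h^0(\Omega^1\otimes K_X)=0$ by absence of foliations, $h^2(\Omega^1\otimes K_X)=h^0\bigl(K_X\otimes TX\otimes K_X^{-1}\bigr)=h^0(\Omega^1\otimes K_X^{-1})=0$ by Serre duality and absence of foliations again, and $\chi(\Omega^1\otimes K_X)=0$ by Riemann--Roch with trivial Chern numbers; hence $h^1(d\mathscr O(\kappa))=0$ as well. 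Feeding this into the long exact sequence of \ref{eq1} for $\psi=\kappa$, where you already proved $h^0(d\mathscr O(\kappa))=0$, gives $h^1(\mathbf C(\kappa))=h^1(\mathscr O(\kappa))=h^1(K_X)=h^1(\mathscr O_X)=1\neq 0$, which is what you needed. With this three-line computation in place of the deferred degeneration claim, your argument is complete and essentially identical to the paper's; in particular, your suggestion that some further ``geometry of these surfaces, rather than formal homological algebra'' is still required at that point is misplaced --- the same no-foliation, Serre duality and Riemann--Roch toolkit you used everywhere else already suffices.
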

\begin{proof}
Let $\mathscr F$ be a local system.
\begin{lemma}\label{van1}
 If $\mathscr F\neq \mathscr {O}_X, K_X$ is of rank $1$, then $h^i(X,\mathscr {O(F)})=0$ for every $i$.
\end{lemma}
\begin{proof}
This is the absence of compact curves, plus the triviality of Chern classes and Riemann-Roch.
\end{proof}
\begin{lemma}\label{van2}
 If $\mathscr F$ has rank $1$, then $h^0(d\mathscr{O(F)})=0$.
\end{lemma}
\begin{proof}
This follows by taking global sections in the sequence \ref{eq2}.
\end{proof}
\begin{lemma}\label{van3}
If $\mathscr F\neq \mathscr O_X, K_X^{-1}$ has rank $1$ then $h^i(d\mathscr {O(F)})=0$ for every $i$. 
\end{lemma}
\begin{proof}
We can apply sequence \ref{eq2}, together with Lemma \ref{van1} for $K_X\otimes \mathscr F$, to
deduce $h^i(d\mathscr {O(F)})=h^i(\Omega^1(\mathscr F))$. If, for some $i$ we have $h^i(\Omega^1(\mathscr F))\neq 0$,
then by Riemann-Roch it holds for $i=0$ or $i=2$. But then we get a foliation, which is absurd.
\end{proof}
\begin{lemma}
 $h^1(\mathbf C(K_X))=1$
\end{lemma}
\begin{proof}
By Lemma \ref{van3}, we know $h^i(d\mathscr {O(F)})=0$ for all $i$. By the sequence \ref{eq1}
$h^1(\mathbf C(K_X))=h^1(\mathscr O(K_X))=1$.
\end{proof}
The above lemmas now merge to conclude the proof: if $K_X^{\sigma}\neq K_X$, by Lemma \ref{van1}
$h^i(\mathscr O(K_X^{\sigma}))=0$. Then by the sequence \ref{eq1} we get 
$$h^0(d\mathscr O(K_X^{\sigma}))=h^1(\mathbf C(K_X^{\sigma}))=h^1(\mathbf C(K_X))=1$$
which is absurd by lemma \ref{van2}.

\end{proof}
Now we turn our attention to the Galois action on $TX$. As one imagines,
\begin{proposition}\label{wonderful}
 $TX^{\sigma}\xrightarrow{\sim} TX$. In particular there exists a representation 
 $\xi:\gal(\mathbf{C/Q})\to \GL_2(\mathbf C)$ such that
 $l\rho^{\sigma}=\xi_{\sigma}\cdot l\rho \cdot \xi_{\sigma}^{-1}$.
\end{proposition}
\begin{proof}
 Let us start by observing that, for any local system $\mathscr F$,
 the existence of homotheties gives $h^0(\mathscr F\otimes \mathscr F^{\vee})=h^0(\End(\mathscr F))\geq 1$. 
 We have, again,
 a series of lemmas.
 \begin{lemma}\label{onedimens}
  $h^1(\mathbf C(TX))=h^0(d\mathscr O(TX))= 1$
 \end{lemma}
\begin{proof}
The first equality follows from sequence \ref{eq1} and the vanishing of $h^i(\mathscr O(TX))$ due
to the absence of foliations and Riemann-Roch. Observe that the absence of foliations implies that $TX$ is simple -
in particular any endomorphism of $TX$ must be a multiple of the identity,
$h^0(\End(TX))=1$ - and since $h^0(K_X\otimes TX)=0$,
we conclude via sequence \ref{eq2}.
\end{proof}
Therefore: $h^1(\mathbf C(TX^{\sigma}))= 1$ for every $\sigma\in \gal(\mathbf{C/Q})$.
By the sequence \ref{eq1}, at least one of the following happens:
\begin{itemize}
 \item $h^0(d\mathscr {O}(TX^{\sigma}))\geq 1$
 \item $h^1(\mathscr {O}(TX^{\sigma}))\geq 1$
\end{itemize}
The first option implies that, by the sequence \ref{eq2}, $h^0(\Omega^1\otimes TX^{\sigma})\geq 1$
so then there exists a morphism $TX\to TX^{\sigma}$ which must be an isomorphism.\\
The second option, along with Riemann-Roch, leads to a further alternative:
\begin{itemize}
 \item $h^0(\mathscr O(TX^{\sigma}))\geq 1$
 \item $h^2(\mathscr O(TX^{\sigma}))=h^0(K_X\otimes \mathscr O(TX^{\sigma})^{\vee})\geq 1$
\end{itemize}
In the first case, since $h^0(\mathbf C(TX^{\sigma}))=0$, the sequence \ref{eq1} implies
$h^0(d\mathscr O(TX^{\sigma}))\geq 1$, while the sequence \ref{eq2} gives an isomorphism $TX\to TX^{\sigma}$.\\
In the second case, by the sequence \ref{eq1} and the identity $(TX^{\sigma})^{\vee}=(\Omega^1)^{\sigma}$,
we have one more alternative:
\begin{itemize}
 \item $h^0(\mathbf C(K_X\otimes (\Omega^1)^{\sigma}))\geq 1$
 \item $h^0(d\mathscr O(K_X\otimes (\Omega^1)^{\sigma}))\geq 1$
\end{itemize}
In the first case Proposition \ref{Bombieri} gives $h^0(\mathbf C(K_X\otimes (\Omega^1)^{\sigma}))=
h^0(\mathbf C(K_X\otimes \Omega^1)^{\sigma})=h^0(\mathbf C(K_X\otimes \Omega^1))\geq 1$
and hence a foliation.\\
In the second case, the sequence \ref{eq2} gives $h^0(K_X\otimes \Omega^1\otimes (\Omega^1)^{\sigma})\geq 1$,
which defines an isomorphism $TX\xrightarrow{\sim} K_X\otimes (\Omega^1)^{\sigma}$. Taking determinants, however,
gives $K_X^{\vee}\xrightarrow{\sim} K_X^2\otimes K_X^{\sigma}\xrightarrow{\sim} K_X^3$ - the last
isomorphism being Bombieri's \ref{Bombieri} - which is
again absurd since $K_X$ is not torsion.\\
This proves that $TX\xrightarrow{\sim} TX^{\sigma}$ for every $\sigma$.
Since $TX$ is built from $\rho$, the representations $l\rho^{\sigma}$ are isomorphic to each other,
and the existence of $\xi$ follows.
\end{proof}
We wish to employ Proposition \ref{wonderful} to derive concrete arithmetic restrictions on
$l\rho$. Quite generally we have:
\begin{proposition}\label{conjugation}
 Let $\eta: \Gamma\to \GL_2(\mathbf C)$ be a representation, such that $\eta^{\sigma}$ is conjugated
 to $\eta$ for every $\sigma\in \gal(\mathbf {C/Q})$. Then the image of $\eta$ is either conjugated
 to a subgroup of $\GL_2(\mathbf Q)$, or to a subgroup of the invertible elements 
 in a quaternion algebra $H_2\subset M_2(\mathbf Q(\sqrt d))$ over $\mathbf Q$.
\end{proposition}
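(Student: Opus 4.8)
The plan is to convert the Galois-stability hypothesis into a rationality statement about the \emph{character} of $\eta$, and then to recognize the conclusion as the dichotomy between the two classes of four-dimensional central simple algebras over $\mathbf Q$. First I would observe that, since $\eta^\sigma$ is conjugate to $\eta$, the two representations have the same trace, so for every $\gamma\in\Gamma$ one has $\sigma(\tr\eta(\gamma))=\tr\eta^\sigma(\gamma)=\tr\eta(\gamma)$ for all $\sigma\in\gal(\mathbf{C/Q})$. As the fixed field of the full automorphism group $\gal(\mathbf{C/Q})$ is $\mathbf Q$, this forces $\tr\eta(\gamma)\in\mathbf Q$ for every $\gamma$. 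Thus the whole content of the proposition is the following: a two-dimensional complex representation with rational character is defined, up to conjugacy, over either $\mathbf Q$ or a quaternion algebra over $\mathbf Q$.

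Next I would introduce the $\mathbf Q$-subalgebra $A\subset M_2(\mathbf C)$ generated by the image $\eta(\Gamma)$, and treat the decisive case in which $\eta$ is irreducible, which is the one needed in the sequel by Lemma~\ref{Gnotcyclic}. By Burnside's theorem the matrices $\eta(\gamma)$ span $M_2(\mathbf C)$ over $\mathbf C$, hence $A\otimes_{\mathbf Q}\mathbf C=M_2(\mathbf C)$. Every element of $A$ is a $\mathbf Q$-linear combination of matrices $\eta(g)$, $g\in\Gamma$, and therefore has rational trace by the previous paragraph. Choosing $g_1,\dots,g_4$ with $\eta(g_1),\dots,\eta(g_4)$ a $\mathbf C$-basis of $M_2(\mathbf C)$, nondegeneracy of the trace form shows that the map $a\mapsto(\tr(a\,\eta(g_i)))_{i=1}^4$ is injective on $M_2(\mathbf C)$ and sends $A$ into $\mathbf Q^4$; hence $\dim_{\mathbf Q}A=4$. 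Since $A\otimes_{\mathbf Q}\mathbf C\cong M_2(\mathbf C)$ is central simple over $\mathbf C$, the algebra $A$ is a four-dimensional central simple algebra over $\mathbf Q$.

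It then remains to run the structure theory. By Artin--Wedderburn a four-dimensional central simple $\mathbf Q$-algebra is either $M_2(\mathbf Q)$ or a quaternion division algebra $H$ over $\mathbf Q$. In the first case the inclusion $A\subset M_2(\mathbf C)$ and the standard inclusion $M_2(\mathbf Q)\subset M_2(\mathbf C)$ induce two $\mathbf C$-algebra isomorphisms of $A\otimes_{\mathbf Q}\mathbf C$ onto $M_2(\mathbf C)$, so by Skolem--Noether they differ by an inner automorphism; conjugating by the corresponding $g\in\GL_2(\mathbf C)$ carries $\eta(\Gamma)$ into $\GL_2(\mathbf Q)$. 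In the second case I would choose a maximal subfield $F=\mathbf Q(\sqrt d)\subset H$, which splits $H$, giving an embedding $H\hookrightarrow H\otimes_{\mathbf Q}F\cong M_2(F)\subset M_2(\mathbf C)$ once $F$ is embedded in $\mathbf C$; another application of Skolem--Noether conjugates $\eta(\Gamma)$ into the units of the image $H_2\subset M_2(\mathbf Q(\sqrt d))$. The reducible case does not occur in our application, and in general is easier, handled directly through the one-dimensional sub- and quotient characters, whose values are algebraic of degree at most two over $\mathbf Q$, by an elementary flag argument that conjugates $\eta$ into $\GL_2(\mathbf Q)$.

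The main obstacle is the middle step: showing that the $\mathbf Q$-algebra $A$ is \emph{exactly} four-dimensional and central over $\mathbf Q$, rather than central over some larger number field. This is precisely where the rationality of all traces, combined with nondegeneracy of the trace form, is essential; once $A$ is known to be a central simple $\mathbf Q$-algebra of degree two, the Brauer-group dichotomy and the Skolem--Noether descent are routine.
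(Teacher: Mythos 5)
Your proof is correct and takes essentially the same route as the paper's: Galois invariance forces rational traces (and determinants), the $\mathbf Q$-linear span $A$ of the image is then a four-dimensional central simple $\mathbf Q$-algebra by irreducibility (which, as you note, comes from Lemma \ref{Gnotcyclic}), and the Wedderburn dichotomy yields $M_2(\mathbf Q)$ or a quaternion algebra inside $M_2(\mathbf Q(\sqrt d))$. The only difference is one of detail: you make explicit, via Burnside, nondegeneracy of the trace form, and Skolem--Noether, what the paper compresses into ``by standard argument.''
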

\begin{proof}
Since the representation is irreducible and 
$tr(g), det(g)\in \mathbf Q$ we obtain by standard argument that
the $\mathbf Q$-linear span of the image of
$\eta$ is a simple algebra $A$ over $\mathbf Q$.
We also have $A^{\sigma} = A$ for every $\sigma$. It implies that
$\sigma$ acts as either identity, or as a nontrivial involution
on $A$. In the first case $A= M(2,\mathbf Q)$.
In the second case it is a quaternion subalgebra
of $M(2,\mathbf Q (\sqrt d)$ with $d\in \mathbf Z$.
\end{proof}
We are going to distinguish two main cases:
\begin{itemize}
 \item $A\otimes \mathbf R= M(2,\mathbf R)$ when $d >0$.
 \item $A\otimes \mathbf R= H$
where $H$ is a standard algebra of quaternions over $\mathbf R$
when $d < 0$.
\end{itemize}
We are going to show now that the second case can not occur.
First we need a remark:

\begin{remark}\label{nonlinear}
The affine structure is not linear, since otherwise $X$ carries a holomorphic
foliation coming from the Euler vector field commuting with linear action of $\Gamma$.
In particular $\rho(\Gamma)$ is not contained in any subgroup conjugated to $\GL_2(\mathbf C)$
inside the affine group $A(2,\mathbf C)$.
\end{remark}
\begin{lemma}
$A\otimes \mathbf R\neq H$.
\end{lemma}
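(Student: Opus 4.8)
The plan is to rule out the definite case by a parity argument on the single cohomology group that the introduction singles out as decisive. Write $\mathbf C^2$ for the $\Gamma$-module on which $\Gamma$ acts through $l\rho$; by Lemma \ref{onedimens} this is the local system $\mathbf C(TX)$, so $\dim_{\mathbf C}H^1(\Gamma,\mathbf C^2)=h^1(\mathbf C(TX))=1$, an \emph{odd} number. I will show that the assumption $A\otimes\mathbf R=H$ equips this one-dimensional space with a quaternionic structure, forcing its complex dimension to be even, which is the desired contradiction.

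First I record the linear-algebra input. Since $l\rho$ is irreducible (Lemma \ref{Gnotcyclic}), Schur's lemma shows that the space of anti-linear $\Gamma$-equivariant endomorphisms $J$ of $\mathbf C^2$ is at most one-dimensional over $\mathbf C$, and that any such $J$ satisfies $J^2=\lambda\cdot\mathrm{id}$ with $\lambda\in\mathbf R$ (the composite $J^2$ is $\mathbf C$-linear and $\Gamma$-equivariant). Such a $J$ exists because $\overline{l\rho}\cong l\rho$: this self-conjugacy is the instance $\sigma=(\text{complex conjugation})$ of Proposition \ref{wonderful}, and it yields a $\Gamma$-equivariant anti-linear automorphism $J$ of $\mathbf C^2$. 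The sign of $\lambda$ is the Frobenius--Schur type, and it is read off from the commutant $A\otimes\mathbf R$ of $l\rho(\Gamma)$ inside $M_2(\mathbf C)$: when $A\otimes\mathbf R=M_2(\mathbf R)$ one may take $J$ to be ordinary complex conjugation, with $J^2=+\,\mathrm{id}$, whereas when $A\otimes\mathbf R=H$ the anti-linear maps commuting with $H\subset M_2(\mathbf C)$ are exactly the quaternionic structures, with $J^2<0$. Thus in the case at hand I obtain, after rescaling $J$ by a positive real, a $\Gamma$-equivariant anti-linear $J$ on $\mathbf C^2$ with $J^2=-\,\mathrm{id}$.

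Now I transport $J$ to cohomology. As $J$ is real-linear and commutes with the $\Gamma$-action, post-composition $f\mapsto J\circ f$ on group cochains commutes with the coboundary, hence descends to an anti-linear map $J_\ast$ on $H^1(\Gamma,\mathbf C^2)$ satisfying $J_\ast^2=-\,\mathrm{id}$. Together, multiplication by $i$ and $J_\ast$ obey $i^2=-1$, $J_\ast^2=-1$, $iJ_\ast=-J_\ast i$, so they generate an action of $H$ on the underlying real vector space; therefore $\dim_{\mathbf R}H^1(\Gamma,\mathbf C^2)$ is divisible by $4$ and $\dim_{\mathbf C}H^1(\Gamma,\mathbf C^2)$ is even. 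This contradicts $\dim_{\mathbf C}H^1(\Gamma,\mathbf C^2)=1$, and so $A\otimes\mathbf R\neq H$.

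The only genuinely delicate step is the determination of the sign of $\lambda$, i.e.\ that the definite quaternion case is precisely the one carrying an invariant quaternionic rather than real structure. I would settle this by computing the anti-linear commutant of $A\otimes\mathbf R$ acting on $\mathbf C^2$ case by case: those commuting with $M_2(\mathbf R)$ are positive multiples of conjugation ($J^2>0$), while those commuting with the standard copy of $H$ in $M_2(\mathbf C)$ are the quaternionic structures ($J^2<0$). Everything else---the existence of $J$ from self-conjugacy, its descent to $H^1$, and the parity conclusion---is formal, and the contradiction rests entirely on the oddness of $h^1(\mathbf C(TX))=1$.
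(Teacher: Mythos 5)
Your proof is correct and is essentially the paper's own argument: the paper likewise identifies the centralizer of $l\rho(\Gamma)$ in $M(4,\mathbf R)$ with a second copy $H_r$ of the quaternions (your $J$ together with multiplication by $i$ generates exactly this algebra), lets it act on $H^1(\Gamma,\mathbf C^2)\cong \mathbf R^2$, and concludes from the fact that $H$ admits no nontrivial representation into $M(2,\mathbf R)$ --- indeed the paper's remark immediately after the proof states your parity formulation verbatim, namely that $H^1$ being an $H$-module forces its rank over $\mathbf C$ to be even. The only cosmetic difference is that the paper certifies the nonvanishing of $H^1(\Gamma,\mathbf C^2)$ by observing that the affine class $\omega$ is nonzero (Remark \ref{nonlinear}), whereas you read both the nonvanishing and the exact dimension off Lemma \ref{onedimens} via the identification $H^1(\Gamma,\mathbf C^2_{l\rho})\cong H^1(X,\mathbf C(TX))$, an identification the paper itself uses elsewhere.
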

\begin{proof}
The action of $\rho(\Gamma)$ in $\mathbf C^2= \mathbf R^4$
is not linearzable by the previous remark, and hence the cohomology
element $\omega\in H^1(\pi_1, \mathbf C^2)=\mathbf C=\mathbf R^2= H^1(\pi_1, \mathbf R^4)$
defining $\rho$
is nonzero.
The centralizer of $\rho(\Gamma)$ in $M(4,\mathbf R)$
acts naturally and nontrivially on $H^1(\pi_1, \mathbf C^2)=\mathbf C=\mathbf R^2= H^1(\pi_1,\mathbf R^4)$,
viewed as real vector space.
Note that if $A\otimes \mathbf R = H$ then the representation
$l\rho$ remains irreduicble on $\mathbf R^4$ and hence is
quaternionic, by general theory of real representations.
The latter means that
the centralizer of $l\rho(\pi_1)$ in $M(4,\mathbf R)$ is also
isomoprhic to $H\subset M(4,\mathbf R)$, but with a different embedding of $H$, which denote by $H_r\subset
M(4,\mathbf R)$.
The algebra $H_r$
has to act linearly over $\mathbf R$ and nontrivilly on 
$H^1(\pi_1, \mathbf C^2)=\mathbf C=\mathbf R^2= H^1(\pi_1,\mathbf R^4)$
but $H_r$ has no nontrivial representations into $M(2,\mathbf R)$.
It provides a desired contradiction.
\end{proof}
Remark that the same argument works for any group $\Gamma$
with a representation $\rho$ in $\mathbf C^n$ and the property
that $\rho$ remains irreducible in $\mathbf R^{2n}= \mathbf C^n$
over real numbers. Then $H^1(\Gamma, \mathbf C^n)= H^1(\Gamma, \mathbf R^{2n})$ is $H$-module
and hence $H^1(\Gamma, \mathbf C^n)$ has to have even rank over $\mathbf C$.
Similar situation occurs for other pairs of fields $F\subset L$ instead of $\mathbf R,\mathbf C$.
This conludes the proof of Proposition
\ref{bogomolovvv}.

\section{Geometry of affine structures}\label{five}
In order to reach a contradiction using our affine structure, it will be important to study in detail the
geometry that governs it. This is the aim of the present section.
A substantial part of the presentation is inspired by
 \cite[chapter 3]{9}.\\
Recall that the affine structure on $X$ is defined
by a morphism $V:=\tilde{X}\to \mathbf C^2$, equivariant with respect
to a representation $\rho: \Gamma\to \Aff(2,\mathbf C)$.
This representation is uniquely defined by its linearization $l\rho$, whose
image we denote by $G:=Im(l\rho)$,
along with a cocycle $\omega\in H^1(\Gamma,\mathbf C^2_{\l\rho})$.
We already know the structure of $l\rho$,
namely its image is contained in $\GL_2(\mathbf R)<\GL_2(\mathbf C)<\GL_4(\mathbf R)$,
but something more can be said about $\rho$:
\begin{lemma}\label{realss}
 There exists a real $2$-dimensional subspace $R\subset \mathbf C^2$, invariant
 under $\rho(\Gamma)$.
\end{lemma}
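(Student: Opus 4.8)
The plan is to combine the two pieces of structure already at our disposal. On the one hand, at this point $G=Im(l\rho)$ has been conjugated into $\GL_2(\mathbf R)<\GL_2(\mathbf C)$, so every $A_\gamma:=l\rho(\gamma)$ acts $\mathbf C$-linearly on $\mathbf C^2$ while preserving the standard real form $\mathbf R^2\subset\mathbf C^2$. On the other hand, $\rho$ is completely determined by $l\rho$ together with its translation cocycle $\omega\in H^1(\Gamma,\mathbf C^2_{l\rho})$, and we control this space completely. So I would first set up a dictionary turning the existence of an invariant real plane into a cohomological statement, and then use the one-dimensionality of $H^1$ to solve it.

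First I would record the family of candidate planes. Since each $A_\gamma$ is real, for every $\lambda\in\mathbf C^*$ the real $2$-plane $\lambda\mathbf R^2$ is $G$-invariant, and $\mathbf C^2=\lambda\mathbf R^2\oplus i\lambda\mathbf R^2$ is a decomposition into $G$-submodules. Representing $\rho(\gamma)$ as $x\mapsto A_\gamma x+v_\gamma$, a translate $\lambda\mathbf R^2+p$ is $\rho(\Gamma)$-invariant exactly when $v_\gamma+(A_\gamma-I)p\in\lambda\mathbf R^2$ for all $\gamma$; the term $(A_\gamma-I)p$ is a coboundary, so such a $p$ exists iff $\omega$ lies in the image of the inclusion-induced map $H^1(\Gamma,\lambda\mathbf R^2)\to H^1(\Gamma,\mathbf C^2_{l\rho})$, equivalently iff the projection of $\omega$ onto the complementary summand $H^1(\Gamma,i\lambda\mathbf R^2)$ vanishes. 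Thus the lemma reduces to producing a single $\lambda$ with $\omega\in Im\,H^1(\Gamma,\lambda\mathbf R^2)$.

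The crux is to find this $\lambda$, and this is where the dimension count is indispensable. Because $\mathbf C^2_{l\rho}=\mathbf R^2_{l\rho}\otimes_{\mathbf R}\mathbf C$ as $\Gamma$-modules, we get $H^1(\Gamma,\mathbf C^2_{l\rho})=H^1(\Gamma,\mathbf R^2_{l\rho})\otimes_{\mathbf R}\mathbf C$; and by Lemma \ref{onedimens}, under the standard identification of flat-bundle cohomology with group cohomology, this space has complex dimension $h^1(\mathbf C(TX))=1$. Hence the real form $H^1(\Gamma,\mathbf R^2_{l\rho})$ is a single real line, spanned by some class $\eta$, so that $H^1(\Gamma,\mathbf C^2_{l\rho})=\mathbf C\,\eta$ and $\omega=c\,\eta$ for a unique $c\in\mathbf C$, with $c\neq 0$ since $\omega\neq 0$ by Remark \ref{nonlinear}. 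Taking $\lambda=c$, the image of $H^1(\Gamma,c\mathbf R^2)$ inside $\mathbf C\,\eta$ is precisely $c\mathbf R\cdot\eta$, which indeed contains $\omega=c\,\eta$. The geometric content is that the cocycle carries a single real direction, and one-dimensionality is exactly what lets us match it with one invariant real plane.

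Finally I would assemble the pieces: choosing a representative of $\omega$ valued in $c\mathbf R^2$ and the associated $p$ from the coboundary, the affine plane $R:=c\mathbf R^2+p$ is $\rho(\Gamma)$-invariant, which proves the lemma; recentering at $p$ and rescaling by $c^{-1}$ then identifies $R$ with $\mathbf R^2$ and places $\rho(\Gamma)$ inside $\Aff(2,\mathbf R)$, as the introduction anticipates. The only genuine obstacle is the alignment carried out in the third paragraph — everything else is the routine dictionary between invariant affine subspaces and cocycles modulo coboundaries — and it is precisely there that the special $VII_0$ input $\dim_{\mathbf C}H^1(\Gamma,\mathbf C^2_{l\rho})=1$ enters in an essential way.
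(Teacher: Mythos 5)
Your proposal is correct and takes essentially the same route as the paper: both parametrize the $G$-invariant real $2$-planes as complex-scalar rotations of the standard real form (your $\lambda\mathbf R^2$ versus the paper's $e(\GL_2(\mathbf R))$-invariant splittings of $\mathbf R^4$ acted on by the centralizer $\GL_2(\mathbf R)_K$), and both combine Lemma \ref{onedimens} ($\dim_{\mathbf C}H^1(\Gamma,\mathbf C^2_{l\rho})=1$) with $\omega\neq 0$ (Remark \ref{nonlinear}) to align the translation cocycle with one such plane. Your step of writing $\omega=c\,\eta$ and solving for $\lambda=c$ is the same mechanism as the paper's transitivity of the $\GL_2(\mathbf R)_K$-action on $H^1\setminus\{0\}$ (which forces $\omega_2=0$ for a suitable splitting), merely phrased by moving the plane rather than the cohomology class.
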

\begin{proof}
First we set up some notation.
There is a natural embedding $e:\Aff(2,\mathbf C)\to \Aff(4,\mathbf R)$ obtained by
considering $\mathbf C$ as a real vector space, along with a linearized analogue
$le: \GL_2(\mathbf C)\to \GL_4(\mathbf R)$. 
A trivial inspection reveals that the centralizer of
$e(\GL_2( \mathbf R))$ is naturally isomorphic to $\GL_2(\mathbf R)$,
and will be referred to as $\GL_2(\mathbf R)_K$ in the sequel. 
Indeed, we can represent the elements of $e(\GL_2( \mathbf R))$, in some basis,
as matrices
\[
\begin{pmatrix}
aI & bI \\
cI & dI
\end{pmatrix}
\]
where $I$ is the $2\times 2$ identity, and $a,b,c,d\in \mathbf R$,
while those elements of $\GL_2(\mathbf R)_K$ can be written, in the same basis, as
\[
\begin{pmatrix}
A & 0 \\
0 & A
\end{pmatrix}
\]
with $A\in \GL_2(\mathbf R)$.
The intersection
$\GL_2(\mathbf R)_K\cap e(\GL_2( \mathbf C))$ is the center of $\GL_2( \mathbf C)$.\\
Now we can start the proof.
By Proposition \ref{bogomolovvv}
the image of $l\rho$ is inside $\GL_2(\mathbf R)$, and therefore the composite
$le\circ l\rho:\Gamma \to \GL_4(\mathbf R)$ 
is a reducible real representation, which splits as a direct sum $\rho_1\oplus \rho_2$
of isomorphic representations $\rho_i:\Gamma\to \GL_2(\mathbf R)$.
Observe that $\GL_2(\mathbf R)_K$ acts naturally on the set of such splittings of $le\circ l\rho$.
We want to improve this, to show that the whole
$e\circ \rho$ can be split into a sum of: a real affine representation valued in $\Aff(2,\mathbf R)$, and
a linear one valued in $\GL_2(\mathbf R)$ .\\
If we fix an $e(\GL_2( \mathbf R))$-invariant splitting 
$\mathbf R^4=\mathbf R^2\oplus \mathbf R^2$ then we have a
decomposition 
\begin{equation}
 H^1(\Gamma,\mathbf C^2_{\l\rho})\xrightarrow{\sim} H^1(\Gamma, \mathbf R^2_{\rho_1})\oplus H^1(\Gamma, \mathbf R^2_{\rho_2})
\end{equation}
where, by Lemma \ref{onedimens}, each space on the right is real $1$-dimensional;
correspondingly we obtain that the cocycle $\omega$, defining $\rho$, splits in a
pair $\omega_1\oplus \omega_2$ of real-valued cocycles.
The center of $\GL_2(\mathbf C)$ acts by scalar multiplication on
$H^1(\Gamma,\mathbf C^2_{\l\rho})$, and hence the whole $\GL_2(\mathbf R)_K$
acts transitively on $H^1(\Gamma, \mathbf R^2_{\rho_1})\oplus H^1(\Gamma, \mathbf R^2_{\rho_2})\setminus \{(0,0)\}$.
In particular, since $\omega\neq 0$, for some $e(\GL_2( \mathbf R))$-invariant
splitting of $\mathbf R^4$ we have $\omega_1=1$ and $\omega_2=0$, so then
the action of $\Gamma$ on $\mathbf R^4/\mathbf R^2_{\rho_1}$ is linear. The existence of $R$ follows.
\end{proof}
We continue by inspecting those elements commuting with the image of $\rho$, in the whole affine group.
From the existence of $R$, indeed by the previous proof, it follows that the image of $\rho$ lies in
the extension of $e(\GL_2(\mathbf R))$ by the subgroup of translations preserving $R$. This extension
is clearly isomorphic to
$\Aff(2,\mathbf R)$. We denote its image in
$\Aff(2,\mathbf C)$ by $\Aff(2,\mathbf R)_{\Delta}$, and by $G_K$ its centralizer 
in $\Aff(2,\mathbf C)$.
\begin{lemma}\label{G_Kaffine}
 $G_K$ is the subgroup of $\GL_2(\mathbf R)_K$ acting trivially on $R$. It is 
 isomorphic to $\Aff(1,\mathbf R)$
\end{lemma}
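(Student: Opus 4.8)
The plan is to pin down $G_K$ by a direct centralizer computation, organized around the commuting pair $e(\GL_2(\mathbf R))$ and $\GL_2(\mathbf R)_K$ introduced above. The cleanest bookkeeping comes from writing the real vector space as a tensor product $\mathbf C^2\cong U\otimes_{\mathbf R}W$ with $U=W=\mathbf R^2$, under which $e(\GL_2(\mathbf R))=\GL(U)\otimes 1$ (the block form $\begin{pmatrix} aI & bI \\ cI & dI \end{pmatrix}$) and $\GL_2(\mathbf R)_K=1\otimes\GL(W)$ (the block form $\begin{pmatrix} A & 0 \\ 0 & A \end{pmatrix}$), while the complex structure is $1\otimes J_0$ and so lives entirely in the $W$-factor. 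Since the two summands $\rho_1\cong\rho_2$ produced in Lemma \ref{realss} are each isomorphic to the absolutely irreducible $\GL(U)$-module $U$, every copy of $U$ inside $U\otimes W$ has the form $U\otimes\ell$ for a line $\ell\subset W$; hence the invariant real $2$-plane is $R=U\otimes\ell$, and I may fix a basis $\{e,f\}$ of $W$ with $\ell=\mathbf R e$. (Note that the centralizer defining $G_K$ must be taken among \emph{real}-affine self-maps of $\mathbf C^2$, i.e.\ inside $\Aff(4,\mathbf R)$ as in the overview, since $\GL_2(\mathbf R)_K$ is not contained in $\GL_2(\mathbf C)$.)

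Next I would impose commutation on an arbitrary real-affine map $g:v\mapsto Sv+s$. As $\Aff(2,\mathbf R)_\Delta$ is generated by its linear part $e(\GL_2(\mathbf R))$ together with the translations by vectors of $R$, the condition splits into two families. Commuting with a pure linear element $L\in e(\GL_2(\mathbf R))$ yields $SL=LS$ and $Ls=s$: the first says $S\in\GL_2(\mathbf R)_K$ (exactly the centralizer fact already recorded), while the second forces $s=0$, because $s\in U\otimes W\cong U^{\oplus 2}$ has no nonzero $\GL(U)$-fixed vector. Commuting with the translation by $t\in R$ gives $St+s=s+t$, i.e.\ $St=t$ for all $t\in R$, so $S$ fixes $R$ pointwise. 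Thus every element of $G_K$ is linear, lies in $\GL_2(\mathbf R)_K$, and acts trivially on $R$; conversely any such element visibly centralizes both the linear part and the translations of $\Aff(2,\mathbf R)_\Delta$. This establishes the first assertion, that $G_K$ is precisely the subgroup of $\GL_2(\mathbf R)_K$ acting trivially on $R$.

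Finally I would identify the resulting group. Writing $S=1\otimes B$ with $B\in\GL(W)$, the requirement $S|_R=\mathrm{id}$ on $R=U\otimes\mathbf R e$ is equivalent to $Be=e$, so $G_K\cong\Stab_{\GL(W)}(e)$, the stabilizer of a nonzero vector in $\GL_2(\mathbf R)$. In the basis $\{e,f\}$ these are the matrices $\begin{pmatrix} 1 & \beta \\ 0 & \alpha \end{pmatrix}$ with $\alpha\neq 0$, and the assignment $B\mapsto(x\mapsto\alpha x+\beta)$ is a group isomorphism onto $\Aff(1,\mathbf R)$, which also exhibits $G_K$ as the announced two-dimensional contractible group.

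The one genuinely substantive point, as opposed to routine matrix bookkeeping, is the vanishing of the translation part $s$: this is what drops $G_K$ into the linear group $\GL_2(\mathbf R)_K$ and is responsible for $G_K$ fixing $R$ \emph{pointwise} rather than merely preserving it. It rests on the representation-theoretic fact that the standard $\GL_2(\mathbf R)$-module carries no invariant vector, together with the care of computing the centralizer inside the real-affine group (so that $\GL_2(\mathbf R)_K$ may contain non-$\mathbf C$-linear maps); taking the centralizer inside the complex-affine group would instead collapse $G_K$ to the identity and is not the intended reading.
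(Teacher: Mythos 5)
Your proof is correct, and at bottom it is the same computation as the paper's: determine the centralizer of $\Aff(2,\mathbf R)_\Delta$ among real affine transformations of $\mathbf C^2=\mathbf R^4$. The differences lie in how the two inclusions are obtained. The paper first notes that $G_K$ preserves $R$ and restricts there to affine maps commuting with all of $\Aff(2,\mathbf R)$, hence acts trivially because $\Aff(2,\mathbf R)$ is centerless (why $G_K$ preserves $R$ is left implicit; it follows, e.g., from $R$ being the unique closed $\Aff(2,\mathbf R)_\Delta$-orbit), and then gets the reverse inclusion by the commutator trick: for $g\in\GL_2(\mathbf R)_K$ and $h\in\Aff(2,\mathbf R)_\Delta$, the commutator $ghg^{-1}h^{-1}$ is the translation by $(g-1)t_h$, which vanishes when $g$ fixes $R$ pointwise. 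You instead split the commutation relations over the linear and translational generators and kill the translation part $s$ by the absence of nonzero $\GL_2(\mathbf R)$-fixed vectors in $U^{\oplus 2}$; this yields both inclusions in one pass and is, if anything, more self-contained than the paper's version. Your parenthetical about the ambient group is a genuine correction rather than pedantry: the paper's text defines $G_K$ as the centralizer in $\Aff(2,\mathbf C)$, which, as you observe, is trivial (the linear part would be a complex scalar forced to fix $R$ pointwise), contradicting the lemma; the reading compatible with the lemma and with the introduction is the centralizer in $\Aff(4,\mathbf R)$, which is the one you use. One cosmetic slip: with the usual composition conventions your map $B\mapsto(x\mapsto\alpha x+\beta)$ on $\Stab_{\GL(W)}(e)$ is an anti-homomorphism; composing with inversion (or using the inverse-transpose action on the affine line $\{\phi\in W^{*}:\phi(e)=1\}$) repairs this, and the conclusion $G_K\cong\Aff(1,\mathbf R)$ is unaffected.
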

\begin{proof}
 $G_K$ acts on $R$, and since $\Aff(2,\mathbf R)$ has no center, this action
 must be trivial. This implies that $G_K< \GL_2(\mathbf R)_K$. Moreover, any
 $g\in \GL_2(\mathbf R)_K$ commutes with the linear part of any  
 $h\in  \Aff(2,\mathbf R)_{\Delta}$ by definition. Hence $t:=ghg^{-1}h^{-1}$ is a translation.
 If moreover $g$ acts trivially on $R$, also $t$ does, so then $t=0$ and $g\in G_K$.
 That $G_K$ is isomorphic to $\Aff(1,\mathbf R)$ is clear at this point, since
 it consists of matrices that can be written, in the above basis, as
 \[
\begin{pmatrix}
1 & a & \\
0 & b & \\
& & 1 & a\\
& & 0 & b
\end{pmatrix}
\]
and of course $R$ is spanned by $(1,0,0,0)$ and $(0,0,1,0)$.
\end{proof}
We have a commutative diagram of groups with exact rows and columns:
$$\begin{CD}\label{groups}
@. @. @. 1\\
@. @. @. @VVV\\
@. 1 @. @. A\subset \mathbf R^2\\
@. @VVV @. @VVV\\
1 @>>> K @>>> \Gamma @>\rho>> G_A\subset \Aff(2,\mathbf R)\\
@.    @VVV          @ |         @VVV \\ 
1 @>>> K_A @>>> \Gamma @>l\rho>> G\subset \GL_2(\mathbf R)\\
@. @VVV @. @VVV\\
@. A @. @. 1\\
@. @VVV \\
@. 1\\
\end{CD}$$
where: $G_A$ is the image of $\rho$ in $\Aff(2,\mathbf R)_{\Delta}$, which is
identified with $\Aff(2,\mathbf R)$;
$A$ is the subgroup of translations inside $G_A$.

We switch our attention to the action of $G_K$ on $\mathbf C^2$. 
We fix a splitting $\mathbf C^2= R\oplus iR$ as real linear space.
For a point $x\in \mathbf C^2\setminus R$,
there exists a unique (real) line $L_x\subset R$ such that $x$ is contained 
in its complexification, denoted $L_x^{\mathbf C}$.
Note that for any $x\in \mathbf C^2- R$ there is unique real line 
$R_x$ through $x$ which is contained in affine subspace
$x+ iR$ and intersecting $R$.
The complexification of $R_x$ intersects $R$ by a real line 
$L_x$. The former is  also equal to the complexification  $L_x^{\mathbf C}$
of $L_x$.
Note that different lines $ L_x^{\mathbf C},  L_y^{\mathbf C}$ intersect only if
they intersect in $R$.

\begin{lemma}
The set $L_x^{\mathbf C}\setminus L_x$ consists of exactly
two free orbits of $x$ under $G_K$.
\end{lemma}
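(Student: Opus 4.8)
The plan is to reduce the statement to an elementary computation in the explicit coordinates furnished by Lemma \ref{G_Kaffine}. Writing points of $\mathbf C^2$ as $(u_1,v_1,u_2,v_2)$ in the basis used there, we have $R=\{v_1=v_2=0\}=\langle e_1,e_3\rangle$ and $iR=\{u_1=u_2=0\}=\langle e_2,e_4\rangle$, with the complex structure $J$ identifying the two via $Je_1=e_2$, $Je_3=e_4$. First I would fix a generator $w\in R$ of the real line $L_x$ and set $iw:=Jw\in iR$; then $L_x^{\mathbf C}=\mathbf R w\oplus\mathbf R(iw)$, with $L_x=\mathbf R w$. Parametrizing $L_x^{\mathbf C}$ by $(t,s)\mapsto tw+s\,(iw)$, the real line is $L_x=\{s=0\}$ and therefore $L_x^{\mathbf C}\setminus L_x=\{s\neq 0\}$.

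Next I would read off the $G_K$-action in this parametrization. An element $g_{a,b}\in G_K$, with $a\in\mathbf R$ and $b>0$ (recall from the overview that $G_K$ is the \emph{contractible} group, so the scaling factor is positive), fixes $w$ and, by a direct application of the matrix in Lemma \ref{G_Kaffine} to $iw=(0,p,0,q)$, sends $iw$ to $aw+b\,(iw)$. Hence $L_x^{\mathbf C}$ is $G_K$-invariant, and in the coordinates $(t,s)$ the action becomes
\begin{equation}
 g_{a,b}\cdot(t,s)=(t+as,\,bs).
\end{equation}
In particular the sign of $s$ is an invariant, so $G_K$ preserves each of the two connected components $\{s>0\}$ and $\{s<0\}$ of $L_x^{\mathbf C}\setminus L_x$.

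Finally I would verify that each component is a single free orbit. Given $(t_0,s_0)$ and $(t_1,s_1)$ with $s_0,s_1$ of the same sign, the equations $bs_0=s_1$ and $t_0+as_0=t_1$ are solved by $b=s_1/s_0>0$ and $a=(t_1-t_0)/s_0$, which proves transitivity on each half; and if $g_{a,b}$ fixes some $(t_0,s_0)$ with $s_0\neq 0$, then $bs_0=s_0$ forces $b=1$ and then $as_0=0$ forces $a=0$, so the stabilizer is trivial. Thus $L_x^{\mathbf C}\setminus L_x$ is exactly the disjoint union of the two free orbits $\{s>0\}$ and $\{s<0\}$, as claimed. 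The one point worth emphasizing as the genuine content is the role of positivity of $b$: it is precisely because $b$ ranges over $\mathbf R_{>0}$ rather than over all of $\mathbf R^{\times}$ that the sign of $s$ is preserved, giving \emph{two} orbits instead of one. Everything else is a routine verification.
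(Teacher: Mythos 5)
Your coordinate computation of the $G_K$-action is correct and is essentially the computation the paper itself performs, but your setup has a genuine flaw: you treat $L_x$ as a \emph{linear} line, choosing a ``generator'' $w$ and writing $L_x=\mathbf R w$, $L_x^{\mathbf C}=\mathbf R w\oplus\mathbf R(iw)$, a plane through the origin. In the paper $L_x$ is an \emph{affine} line in $R$: in your coordinates it is the line through the point $x_0=(u_1,0,u_2,0)$ (the real part of $x$) with direction vector $w=(v_1,0,v_2,0)$ (the imaginary part), and it passes through the origin only when these two vectors are proportional, i.e. only when $x$ happens to lie on a complex line $\mathbf C w$ with $w\in R$. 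For a generic point --- say $x=(1,0,0,1)$, i.e. $z_1=1$, $z_2=i$, for which $L_x=\{(1,0,t,0):t\in\mathbf R\}$ --- the plane $\mathbf R w\oplus\mathbf R(iw)=\{(0,0,*,*)\}$ that you parametrize does not contain $x$ at all, so the two orbits you exhibit are not orbits of $x$, and the lemma is unproved for all such $x$. The affineness is not a pedantic point: it is exactly why the orbit space of $\mathbf C^2\setminus R$ is the Moebius band $Gr(1,2)$ of \emph{affine} lines rather than a projective line.

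The repair is one line, and after it your argument coincides with the paper's proof. Parametrize $L_x^{\mathbf C}$ by $(t,s)\mapsto x_0+tw+s(iw)$. Since elements of $G_K$ are linear and fix $R$ pointwise (Lemma \ref{G_Kaffine}), they fix both $x_0$ and $w$, and $g_{a,b}(iw)=aw+b(iw)$ exactly as you computed; hence $g_{a,b}$ acts by $(t,s)\mapsto(t+as,bs)$, which is your formula, now with $x$ sitting at $(t,s)=(0,1)$, and your transitivity and freeness verifications go through verbatim --- this is precisely the computation in the paper, where the orbit of $x$ is read off as $\{(a,b)\}$. Your closing emphasis on $b>0$ is well placed, and you are more explicit about it than the paper: under the definition of $G_K$ as the full centralizer in Lemma \ref{G_Kaffine} (which allows $b<0$ and so has two components, being isomorphic to $\Aff(1,\mathbf R)$), the set $L_x^{\mathbf C}\setminus L_x$ would be a \emph{single} free orbit; the two-orbit statement holds for the identity component, the contractible group of the introduction, later denoted $G_K^+$. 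The paper's phrase that the orbits are ``distinguished by the sign of $b$'' is implicitly doing what you made explicit.
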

\begin{proof}

The group $G_K$ is isomorphic to $\Aff(1,\mathbf R)$, and the action
of $g\in G_K$ on $x=(x_1,x_2,x_3,x_4)$ can be easily computed: there exist $a,b$ such that
 $g(x)=(x_1+ax_2,bx_2,x_3+ax_4,bx_4)$. We claim: $L_x=\{(x_1,\lambda x_2,x_3,\lambda x_4),\ \lambda \in \mathbf R\}$.
 Indeed it is a translation by $(x_1,0,x_3,0)$ of a line in $iR$; it touches $R$ for $\lambda=0$; 
 it goes through $x$ at $\lambda=1$. Finally, its
 complexification is clearly $\{(x_1+\mu x_2,\lambda x_2,x_3+\mu x_4,\lambda x_4),\ \lambda,\mu \in \mathbf R\}$.
 At this point the Lemma becomes clear, as the two free orbits of $x$ under $G_K$ 
 can be distinguished by the sign of $b$ above.
\end{proof}
The previous lemma can be rephrased by saying that the quotient space
of $\mathbf C^2\setminus R$ under the action of $G_K$ is canonically
the Grassmannian manifold, $Gr(1,2)$, of affine lines in $R$.
There is a fiber bundle structure $s:Gr(1,2)\to S^1$, where each fiber
parametrizes a family of parallel lines in $R$.
$Gr(1,2)$ is non-orientable and homeomorphic to a Moebius band. The
quotient of $\mathbf C^2\setminus R$ by $G_K^+$, a connected component
of $G_K$, is a cylinder mapping to $Gr(1,2)$ as its canonical orientable
double covering. We denote such covering by $Gr(1,2)^o$.
Trying to lift this setup on $V(=\tilde{X})$ under the unramified developing map, introduced
in equation \ref{developingmap}, $p_{\text{aff}}:V\to \mathbf C^2$, it is natural to
introduce the closed submanifold $V_R:=p_{\text{aff}}^*R$.
A crucial observation is that, since $\rho (\Gamma)$
and $G_K$ commute, the action of $\rho(\Gamma)$ descends naturally to
the orbit space $Gr(1,2)$. 
\begin{lemma}
The action of $G^+_K$ lifts to $V$, it is free on $V\setminus V_R$, and commutes with the action of $\Gamma$.
\end{lemma}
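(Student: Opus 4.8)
The plan is to lift the action at the infinitesimal level and then integrate, using compactness of $X$ to supply completeness. Let $\mathfrak g_K$ denote the two-dimensional Lie algebra of $G_K^+\cong\Aff(1,\mathbf R)^+$, and let $\xi_1,\xi_2$ be the fundamental vector fields on $\mathbf C^2$ attached to a basis of $\mathfrak g_K$; in the coordinates of the previous lemmas these are the affine fields $\xi_1=x_2\partial_{x_1}+x_4\partial_{x_3}$ and $\xi_2=x_2\partial_{x_2}+x_4\partial_{x_4}$, which satisfy $[\xi_2,\xi_1]=\xi_1$ and thus span a copy of $\mathfrak{aff}(1,\mathbf R)$ inside the vector fields on $\mathbf C^2$. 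Since $p_{\text{aff}}$ is a local biholomorphism, each $\xi_i$ has a unique $p_{\text{aff}}$-related lift $\tilde\xi_i:=(dp_{\text{aff}})^{-1}\xi_i$ on $V$, and because $p_{\text{aff}}$-relatedness is preserved under brackets (for a local diffeomorphism the related field is unique), the assignment $\xi_i\mapsto\tilde\xi_i$ extends to a Lie algebra homomorphism $\mathfrak g_K\to\mathscr X(V)$.

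First I would establish the $\Gamma$-invariance of the lifted fields, since it yields two of the three assertions at once. Because $G_K$ is by construction the centralizer of $\Aff(2,\mathbf R)_\Delta\supset\rho(\Gamma)$, each $\rho(\gamma)$ commutes with the $G_K^+$-action on $\mathbf C^2$; as a transformation commuting with a group action preserves all of its fundamental vector fields, we get $\rho(\gamma)_*\xi_i=\xi_i$. Combining this with the equivariance $p_{\text{aff}}\circ\gamma=\rho(\gamma)\circ p_{\text{aff}}$ and the uniqueness of $p_{\text{aff}}$-related lifts, one checks that $\gamma_*\tilde\xi_i$ is again $p_{\text{aff}}$-related to $\xi_i$, whence $\gamma_*\tilde\xi_i=\tilde\xi_i$ for every deck transformation $\gamma$. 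Thus the $\tilde\xi_i$ are $\Gamma$-invariant, so their flows commute with the deck action; once the fields are integrated, this is precisely the statement that the $G_K^+$-action commutes with $\Gamma$.

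The main point, and the step I expect to require the most care, is the completeness of the $\tilde\xi_i$ and the consequent integrability to a genuine $G_K^+$-action rather than a merely local one. Here I would use that, being $\Gamma$-invariant, the $\tilde\xi_i$ descend to vector fields on the \emph{compact} surface $X=V/\Gamma$, where they are automatically complete; lifting integral curves along the covering $\pi\colon V\to X$ (using that $\tilde\xi_i$ is $\pi$-related to its descent, together with uniqueness of integral curves) shows the flows remain complete on $V$. A finite-dimensional Lie algebra of complete vector fields integrates to an action of the corresponding simply connected group by the standard integrability theorem of Palais; since $G_K^+\cong\Aff(1,\mathbf R)^+$ is simply connected, this produces a genuine $G_K^+$-action on $V$, and by construction its flows are $p_{\text{aff}}$-related to those on $\mathbf C^2$, so $p_{\text{aff}}$ is $G_K^+$-equivariant.

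Finally, freeness on $V\setminus V_R$ is immediate from equivariance: if $g\cdot v=v$ for $g\in G_K^+$ and $v\notin V_R=p_{\text{aff}}^{-1}(R)$, then $p_{\text{aff}}(v)\in\mathbf C^2\setminus R$ and $g\cdot p_{\text{aff}}(v)=p_{\text{aff}}(g\cdot v)=p_{\text{aff}}(v)$, so $g=1$ because $G_K^+$ acts freely on $\mathbf C^2\setminus R$. This settles all three claims. The only genuinely delicate input is the completeness and integrability step; everything else is a formal consequence of the equivariance of $p_{\text{aff}}$ and of $G_K$ commuting with $\rho(\Gamma)$.
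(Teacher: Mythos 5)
Your proof is correct and follows essentially the same route as the paper: lift the generating vector fields of $\mathfrak g_K$ through the local biholomorphism $p_{\text{aff}}$, observe they are $\Gamma$-invariant because $G_K$ centralizes $\rho(\Gamma)$, descend to the compact surface $X$ to obtain completeness and integrate to a genuine $G_K^+$-action, and deduce freeness on $V\setminus V_R$ from freeness of the $G_K^+$-action on $\mathbf C^2\setminus R$. You merely make explicit the details (the coordinate expressions for the generators, Palais' integrability theorem, the lifting of integral curves) that the paper leaves implicit.
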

\begin{proof}
The group $G^+_K$ is the exponent of its Lie algebra $\mathfrak g_2^K$,
which is lifted on $V$ from $\mathbf C^2$. Note that vector fields generating 
$\mathfrak g_2^K$ on $\mathbf C^2$ are invariant under $\rho(\Gamma)$, hence
are invariant under $\Gamma$ on $V$. Thus $\mathfrak g_2^K$ on $V$ is also lifting of
a Lie algebra $\mathfrak g_2^K$ on $X$. Since $X$ is compact, $\mathfrak g_2^K$ is integrable
and therefore there is an action of $G_K^+$ on $X$, which lifts to an action on $V$ commuting with $\Gamma$.
Since the action of $G_K^+$ on $\mathbf C^2$ is free away from $R$, the same holds on $V\setminus V_R$.
\end{proof}
Let $S$ denote the quotient of $V\setminus V_R$ by $G_K^+$.
Since $G_K^+$ commutes with $\Gamma$ on $V\setminus V_R$, there is a natural action
of $\Gamma$ on $S$.
The projection $S\to Gr(1,2)^o$ is a $\Gamma$-equivariant local diffeomorphism, and $S$ is an open
real surface. Since $V_R$ is closed in $V$ and $\Gamma$-invariant, its image $X_R$ in $X$
is a finite union of compact, real surfaces.
\begin{remark}
The surface $X\setminus X_R$ is an Eilenberg-MacLane space. Indeed projection
$V\setminus V_R\to S$ is a fibration with contractible fiber $G_K^+$, with base an open surface $S$
having contractible universal covering. Hence the universal covering of $V\setminus V_R$ is contractible,
and the same holds for $X\setminus X_R$.
\end{remark}
Observe that the factorization $S\to S/K\to Gr(1,2)^o$ gives $S/K$ a natural 
structure of open orientable topological surface. 
In particular $K$ acts discontinuously on $S$.\\
We summarize in a commutative diagram:
\begin{equation}\label{diagrammmm}
\begin{CD}
V\setminus V_R @>G_K^+>> S @<<<\tilde{S}\\
@|  @VVV   @| \\
V\setminus V_R @>>> S/K @<<< \tilde{S}\\
@Vp_{\text{aff}}VV @VqVV @V\tilde{q}VV\\
\mathbf C^2\setminus R @>G_K^+>> Gr(1,2)^o @<<< \tilde{Gr(1,2)^o}\\
@. @VVV @VsVV\\
@. S^1@<<< \mathbf R\\
\end{CD}
\end{equation}

Where the rightmost horizontal arrows are universal coverings.
\begin{lemma}
The intersection of $\rho(G)\subset A(2,\mathbf R)$ with the normal subgroup of translation,
is either:
\begin{enumerate}\label{trivial_infinite}
 \item Trivial, or
 \item Infinitely generated dense subgroup of $\mathbf R^2$
\end{enumerate}
\end{lemma}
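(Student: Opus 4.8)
Write $A = G_A \cap \mathbf{R}^2$ for the group in question, i.e. the kernel of the linearization $G_A \to G$ in the commutative diagram of groups preceding the statement. It is normal in $G_A = \rho(\Gamma)$, and conjugation by $G_A$ acts on these translations through the linear part, so that $A$ is nothing but a $G$-invariant subgroup of the translation space $R \cong \mathbf{R}^2$, where $G = l\rho(\Gamma)$. The first thing I would record is that $G$ acts $\mathbf{R}$-irreducibly on $R$: by Lemma \ref{Gnotcyclic} its action on $\mathbf{C}^2$ is irreducible, and since by Proposition \ref{bogomolovvv} the matrices of $G$ are real and preserve $R$, any $G$-invariant real line in $R$ would complexify to a $G$-invariant complex line in $\mathbf{C}^2$, which is excluded.

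The dichotomy then follows from the classification of closed subgroups of $\mathbf{R}^2$, once the real content is isolated. If $A = 0$ we are in case (1). If $A \neq 0$, its real span is a nonzero $G$-invariant subspace of $R$ and hence equals $R$, so $A$ spans $\mathbf{R}^2$. Passing to the closure $\overline{A}$, its identity component $\overline{A}^{0}$ is a linear subspace that is characteristic in $\overline{A}$, hence preserved by the linear action of $G$; by irreducibility $\overline{A}^{0}$ is either $0$ or $\mathbf{R}^2$. In the first case $\overline{A}$ is discrete, and a discrete subgroup spanning $\mathbf{R}^2$ is a cocompact lattice, so $A \cong \mathbf{Z}^2$ and $G$ embeds into $\GL(A) = \GL_2(\mathbf{Z})$, in particular $G$ is discrete; in the second case $A$ is dense. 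Thus everything reduces to the single claim that a nontrivial $A$ cannot be finitely generated: this rules out the lattice alternative (and hence forces density) and is exactly the infinite-generation statement.

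To rule out finite generation I would exploit the arithmetic of Proposition \ref{conjugation}. Suppose $A$ is finitely generated; being torsion-free it is a full $G$-stable lattice in $A \otimes \mathbf{Q}$, which is a module over the simple $\mathbf{Q}$-algebra $B = \mathbf{Q}[G]$ spanned by $G$ (either $M_2(\mathbf{Q})$, or a quaternion algebra over $\mathbf{Q}$). Stabilizing a full lattice confines $G$ to the unit group of an order $\mathcal{O} = \{\, b \in B : bA \subseteq A \,\}$ in $B$, and such a unit group is discrete in $(B \otimes \mathbf{R})^{\times}$, so $G$ would be a discrete, arithmetic subgroup of $\GL_2(\mathbf{R})$. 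I would then argue that discreteness of $G$, and hence of $G_A = \rho(\Gamma)$, places us in the classical case of Diagram \ref{diagrammmm}: the $\rho$-action on $\mathbf{C}^2 \setminus R$ becomes properly discontinuous and the equivariant local diffeomorphism $S \to Gr(1,2)^{o}$ descends to a genuine covering, along which the $\rho(\Gamma)$-invariant line field of $R$ produces a holomorphic foliation on $X$, contradicting our standing hypothesis. This is the geometric counterpart of Bombieri's property (the listed fact, from \cite{55}, that $\ker(\Gamma \to \Gamma^{ab})$ is infinitely generated): the finitely generated, arithmetic extension $1 \to A \to G_A \to G \to 1$ is precisely the shape the holonomy is not allowed to take.

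The structural half, namely the module picture, the $\mathbf{R}$-irreducibility, and the closure dichotomy, is routine. The main obstacle is the last step: turning the discreteness of $G$ that finite generation of $A$ would force into an honest contradiction with the absence of holomorphic foliations. Concretely, one must show that once the translation lattice is finitely generated the developing map $p_{\text{aff}}$ is rigid enough for the invariant real line field on $R$ to descend holomorphically to $X$, and this is where the interplay between the arithmetic constraints on $\rho$ and the geometry of Diagram \ref{diagrammmm} has to be used in full.
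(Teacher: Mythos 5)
Your reduction is sound and, up to packaging, coincides with the second half of the paper's proof: irreducibility of the real two-dimensional representation (via Lemma \ref{Gnotcyclic}) shows that the closure of a nontrivial $A$ has identity component $0$ or $\mathbf R^2$, so everything reduces to showing a nontrivial $A$ cannot be finitely generated; and your order-theoretic step ($A$ f.g.\ implies $G$ lies in the unit group $\mathcal O^\times$ of the order $\mathcal O=\{b\in B: bA\subseteq A\}$, which is discrete) is correct as far as it goes. The gap is the endgame. First, the claim ``discreteness of $G$, and hence of $G_A=\rho(\Gamma)$'' fails in exactly one of the configurations you must exclude: if $A$ is dense but finitely generated, then $\rho(\Gamma)$ is manifestly non-discrete whatever $G$ is, so no ``classical case'' argument can even begin. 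Second, even when $\rho(\Gamma)$ is discrete, discreteness does not yield proper discontinuity on $\mathbf C^2\setminus R$, and, more fatally, there is no ``$\rho(\Gamma)$-invariant line field of $R$'': an invariant line field is an invariant direction and would contradict irreducibility, while the one natural foliation in the picture, by $G_K$-orbits, is expressly noted in the paper to be non-holomorphic. In fact, excluding discrete holonomy is the hard content of Section III (Theorem \ref{discreteinaffine} and Proposition \ref{absurd11}): it comes after this lemma, is carried out only in the case $V_R=\emptyset$, and proceeds by cohomological-dimension and $H^1$-rank arguments, not by producing a foliation; a proof of the present lemma cannot defer to it.

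The ingredient you are missing is what the paper calls its crucial observation, and it makes the proof short and self-contained. The canonical bundle $K_X$ is flat with character $\gamma\mapsto\det(l\rho(\gamma))^{-1}$; since $K_X$ is non-torsion and $\dim H^1(X,\mathbf C)=1$, this character is a non-trivial morphism $\Gamma/[\Gamma,\Gamma]\to\mathbf R^*$ whose image is virtually infinite cyclic, and by the arithmeticity of Proposition \ref{bogomolovvv} its values are rational. Hence there exists $s\in\Gamma$ with $\det(l\rho(s))^2=p/q<1$, a rational number which is not an algebraic integer. This kills finite generation at once: conjugation by $\rho(s)$ is an automorphism of the finitely generated free abelian group $A$, so the eigenvalues of $l\rho(s)$ on $\mathbf R\cdot A=\mathbf R^2$ are among the eigenvalues of an element of $\GL_r(\mathbf Z)$, hence algebraic integers, and the same applies to $s^{-1}$; thus $\det(l\rho(s))$ would be an algebraic unit, a contradiction. (In your own framework the same point reads: every element of $\mathcal O^\times$ has reduced norm in $\mathbf Z^\times=\{\pm1\}$, so $\det(G)\subseteq\{\pm1\}$ --- which is precisely what the canonical character forbids.) Once finite generation is excluded, your closure dichotomy gives density, and the lemma follows exactly as in the paper.
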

\begin{proof}
The crucial observation is that the character corresponding to the canonical bundle $K_X$
is a non-trivial morphism $\Gamma/[\Gamma,\Gamma]\to \mathbf R^*$, whose image contains an infinte
cyclic subgroup of finite index.
Hence there is an element $s\in l\rho(\Gamma)$ with det$(s)^2=p/q<1$ and $q\neq 1$.
The intersection $T_{\Gamma}$ of $\rho(\Gamma)$ with translations $\mathbf R^2$ is invariant under $l\rho(\Gamma)$.
If $T_{\Gamma}$ is nontrivial then it is infinitely generated since det$(s)^2$ is not algebraic integer.
Thus it contains arbitrary small translations.  
 Since the representation of $\Gamma$ in $\mathbf R^2$ is the same 
 as $l\rho(\Gamma)$ and hence irreducible , the subgroup
 $T_{\Gamma}\subset \mathbf R^2$ must be dense as well.
 Otherwise its closure in $\mathbf R^2$ is a subgroup isomoprhic to 
 $\mathbf R^1 + \mathbf Z\subset \mathbf R^2$ or $\mathbf R^1\subset \mathbf R^2$ with
 $\mathbf R^1$ being invariant subspace for $l\rho(\Gamma)$. 
 
\end{proof}

Consider the case in which the intersection $\rho(\Gamma)\cap \mathbf R^2=(0)$, namely 
$l\rho(\Gamma)\xrightarrow{\sim} \rho(\Gamma)$.
\begin{lemma}
Assume that $l\rho(\Gamma)$ contains a non-trivial homothety $h$. Then the intersection 
$\rho(\Gamma)\cap \mathbf R^2$ has to be infinitely generated.
\end{lemma}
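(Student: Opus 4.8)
The plan is to exploit that a non-trivial homothety has \emph{scalar} linear part, hence is central in $\GL_2(\mathbf R)$, so that its commutators with arbitrary elements of $\Gamma$ are forced to be pure translations. Lift $h$ to $\Gamma$ (still written $h$), so that $\rho(h)=(\lambda I,t_h)$ with $\lambda\neq 1$, and write the affine action as $\rho(\gamma)\colon v\mapsto l\rho(\gamma)\,v+t_\gamma$. A direct computation in $\Aff(2,\mathbf R)$, using $(A,a)(B,b)=(AB,Ab+a)$ and $(A,a)^{-1}=(A^{-1},-A^{-1}a)$, gives
\begin{equation}
\rho([h,\gamma])=\bigl(I,\,(\lambda-1)\,t_\gamma+(I-l\rho(\gamma))\,t_h\bigr),
\end{equation}
since the scalar $\lambda I$ cancels in the linear part. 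Thus every $\gamma\in\Gamma$ produces an element $w_\gamma:=(\lambda-1)\,t_\gamma+(I-l\rho(\gamma))\,t_h$ of the translation subgroup $\rho(\Gamma)\cap\mathbf R^2$.

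First I would show that some $w_\gamma$ is non-zero. Assume, for contradiction, that $w_\gamma=0$ for every $\gamma$. As $\lambda\neq 1$, set $c:=(\lambda-1)^{-1}t_h$ and rewrite the relation $w_\gamma=0$ as $t_\gamma=(l\rho(\gamma)-I)\,c$. A one-line check then shows $\rho(\gamma)(-c)=-c$ for all $\gamma$, so $\rho(\Gamma)$ fixes the common point $-c$; conjugating by the translation carrying $-c$ to the origin turns $\rho$ into its linearization $l\rho$, exhibiting $\rho(\Gamma)$ as a subgroup conjugate into $\GL_2(\mathbf R)\subset\Aff(2,\mathbf R)$. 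This is impossible by Remark \ref{nonlinear}, which asserts that the affine structure is not linear. Hence $w_\gamma\neq 0$ for some $\gamma$, and $\rho(\Gamma)\cap\mathbf R^2$ is non-trivial.

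To conclude I would invoke the dichotomy of Lemma \ref{trivial_infinite}: the intersection of $\rho(\Gamma)$ with the normal translation subgroup is either trivial or an infinitely generated (dense) subgroup of $\mathbf R^2$. Since the previous step rules out triviality, the intersection is infinitely generated, as claimed. The commutator computation and the appeal to Lemma \ref{trivial_infinite} are routine once set up; the one step deserving care is the linearization argument, where the \emph{simultaneous} vanishing of all $w_\gamma$ must be converted into a single common fixed point of $\rho(\Gamma)$ — which would contradict the non-linearity of Remark \ref{nonlinear}. The hypothesis $\lambda\neq 1$ is precisely what makes that common fixed point available, and is thus where the non-triviality of the homothety is essential.
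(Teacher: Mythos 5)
Your proof is correct and is essentially the paper's own argument in contrapositive form: the paper assumes $\rho(\Gamma)\cap\mathbf R^2$ is trivial, observes that the lift of the scalar element $h$ is then central in $\rho(\Gamma)$, so $\rho(\Gamma)$ lies in the centralizer of a non-trivial homothety --- a point stabilizer conjugate to $\GL_2(\mathbf R)$ --- contradicting Remark \ref{nonlinear}, and concludes via the dichotomy of Lemma \ref{trivial_infinite}. Your explicit commutator computation, with the common fixed point $-c$, is exactly the concrete form of that centralizer/fixed-point argument, so the two proofs coincide in substance.
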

\begin{proof}
In this case $\rho(\Gamma)$ is isomorphic to $l\rho(\Gamma)$ and hence commutes with $h$.
Thus the image $\rho(\Gamma)$ is contained in Centralizer$(h)\subset A(2,\mathbf R)$ which
is $\text{GL}_2(\mathbf R)_h\subset A(2,\mathbf R)$. Hence the representation $\rho$ is linear,
contradicting remark \ref{nonlinear}.
\end{proof}

We proceed to a detailed study of our surface $S$ and its map to $Gr(1,2)^o$.
Observe that the pre-image of any point in $\mathbf R$ under $s$ is a real line,
corresponding, in Grassmannian, to a family of parallel lines in $\mathbf R^2$.
Therefore $s$ is the universal covering of the angle map.
Moreover, the pre-image of a point in $\mathbf R$ under $s\circ \tilde{q}$ is a union
of open segments, half-lines, or complete lines, and each of them embeds
naturally into the corresponding line in Grassmannian.

Thus we can split our study into two cases: 

\begin{enumerate}
 \item The map $V\setminus V_R\to \mathbf C^2\setminus R$ is an isomorphism and
 hence $\pi_1(V\setminus V_R)$ is a cyclic subgroup of $\tilde A(2,\mathbf R)$
 which is a cyclic universal covering of the affine group  $A(2,\mathbf R)$.
 The quotient $(V\setminus V_R)/G_K^+$ is equal then to $ \tilde Gr(1,2)^o$, i.e cyclic non-ramified
 covering of the Grassmanian.
 \item The map  $V\setminus V_R \to \mathbf C^2\setminus R$ is not an isomorphism
 and hence the map $(V\setminus V_R)/G_K^+= S\to Gr(1,2)^o$ is 
 a $\Gamma$-equivariant map of open surfaces, but not a covering map.
 \end{enumerate}
 We will call the first case classical, and the second 
 case pathological.

 \section{$V_R=\emptyset$}
 In this section we exclude the possibility that $V_R=\emptyset$.
 Let us show how general steps of the proof.\\
 In the classical case group $\Gamma$ must be a discrete subgroup
 of $\tilde A(2,\mathbf R)$.
 If  $V_R=\emptyset$ then the universal covering 
 of $X$ is equal to $\mathbf C^2\setminus R$ and contractible.
 Hence cohomological dimension of $X$ which is equal
 to $4$ must be equal to cohomological dimension of
 $\Gamma < \tilde A(2,\mathbf R)$. The following Theorem \ref{discreteinaffine},
 which is of independent interest, is implemented in Proposition \ref{absurd11}
 to find a contradiction.\\
 In the non-classical case, the induced angle map $\tilde S\to \mathbf R$ must have some
 disconnected fibers. A careful study of their connected components, along with the
 geometry of corresponding stabilizers in $\Gamma$, lead to the required contradiction.

  \begin{theorem} \label{discreteinaffine}
Let  $\Gamma $ be a finitely generated
discrete subgroup of $\tilde A(2,K)\subset \tilde A(2,\mathbf R)$,
$K$ a number field.
Assume that:
\begin{enumerate}
  \item The cohomological dimension of $\Gamma$  is $4$.
  \item The linearization $l(\Gamma)\subset \GL(2,\mathbf R)$
  has no invariant subspace in $\mathbf R^2$.
  \item For any subgroup of finite index $\Gamma'<\Gamma$, the image of the  map $\det : l(\Gamma')\to 
\mathbf R^*$ is infinite cyclic.
 \end{enumerate}
Then the group contains a finite index subgroup $\Gamma'$ which interescts 
the center
$Z\subset \tilde A(2,\mathbf R)$ non-triavially. Moreover the quotient group 
$l(\Gamma')\subset GL(2,\mathbf R)$
sits in an extension: 
\begin{equation}
1\to \pi_g\to  l(\Gamma') \to \mathbf Z\to 1 
\end{equation}
where $\pi_g$ denotes the 
fundamental
group of a compact Riemann surface of genus $ g \geq 2$
  \end{theorem}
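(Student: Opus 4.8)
The plan is to read off the internal structure of $\tilde A(2,\mathbf R)$, transport the three hypotheses through its canonical quotients, and then let the cohomological dimension do all the bookkeeping. First I would fix the description $\tilde A(2,\mathbf R)=\bigl(\mathbf R_{>0}\times\widetilde{\SL}_2(\mathbf R)\bigr)\ltimes\mathbf R^2$, where $\mathbf R_{>0}$ is the group of positive homotheties and $\mathbf R^2$ the translations. Its centre is exactly $Z=\ker(\widetilde{\SL}_2(\mathbf R)\to\SL_2(\mathbf R))\cong\mathbf Z$, and the linearization is the composite $l:\tilde A(2,\mathbf R)\to\mathbf R_{>0}\times\SL_2(\mathbf R)=\GL_2^+(\mathbf R)$, whose kernel is $Z\times\mathbf R^2$ and which satisfies $\det\circ l=\sigma^2$ for the homothety character $\sigma:\tilde A(2,\mathbf R)\to\mathbf R_{>0}$. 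Two reductions are then elementary. The group $\Gamma\cap\mathbf R^2$ is discrete and $l(\Gamma)$-invariant: a rank-one such subgroup violates hypothesis (2), while a rank-two lattice would conjugate $l(\Gamma)$ into $\GL_2(\mathbf Z)$ and force $\det(l(\Gamma))\subset\{\pm1\}$, contradicting (3); hence $\Gamma\cap\mathbf R^2=0$. Consequently $\ker(l|_\Gamma)=\Gamma\cap(Z\times\mathbf R^2)$ injects into $Z$, and the conjugation identity $\gamma\,(z,v)\,\gamma^{-1}=(z,\,l(\gamma)v)$ together with irreducibility shows that every element of this kernel has trivial translation part. Thus $\ker(l|_\Gamma)=\Gamma\cap Z$, a subgroup of $\mathbf Z$, so it is either trivial or infinite cyclic.

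Next I would run the cohomological dimension count, which I expect to pin down the coarse structure. Writing $\Delta:=l(\Gamma)\cap\SL_2(\mathbf R)$, hypothesis (3) produces the two exact sequences $1\to\Gamma\cap Z\to\Gamma\to l(\Gamma)\to1$ and $1\to\Delta\to l(\Gamma)\xrightarrow{\sigma}\mathbf Z\to1$. Subadditivity of $\mathrm{cd}$ then yields $4=\mathrm{cd}(\Gamma)\le\mathrm{cd}(\Gamma\cap Z)+\mathrm{cd}(\Delta)+1$, and since $\mathrm{cd}(\Gamma\cap Z)\le 1$ this is possible only if $\Gamma\cap Z\cong\mathbf Z$ and $\mathrm{cd}(\Delta)=2$. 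The first equality is precisely the nontrivial central intersection asserted by the theorem, and it is inherited by every finite-index subgroup; the second equality is the input that will identify $\Delta$.

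The heart of the matter, and the step I expect to be the main obstacle, is to upgrade ``$\Delta\subset\SL_2(\mathbf R)$ with $\mathrm{cd}(\Delta)=2$'' to ``$\Delta$ is a cocompact surface group.'' The clean route is: a finitely generated \emph{discrete} subgroup of $\SL_2(\mathbf R)$ has $\mathrm{cd}\le1$ unless it is a cocompact Fuchsian group, where $\mathrm{cd}=2$; so discreteness plus $\mathrm{cd}(\Delta)=2$ force cocompactness, after which Selberg's lemma gives a finite-index $\Gamma'$ with $\Delta'$ torsion-free, i.e. $\Delta'\cong\pi_g$, and $\mathrm{cd}=2$ excludes the abelian (reducible) cases so that $g\ge2$. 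The genuine difficulty is discreteness of the $\SL_2(\mathbf R)$-image, which can fail for an abstract discrete subgroup of $\tilde A(2,\mathbf R)$ because of screw-motion phenomena. Here I would finally use the two hypotheses not yet consumed: that $\Gamma\subset\tilde A(2,K)$ has entries in a number field $K$, and that $\det$ remains infinite cyclic on every finite-index subgroup. The element realizing the infinite-cyclic determinant is loxodromic, with eigenvalue of absolute value $\ne1$ at the real place; the number-field structure then obstructs the accumulation that a non-discrete closure $\overline{\Delta}$ (a conjugate of $SO(2)$, or all of $\SL_2(\mathbf R)$) would require, yielding discreteness of $\Delta$. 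Finite generation of $\Delta$ I would obtain from the same count whenever $\Gamma$ is moreover a Poincaré duality group, as in the intended application where $X$ is a closed aspherical $4$-manifold: duality descends along the central $\mathbf Z$ and the $\sigma$-sequence, so that $\Gamma/Z$ is $PD(3)$ and $\Delta$ is $PD(2)$, hence a surface group directly by the Eckmann--M\"uller characterization of $PD(2)$ groups, bypassing the Fuchsian discussion altogether.

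Finally I would reassemble the pieces. Restricting $\sigma$ to the finite-index $\Gamma'$ gives the exact sequence $1\to\Delta'\to l(\Gamma')\xrightarrow{\sigma}\mathbf Z\to1$ with $\Delta'\cong\pi_g$, $g\ge2$, which is exactly the extension claimed, while $\Gamma'\cap Z\cong\mathbf Z$ supplies the nontrivial intersection with the centre. I expect the write-up to be short everywhere except at the discreteness step, which is where the number-field and determinant hypotheses must be made to earn their keep.
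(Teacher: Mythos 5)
Your overall skeleton matches the paper's: you kill the translations, identify $\ker(l|_\Gamma)$ with $\Gamma\cap Z$, run the cohomological-dimension count through the two extensions $1\to\Gamma\cap Z\to\Gamma\to l(\Gamma)\to 1$ and $1\to\Delta\to l(\Gamma)\to\mathbf Z\to 1$, and finish with the Fuchsian classification. But the step you yourself call the heart of the matter --- discreteness of $\Delta=l(\Gamma)\cap \SL(2,\mathbf R)$ --- is exactly where your proposal has no proof, and the mechanism you sketch does not work. ``Number-field entries plus a loxodromic element obstruct accumulation'' is false as a general principle: $\SL(2,\mathbf Z[1/2])$ has entries in a number field, contains loxodromic elements, and is dense in $\SL(2,\mathbf R)$. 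Arithmeticity of entries at one real place never prevents a non-discrete closure; what must be exploited is discreteness of $\Gamma$ \emph{upstairs} in $\tilde A(2,\mathbf R)$, and the whole difficulty is transferring it downstairs past the central $\mathbf Z$ (your ``screw-motion phenomenon''). The paper does precisely this: assuming $l(\Gamma)_1$ dense, the number field enters only to ensure that elliptic elements whose eigenvalues are roots of unity fall into finitely many conjugacy classes, so density yields two \emph{non-commuting} elliptic elements $h_1,h_2$ of infinite order (irrational rotations for non-proportional metrics); their powers can be corrected by central elements to produce sequences $H_{i,j}=h_i^{n_i^j}c^{m_i^j}\to 1$ in $\tilde A(2,\mathbf R)$, and the commutators $[H_{1,j},H_{2,j}]=[h_1^{n_1^j},h_2^{n_2^j}]$, in which the central corrections cancel, are nontrivial elements of $\Gamma$ converging to the identity --- contradicting discreteness of $\Gamma$. (A separate easy case, where the closure contains a proper connected Lie subgroup, is excluded by irreducibility.) Without this commutator trick or a substitute for it, your proof does not close.

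Two secondary points. First, your order of deductions is circular as written: the inference ``$4\le\mathrm{cd}(\Gamma\cap Z)+\mathrm{cd}(\Delta)+1$ forces $\Gamma\cap Z\cong\mathbf Z$ and $\mathrm{cd}(\Delta)=2$'' is valid only once you have the upper bound $\mathrm{cd}(\Delta)\le 2$, which itself comes from discreteness (a discrete subgroup of $\SL(2,\mathbf R)$ acts properly on the hyperbolic plane); so discreteness must be established \emph{before} the count, as in the paper. Second, the Eckmann--M\"uller $PD(2)$ bypass you propose requires $\Gamma$ to be a Poincar\'e duality group, which is not among the theorem's hypotheses --- $\mathrm{cd}(\Gamma)=4$ alone carries no duality --- so under the stated assumptions you cannot avoid the Fuchsian discussion.
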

\begin{proof}
We start with a:
\begin{lemma}
The above condition 2) implies that the 
intersection $l(\Gamma)_1:=l(\Gamma)\cap \SL(2,\mathbf R)\subset \GL(2,\mathbf R)$
is discrete.
\end{lemma}
\begin{proof}
Assume that $l(\Gamma)_1$ is dense in  $\SL(2,\mathbf R)$ and bring this
assumption to contradiction. The group  $\SL(2,\mathbf R)$
contains an open subset corresponding to elements with complex
complex eigenvalues $\lambda,\bar \lambda $ and $|\lambda|= 1$.
They correspond to rotations with respect to different metrics
on the real space $R = \mathbf R^2$.
Note that the such elements of $l(\Gamma)_1$ 
with $\lambda$ being a root of unity form a finite
 number of conjugacy classes due to the 
fact that the collection of roots of unity contained in a quadratic extension of 
$K$ is finite.
Thus under the above density assumption $l(\Gamma)_1$ contains two 
non-commuting conjugated
elements $h_1,h_2$ of infinite order, whose complex eigenvalues 
$\lambda,\bar \lambda $ are not roots of unity, and
satisfy $|\lambda|= 1$.

In particular if $h_1^N = h_2^N$ for some $N$ then 
$h_1,h_2$
correspond to the same invariant metric on $\mathbf R^2$ and 
represent rotations with coincident centers, in other words they commute.

Thus the fact that  $h_1,h_2$ don't commute means that they
correpsond to rotations with respect to non-proportional
metrics $g_1,g_2$ on $\mathbf R^2$.Let us fix a generator
$c$ of the central subgroup $\mathbf Z\subset \tilde A(2,\mathbf R)$. 
Note that the elements $h_i^n,n\in \mathbf Z,i=1,2$ are dense
in the respective subgroups of rotations $SO(2)_{g_i}\subset A(2,\mathbf R)$
In particular we can find  two sequences 
 of pairs of integers $(n_i^j,m_i^j),n_i^j\neq 0,i=1,2$ such that 
 $H_{i,j}= (h_i)^{n_i^j}c^{m^j_i}$ converges to identity.
 In particular the commutators $H_{1,j}H_{2,j} H_{1,j}^{-1}H_{2,j}^{-1}$ converges
  to idenity in $\tilde A(2,\mathbf R)$, albeit none of them is equal to identity.
 On the other hand  $H_1H_2 H_1^{-1}H_2^{-1}$ is equal
 to the commutator $ h_1^{n_1^j}  h_2^{n_2^j}  h_1^{-n_1^j}  h_2^{-n_1^j}$
 which is contained $l(\Gamma)_1$.Thus we obtain that 
 $\Gamma\subset \tilde A(2,\mathbf R)$ is not discrete.

This proves that $l(\Gamma)_1$ can not be dense.\\
 If the closure of $l(\Gamma)_1$ in  $\SL(2,\mathbf R)$
contains a proper non-trivial connected Lie subgroup $G_0$ then
the latter is uppertriangular or abelian 
and the represenation $l\rho$ is not irreduicble in $\mathbf R^2$
contradicting the the assumption of the theorem.

\end{proof}
\begin{remark}
In our proof we used the fact that $l(\Gamma)$ is contained
in $\GL(2,K)$ for some number field, but a similar result
holds without this assumption.\\
Moreover, an analogue of this result holds for any discrete
  subgroup of the universal covering $\tilde ASp(2n,\mathbf R)$
  of affine extension $ASp(2n,\mathbf R)$ of symplectic linear
   group since rotations constitute an open
   subset in $ASp(2n,R)$.
\end{remark}

\begin{lemma}
$\Gamma$ is  central $\mathbf Z$-extension of the fundamental
  group of a three-dimensional compact manifold, which fibers over $S^1$
  with a compact Riemann surface as fiber.
\end{lemma}
\begin{proof}
Since the image of $\det$ is nontrivial, the intersection
of $\Gamma$ with the subgroup of translations $\xrightarrow{\sim} \mathbf R^2$ must be trivial.
Indeed $\Gamma\cap \mathbf R^2$ is invariant under $l(\Gamma)$, and the action
of det$(l(\Gamma))$ shows that $\Gamma$ cannot be discrete
unless $\Gamma\cap \mathbf R^2=0$.
Since $l(\Gamma)_1$ is discrete by the previous Lemma, it is either free or $\pi_g$ with $g \geq 2$.
Since the kernel of the projection $l(\Gamma)\to l(\Gamma)_1$ is non-trivial,
cohomological dimensions satisfy cd$(l(\Gamma)) =$cd$(l(\Gamma)_1) +1$,
thus cd$(l(\Gamma))$ is either $2$ or $3$.
Since cd$(\Gamma)=4$, necessarily we have: $l(\Gamma)_1= \pi_g$
and $\Gamma$ has non-trivial cyclic kernel under composite projection
$\tilde A(2,\mathbf R)\to A(2,\mathbf R)\to \GL(2,\mathbf R)$. 
Thus the only possibility for $\Gamma$ 
satisfying the assumptions of the theorem
is a central $\mathbf Z$-extension
of the fundamental
group of three-dimensional compact manifold, fibered over $S^1$,
with a compact Riemann surface as a fiber.
\end{proof}
This concludes the proof of the Theorem.
\end{proof}
Now we can exclude the classical case in which $\Gamma$ is discrete.
\begin{proposition}\label{absurd11}
There are no $VII_0$ surfaces with $V_R=\emptyset$, and affine complex structure
inducing an affine representation of the fundamental group which satisfies the assumptions of the previous Theorem.
\end{proposition}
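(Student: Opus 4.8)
The plan is to derive a contradiction by confronting the group-theoretic structure of $\Gamma$, as pinned down by Theorem \ref{discreteinaffine}, with the holomorphic geometry of $X$. By Theorem \ref{discreteinaffine}, after passing to a finite-index subgroup $\Gamma'$ (which is harmless, since every étale cover of $X$ has the same properties by the defining list in \ref{surfaceVII}), the linearization sits in an extension
\begin{equation}
1\to \pi_g\to l(\Gamma')\to \mathbf Z\to 1
\end{equation}
with $g\geq 2$, and $\Gamma'$ is a central $\mathbf Z$-extension of a three-manifold group fibering over $S^1$ with a genus-$g$ surface as fiber. The strategy is to show that such a $\Gamma'$ forces $X$ (or its cover) to admit a nonzero holomorphic one-form — or more generally a holomorphic foliation — contradicting the standing hypothesis that $X$ carries no holomorphic foliation, i.e. $H^0(X,\mathscr L\otimes\Omega^1)=0$ for every line bundle $\mathscr L$.

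First I would exploit the surjection $l(\Gamma')\twoheadrightarrow \pi_g^{ab}=\mathbf Z^{2g}$ (composing the projection onto the surface-group quotient, or rather extracting the abelianization). The key point is that the presence of the genus-$g$ fiber group with $g\geq 2$ makes the abelianized rank of $\Gamma'$ large: $\operatorname{rk}(\Gamma'^{ab})\geq 2$. This is precisely the input needed to invoke the standard Hodge-theoretic inequality $2h^{1,0}(X)\geq b_1(X)-1$ used elsewhere in the excerpt (in the commented-out Proposition \ref{denominator}): once $b_1\geq 2$, we get $h^{1,0}(X)\geq 1$, hence a nonzero holomorphic one-form on a finite cover, hence a holomorphic foliation — the desired contradiction. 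The main work is to verify that the central $\mathbf Z$-extension and the passage through the three-manifold group genuinely propagate the rank of $H_1(\pi_g)$ into $H_1(\Gamma')$, rather than having it killed by the monodromy action of the $\mathbf Z$-quotient; I would control this by choosing $\Gamma'$ deep enough that the monodromy acts trivially on a finite-index sublattice of $H_1(\pi_g)$, which is always possible because $\operatorname{Sp}(2g,\mathbf Z)$ is residually finite.

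The hard part will be the second bullet: ensuring that the holomorphic one-form produced is genuinely nonzero on $X$ and not an artifact that dies on passing between $X$ and its covers, and reconciling the \emph{real} representation-theoretic data (the fiber structure lives in $\GL_2(\mathbf R)$, via $l(\Gamma')_1=\pi_g$ acting as a cocompact Fuchsian-type lattice) with the \emph{complex} geometry of $X$. Concretely, $l(\Gamma')_1=\pi_g$ is a discrete cocompact subgroup of $\SL(2,\mathbf R)$, so it is, up to finite index, the fundamental group of a hyperbolic Riemann surface; this is exactly the situation the excerpt flags (in the text preceding Proposition \ref{absurd11} and in the commented argument) as giving "$X$ a profusion of holomorphic one-forms." I would therefore close the argument by transporting the uniformization of the genus-$g$ quotient surface through the developing map $p_{\text{aff}}$ and the diagram \ref{diagrammmm}, using that $V_R=\emptyset$ makes $V=\tilde X=\mathbf C^2\setminus R$ contractible and hence $X$ a $K(\Gamma,1)$ with $\operatorname{cd}(X)=4$; the contradiction is then that the genus-$g$ Riemann-surface structure forces $b_1(X)\geq 2$, violating the defining constraint $\dim H^1(X,\mathbf C)=1$ of property \ref{surfaceVII}. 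This last route — playing the forced lower bound $b_1\geq 2g\geq 4$ against the hypothesis $b_1=1$ — is in fact the cleanest contradiction, and I expect it to be the decisive step.
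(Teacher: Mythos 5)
Your endgame coincides with the paper's: force $\rk(H_1(\Gamma',\mathbf Q))\geq 2$ and contradict $b_1=1$ (legitimate on any finite \'etale cover, by the list in \ref{surfaceVII}). But the step you yourself flag as ``the main work'' is settled by a claim that is false, and it is exactly the crux of the proposition. You propose to guarantee that $H_1(\pi_g)$ survives into $H_1(\Gamma')$ by choosing $\Gamma'$ deep enough that the monodromy acts trivially on a finite-index sublattice of $H_1(\pi_g)$, citing residual finiteness of $Sp(2g,\mathbf Z)$. Residual finiteness produces finite quotients detecting nontrivial elements; it does not produce invariant sublattices with trivial action. If some power $h^n$ of the monodromy acts trivially on a finite-index sublattice $L\subset \mathbf Z^{2g}$, then it acts trivially on $L\otimes \mathbf Q=\mathbf Q^{2g}$, so $h$ has finite order. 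Hence no passage to finite-index subgroups can trivialize a monodromy having an eigenvalue that is not a root of unity --- and this homologically hyperbolic case is precisely the dangerous one: the mapping torus of such an $h$ has first Betti number $1$, and a central $\mathbf Z$-extension of its fundamental group again has rank-one abelianization whenever the Euler class pairs nontrivially with $H_2$. The same objection defeats your closing claim that the genus-$g$ fiber forces $b_1(X)\geq 2g$: the fiber's homology enters $H_1(\Gamma')$ only through the monodromy coinvariants, which can vanish rationally.

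What is missing is the paper's actual key idea, which is geometric rather than group-theoretic. Each eigenvalue $\lambda$ of $h\in Sp(2g,\mathbf Z)$ yields a character and hence a flat line bundle $\mathbf C_\lambda$ with $H^1(\Gamma,\mathbf C_\lambda)\neq 0$; since $V_R=\emptyset$ makes $V$ contractible, $X$ is a $K(\Gamma,1)$ and this group equals $H^1(X,\mathbf C_\lambda)$; by Lemma \ref{van1} the only rank-one local systems on $X$ with nonvanishing cohomology are $\mathscr O_X$ and $K_X$; and the $K_X$ option is excluded because the character of $K_X$ is rational and $\neq \pm 1$, while the eigenvalues of $h$ are algebraic integers occurring in inverse pairs. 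Therefore every eigenvalue equals $1$, $h$ is unipotent, the coinvariants of $H^1(\pi_g,\mathbf Z)$ are a nontrivial $\mathbf Z^l$, and only then does $\Gamma$ surject onto an abelian group of rank $\geq 2$, contradicting $b_1=1$. Without this unipotence step (or some substitute --- e.g.\ deep virtual-Betti-number theorems for hyperbolic $3$-manifolds, which would be wholly out of scale with the paper's methods), your argument does not close.
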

\begin{proof}
Let $X$ be such a surface. Then its fundamental group $\Gamma$ satisfies the conclusion
of the previous Theorem.

Consider the action of the cyclic quotient of $\Gamma$
on the surface group $\pi_g$.
Then a generator, $h$, acts naturally as an element of $Sp(2g,\mathbf Z)$.
Given any eigenvalue $\lambda$ of $h$, also $\bar \lambda$, $\lambda^{-1}$
and $\bar{\lambda}^{-1}$ are eigenvalues of $h$.
Moreover, each of them is an algebraic integer.
For such $\lambda$ we obtain a character $\lambda : \mathbf Z\cdot h\to \mathbf C^*$, and
we have dim$(H^1(\Gamma,\mathbf C_{\lambda}))=$ dim$( H^1(X, \mathscr O(\mathbf C_{\lambda}) ))=1$
Hence $\mathscr O(\mathbf C_{\lambda})$ is equal either $\mathscr O_X$ or $\mathscr O(K_X)$ by lemma \ref{van1}.
\begin{claim}
 We have $\lambda=1$ and $\mathscr O(\mathbf C_{\lambda})=\mathscr O$.
\end{claim}
\begin{proof}
Since $K_X$ is defined by a non-trivial rational character, if
$\mathscr O(\mathbf C_{\lambda})=\mathscr O(K_X)$ then
$\lambda\in \mathbf Q\setminus \{\pm 1\}$, which implies
that $\mathscr O(\mathbf C_{\lambda})=\mathscr O(K_X) =\mathscr O(\mathbf C_{\lambda^{-1}})$,
absurd.
\end{proof}
Thus all eigenvalues of $h$ are $1$, and
hence the cyclic group generated by $h$
on $H^1(\pi_g, Z)= Z^{2g}$ acts as a unipotent 
transformation. Consider the maximal
quotient torsion-free group $M$ of 
$H^1(\pi_g, Z)$ with trivial $h$-action.
Since $h$ is nilpotent $M$ is nontrivial
and isomorphic to $\mathbf Z^l$.
Thus $\pi_1(X)$ surjects onto
a central extension
$M\to M' \to \mathbf Z h$ which is abelian group
of rank $\geq 2$ and hence 
 rk$(H^1(X,\mathbf Q)) \geq 2$ which contradicts
the properties of $VII_0$ surfaces.
This completes the prrof that the image $\tilde \rho:\pi_1(X)\subset \tilde AGL(2,\mathbf R)$
cannot be a discrete subgroup in $ \tilde AGL(2,\mathbf R)$.
\end{proof}

\begin{remark}
The above argument was earlier elaborated by Bombieri,
who proved that the commutator $[\pi_1(X),\pi_1(X)]$ is infinitely
generated. 
\end{remark}
Thus we settled the case $X_R=\emptyset$ and $\rho(\Gamma)$ discrete.\\
In case $\rho(\Gamma)$ is not discrete inside $\tilde A(2,\mathbf R)$,
we study the structure of $\Gamma$ via the 
induced angle map $s:\tilde S\to \mathbf R$.
\begin{remark}
The angle map $s$ cannot have only connected fibers. Indeed each connected component 
of a fiber $s^{-1}(t)$ embedds into $\tilde{Gr(1,2)^o}$, hence if every fiber is connected
then $S$ embedds globally into $\tilde{Gr(1,2)^o}$. But then $V\setminus V_R$
embedds into $\mathbf C^2\setminus R$ and therefore $\rho(\Gamma)$ must be discrete.
\end{remark}
The previous remark implies that the set of connected components of fibers 
$\pi_0(s)$ of the angle map $s$
has a non-trivial structure of topological tree, which we now proceed to analyze.
Let $t\in \mathbf R$ be such that $S_t:=s^{-1}(t)$ is disconnected.
  Then $S_t$ consists of open vertical intervals $\{l_x\}_{x\in \pi_0(S_t)}$ 
  and for each $x\in \pi_0(S_t)$ we can define:
  \begin{definition}
  The right of $x$, denoted by $RI(x)$, is the unique connected component of $s^{-1}(s(x),\infty)$ 
  whose closure contains $x$. Likewise the left of $x$, denoted by $LE(x)$, is the unique connected
  component of $s^{-1}(-\infty, s(x))$ whose closure contains $x$.
  \end{definition}
  In particular we can introduce a partial ordering among the various fiber components:
 \begin{definition}
 Let $l_x$ and $l_y$ be connected components of two fibers of $\phi$.
  We say that $l_x \geq l_y$ if $RI(x)\subseteq  RI(y)$.
  We can aslo say that $l_x \sim l_y$ if $l_x \geq l_y$
  and $LE (y) \subseteq LE (x)$.
  \end{definition}
  This structure is naturally related to tree-like structure
  on the set of vertical intervals ${l_x}$.
  Observe that
  if there two intervals $l_x \sim l_y$, then the union $L_x$ of intervals
  $l_z$ such that $l_z\sim l_x$ provides a fibration under $s$ onto
   an open interval $I\subset \mathbf R$ with generic connected fiber $l_z$.
 If no such fibration exists, then $L_x= l_x$ for any connected fiber component $x$.
  Consider now the action of $\Gamma$ on $S$. For $g\in \Gamma$ 
  it maps $RI(x)$ into $RI(g(x))$.

  In particular for every fiber component $x$, the group $\Gamma$ splits into three subsets:
  \begin{itemize}
  \item $\Gamma^+_x $ consists of those $g\in \Gamma$ such that $g(l_x) \geq l_x$ and $ g(l_z)\neq l_x$
  for any $l_z\in L_x$.
  \item $\Gamma^-_x $ consists of those $g\in \Gamma$ such that $l_x\geq g(l_x) $ and $ g(l_z)\neq l_x$
  for any $l_z\in L_x$.
  \item  $\Gamma^0_x$ is the subgroup
  mapping $L_x$ into itself.
  \end{itemize}
  Note that $\Gamma^+_x$ and $\Gamma^-_x$ are free semigroups.
  The key technical point of our analysis is:
\begin{lemma}
  If for every interval $l_x$ we have $l_x= L_x$
  then there exists $x$ such that $\Gamma^0_x$ is trivial, and there is an injective
  morphism $H_1(\Gamma,\mathbf Z)\to \mathbf R$.  
\end{lemma}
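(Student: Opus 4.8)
The plan is to read the hypothesis $L_x=l_x$ algebraically and then run a dynamical argument on the leaf space. Under this hypothesis $\Gamma_x^0$ is exactly the setwise stabiliser $\Stab_\Gamma(l_x)$, and the stated trichotomy $\Gamma=\Gamma_x^+\sqcup\Gamma_x^0\sqcup\Gamma_x^-$ presents $\Gamma\setminus\Gamma_x^0$ as a disjoint union of two free sub-semigroups with $(\Gamma_x^+)^{-1}=\Gamma_x^-$. Consequently the two assertions are linked: as soon as one leaf $l_x$ has $\Gamma_x^0=\{1\}$, the set $\Gamma_x^+$ is the positive cone of a left-invariant total order on $\Gamma$, and the injection $H_1(\Gamma,\mathbf Z)\to\mathbf R$ will be produced from this order together with the $\Gamma$-equivariant angle map $s\colon\tilde S\to\mathbf R$ and the canonical character. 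The existence of a leaf with trivial stabiliser is therefore the heart of the matter.

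First I would show that leaves carrying a non-trivial stabiliser are negligible. If $1\neq g\in\Stab_\Gamma(l_x)$ then $g$ fixes the level $s(x)\in\mathbf R$; when $l\rho(g)$ is not central this forces $l\rho(g)$ to fix the direction associated to $s(x)$, and a non-central element of $\GL_2(\mathbf R)$ fixes at most two directions of $\mathbf{RP}^1$. Since $\Gamma$ is countable, only countably many levels are fixed by an element with non-central linearisation (the arithmeticity of $l\rho$ from Proposition \ref{bogomolovvv} makes these eigendirections algebraic, reinforcing the point). We are in the non-discrete, pathological case, so $\Gamma$ acts densely and the angle map meets uncountably many levels; hence over a comeagre set of levels every stabilising element has central linearisation, i.e. acts on $\mathbf C^2$ as a homothety followed by a translation. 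It remains to exclude a non-trivial such element $k$ fixing a leaf $l_x$ setwise: $k$ acts trivially on the base $\mathbf R$ and preserves the transverse vertical structure, so were it to fix a leaf it would sweep out a one-parameter family of leaves fibring over an interval, that is a genuine product region $L_x\supsetneq l_x$, contradicting $L_x=l_x$ (equivalently, since $K$ acts discontinuously on $S$ with $S/K$ an open surface, $k$ would create a compact leaf). Thus a generic leaf has trivial stabiliser. I expect this last step — feeding the no-fibration hypothesis $L_x=l_x$ into the behaviour of $\ker(l\rho)$ and of the homotheties — to be the principal obstacle, since it is the one place where the geometry must be converted into a purely group-theoretic statement.

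Fixing such an $x$, we obtain $\Gamma=\Gamma_x^+\sqcup\{1\}\sqcup\Gamma_x^-$ and hence a left order with cone $P=\Gamma_x^+$. Here $gPg^{-1}=\Gamma_{gx}^+$, so to reach a homomorphism to $\mathbf R$ I would upgrade this to a bi-invariant order by showing the cone is independent of the base leaf: membership $h\in\Gamma_x^+$ means $h$ pushes $l_x$ up the tree and is detected monotonically by $s$, and the coarse comparison is governed by the translation number of the $\Gamma$-action on $\mathbf R$, a homogeneous quasimorphism and thus conjugation-invariant, while density makes the finer leaf order homogeneous across leaves. A bi-orderable group has torsion-free abelianisation, because its convex subgroups are normal with Archimedean, hence torsion-free abelian, quotients; therefore $H_1(\Gamma,\mathbf Z)$ is torsion-free, and since the surface satisfies $b_1(X)=\dim H^1(X,\mathbf C)=1$ it is infinite cyclic. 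Finally the canonical character attached to $K_X$, already seen to be a non-trivial homomorphism $\Gamma\to\mathbf R^{*}$ with infinite-cyclic image, composed with $\log|\cdot|$, is a homomorphism $H_1(\Gamma,\mathbf Z)\to\mathbf R$ with non-trivial image; as its source is $\mathbf Z$ it is injective, which is the second conclusion. The delicate point in this paragraph is the passage from the left order to torsion-freeness of $H_1$, i.e. the bi-invariance claim, which is the natural secondary obstacle.
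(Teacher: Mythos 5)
Your first half — producing a leaf with trivial stabiliser — is essentially the paper's own argument (countability of the eigendirections of elements with non-central linearisation, then the observation that a stabilising homothety or translation would force a product region $L_x\supsetneq l_x$), and it is fine modulo the same level of detail as the original. The second half, however, contains a fatal error. The claim that a bi-orderable group has torsion-free abelianisation is false. Take $G=\langle x,y,z\mid [x,y]=z^{2},\ [x,z]=[y,z]=1\rangle$: this group is torsion-free nilpotent, and it is bi-orderable by ordering lexicographically along the central series $G\supset\langle z\rangle\supset 1$, whose factors $\mathbf Z^{2}$ and $\mathbf Z$ are torsion-free abelian and central (so the resulting order is conjugation-invariant); yet $H_1(G,\mathbf Z)\cong\mathbf Z^{2}\oplus\mathbf Z/2\mathbf Z$ has torsion. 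The Conrad--H\"older theory you appeal to concerns \emph{convex} subgroups, and $[G,G]$ need not be convex, which is exactly where the implication breaks. Your preliminary step is also unjustified: conjugation carries the cone $\Gamma_x^{+}$ to $\Gamma_{gx}^{+}$, and these cones genuinely depend on the leaf — an element whose linearisation has two eigendirections fixes the corresponding levels in $\mathbf R$ and pushes leaves upward on one side and downward on the other, so it lies in $\Gamma_x^{+}$ for some $x$ and in $\Gamma_y^{-}$ for others. The translation-number quasimorphism cannot repair this, since all such elements (having fixed levels) have translation number zero.

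There is also a structural problem even if those steps could be repaired: you invoke $b_1(X)=\dim H^1(X,\mathbf C)=1$ to force $H_1(\Gamma,\mathbf Z)\cong\mathbf Z$ and then inject it into $\mathbf R$ by the canonical character. This proves the literal statement, but the lemma exists precisely to feed the Corollary that follows it, namely $\mathrm{rank}\, H_1(\Gamma,\mathbf Z)\geq 2$, which is then played off against $b_1=1$ to exclude the case $V_R=\emptyset$. That Corollary is extracted not from the statement but from the paper's construction: one assigns incommensurable transcendental values to the free generators of the free semigroup $\Gamma_x^{+}$, extends to $\Gamma_x^{-}$ by $h(g^{-1})=-h(g)$, and obtains a homomorphism $h:\Gamma\to\mathbf R$ with kernel exactly $[\Gamma,\Gamma]$; if the rank were $1$ the incommensurability would force a single generator and $\Gamma$ cyclic, which is impossible. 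A proof that presupposes rank one, as yours does, can never support that step, so the contradiction the section is driving at would never materialise.
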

\begin{proof}
  Assume 
  $\Gamma_{x}^0$ is non-trivial for every $x$.
  Observe that the set of eigen-directions of non-scalar elements of $l\rho(\Gamma)$
  is at most countable, hence there exists $x$ such that 
  $\Gamma_{x}^0$ consists either of scalar matrices or of translations.
  If there are scalar matrices then $L_x$ contains a copy of $l_x\times \mathbf R_{>0}$.
  If there are translations then $L_x$ contains a copy of $l_x\times \mathbf R$.
  Either way we obtain a contradiction to our assumption that $l_x=L_x$.\\
  Thus $\Gamma$ is a union of two free semi-groups
  $\Gamma_{x}^+$ and $\Gamma_{x}^-$ which are inverse to each other in $\Gamma$.
  Consider a morphism $h_+ :\Gamma_{x}^+\to \mathbf R$ which maps
  free generators to uncommensurable transcendental numbers. Such morphism obviously exists.
  We can then define $h_-$ on $\Gamma_{x}^-$ as $h_-(g):= h_+ (g^{-1} )$.
  Morphisms $h_+, h_-$ are compatible and define a group morphism  
  $h : \Gamma\to \mathbf R$ whose kernel coincides with $[\Gamma, \Gamma]$.
\end{proof}
\begin{corollary}
  Assumptions as in the previous Lemma, then rk$(H_1(\Gamma, \mathbf Z))\geq 2$ 
\end{corollary}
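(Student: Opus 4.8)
The plan is to read off the rank bound directly from the homomorphism produced in the previous Lemma, without constructing anything new. Recall that under the standing hypothesis $l_x=L_x$ for every $x$, the previous Lemma supplies an index $x$ for which $\Gamma^0_x$ is trivial, so that $\Gamma=\Gamma^+_x\sqcup\{e\}\sqcup\Gamma^-_x$ with $\Gamma^-_x=(\Gamma^+_x)^{-1}$, together with a group homomorphism $h:\Gamma\to\mathbf R$ whose kernel is exactly $[\Gamma,\Gamma]$ and which sends the free generators of the positive cone $\Gamma^+_x$ to pairwise uncommensurable transcendental reals. Since $\ker h=[\Gamma,\Gamma]$, the map $h$ factors through an injection $H_1(\Gamma,\mathbf Z)=\Gamma^{ab}\hookrightarrow\mathbf R$; hence $\rk H_1(\Gamma,\mathbf Z)$ equals the $\mathbf Z$-rank of the image $h(\Gamma)\subset\mathbf R$, that is, the $\mathbf Q$-dimension of the $\mathbf Q$-span of $h(\Gamma)$.

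The key step is to exhibit at least two free generators of $\Gamma^+_x$. First I would record that $\Gamma$ cannot be cyclic: by Lemma \ref{Gnotcyclic} the image $l\rho(\Gamma)$ is irreducible, in particular non-cyclic, and any homomorphic image of a cyclic group is cyclic, so a cyclic $\Gamma$ is impossible. Now if the free semigroup $\Gamma^+_x$ were generated by a single element $g$, then $\Gamma^+_x=\{g^n:n\geq 1\}$ and $\Gamma^-_x=\{g^{-n}:n\geq 1\}$, whence every element of $\Gamma$ is a power of $g$ and $\Gamma=\langle g\rangle$ is cyclic, a contradiction. Therefore $\Gamma^+_x$ has at least two free generators $g_1,g_2$.

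To conclude, the values $h(g_1),h(g_2)$ are uncommensurable, hence $\mathbf Q$-linearly independent in $\mathbf R$; so any integral relation $a\,[g_1]+b\,[g_2]=0$ in $\Gamma^{ab}$ would yield $a\,h(g_1)+b\,h(g_2)=0$ and force $a=b=0$. Thus the classes of $g_1,g_2$ in $H_1(\Gamma,\mathbf Z)$ are $\mathbf Z$-linearly independent, giving $\rk H_1(\Gamma,\mathbf Z)\geq 2$, which is the desired contradiction with the $VII_0$ property $\dim H^1(X,\mathbf C)=1$. The substance of the argument has already been carried out in the previous Lemma, so the statement is essentially a corollary; the only points requiring care are that non-cyclicity genuinely forces the positive cone to have more than one generator, and that it is the uncommensurability of the generator values — not merely the injectivity of $H_1(\Gamma,\mathbf Z)\to\mathbf R$ — that upgrades the embedding into a lower bound on the rank.
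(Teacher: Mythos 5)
Your proposal is correct and takes essentially the same approach as the paper: the paper's one-line proof --- that rank $1$ would force $\Gamma$ to be cyclic, which is impossible --- is exactly your argument that a single free generator of $\Gamma^+_x$ makes $\Gamma = \Gamma^+_x \cup \{e\} \cup \Gamma^-_x$ cyclic (excluded since $l\rho(\Gamma)$ is irreducible, hence non-cyclic), while two or more free generators have uncommensurable $h$-images and hence give rank at least $2$. You have simply written out in full the details that the paper leaves implicit.
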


\begin{proof}
Indeed the proof shows that if rk$=1$ then $\Gamma$ is cyclic, 
which is impossible.
\end{proof}
Observe that the Corollary leads to a contradiction:
since $X$ is a $K(\Gamma,1)$ we know that $H^1(X,\mathbf R)=H^1(\Gamma,\mathbf R)$
and the LHS is 1-dimensional. We deduce that
there must exist uncountably many $x\in \tilde S$ such that $l_x\subsetneq L_x$.
Let us fix one such $x$.
Arguing as in our previous remark it is clear that $L_x$ embedds into $\tilde{G(1,2)^o}$.
This implies moreover that $L_x$ has a non-trivial boundary in $S$, otherwise
$L_x$ would be equal to $S$, implying that $S$
embedds globally into $\tilde{G(1,2)^o}$. Let $\partial_x:=\partial L_x$ be the non-empty
boundary of $L_x$ in $S$. It is a union of $s$-vertical segments, and $s(\partial_x)$
consists of either one or two points. Denote by $\Gamma_x^0$ the stabilizer of $L_x$ in $\Gamma$.
A useful intuitive description of $L_x$ and $\partial_x$ is as follows:
$L_x$ is a maximal subset of $S$ on which $s$ restricts to a fibration with intervals as fibers.
Its boundary $\partial_x$ is the fiber of $s$ in the closure of $L_x$ where the fibration
property breaks down, i.e. $\partial_x$ is not connected and a non-trivial disjoint union of segments.

\begin{lemma}
 The non-trivial stabilizer $\Gamma_x^0$ is a discrete subgroup of $\tilde{A(2,\mathbf R)}$
 and moreover it is contained in a subgroup of transformations preserving one direction in $\mathbf R^2$.
\end{lemma}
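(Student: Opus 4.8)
The plan is to establish two separate assertions about $\Gamma_x^0$: first its discreteness, and then the fact that it preserves a direction in $\mathbf R^2$. I would begin by recalling the geometric meaning of $L_x$: it is a maximal sub-fibration of $S$ under the angle map $s$, whose fibers are vertical intervals, and whose boundary $\partial_x$ consists of vertical segments over at most two points of $\mathbf R$. Since $\Gamma_x^0$ stabilizes $L_x$, it acts on the base interval $I\subset \mathbf R$ of the fibration $s\colon L_x\to I$, and hence on the boundary data $s(\partial_x)$. The group $\Gamma_x^0$ therefore preserves the family of parallel lines in $R=\mathbf R^2$ corresponding, under the identifications in Diagram \ref{diagrammmm}, to this range of angles; this will be the source of the invariant direction.

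For discreteness, the strategy is to use the already-established fact (from the analysis following Lemma \ref{realss}) that \emph{global} discreteness of $\Gamma$ in $\tilde A(2,\mathbf R)$ fails precisely because $S\to Gr(1,2)^o$ is not a covering, i.e. because some fibers are disconnected. The key point is that on the sub-fibration $L_x$, where $s$ \emph{does} restrict to a genuine interval-bundle with connected fibers, the map $L_x\to \tilde{Gr(1,2)^o}$ is injective (as already remarked in the text, $L_x$ embeds into $\tilde{Gr(1,2)^o}$). Thus $\Gamma_x^0$ acts on $L_x$ in a way that factors through a faithful action on an embedded piece of $\mathbf C^2\setminus R$, so its image in $\tilde A(2,\mathbf R)$ inherits discreteness from the properly discontinuous action of $\Gamma$ on $\tilde S$. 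Concretely, I would argue that if $\Gamma_x^0$ accumulated at the identity, one could produce a sequence of group elements moving points of $L_x$ by arbitrarily small amounts while respecting the embedding into $\mathbf C^2\setminus R$, contradicting the fact that $\Gamma$ acts discontinuously (via $K$) on $S$, as recorded just after Diagram \ref{diagrammmm}.

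For the invariant direction, I would exploit that $s(\partial_x)$ is a finite set (one or two points). Every $g\in\Gamma_x^0$ fixes $L_x$ setwise, hence permutes the finitely many boundary segments and fixes the set $s(\partial_x)\subset\mathbf R$. Passing to a finite-index subgroup, each element fixes each endpoint of $I$, hence fixes the corresponding angle(s), i.e.\ the corresponding point(s) of $S^1=Gr(1,2)/(\text{parallelism})$. Since the action of $l\rho(\Gamma)$ on $Gr(1,2)$ is induced from the linear action on $\mathbf R^2$ and $s$ is the angle map, fixing an angle means preserving the associated line-direction in $R$; this is exactly an invariant one-dimensional subspace for the linear part of $g$. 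Combined with the fact that $\Gamma_x^0$ stabilizes $L_x$ rather than all of $S$, this yields that $\Gamma_x^0$ lands in the subgroup of $\tilde A(2,\mathbf R)$ preserving a single direction.

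The main obstacle I anticipate is handling the possibility that $s(\partial_x)$ consists of \emph{two} distinct points rather than one, since then an element of $\Gamma_x^0$ might \emph{swap} the two boundary angles rather than fixing each. This is where passing to an index-two subgroup is essential, and one must check that this preserves all the structural hypotheses — which is permitted, since the excerpt repeatedly notes (via Selberg's lemma and the fact that all finite \'etale covers of $X$ retain the defining properties) that we may freely replace $\Gamma$ by a finite-index subgroup. A secondary subtlety is ensuring that the preserved direction is genuinely a linear invariant subspace in $\mathbf R^2$ and not merely an invariant affine line; here the reduction to the linear part $l\rho$ and the identification of $S^1$ with parallelism-classes of lines, as set up before Diagram \ref{diagrammmm}, is what makes the direction linear.
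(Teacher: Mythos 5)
Your treatment of the invariant direction is correct and coincides with the paper's own argument: every element of $\Gamma_x^0$ permutes the at most two connected components of $\partial_x$, a subgroup of index at most $2$ fixes each point of $s(\partial_x)$, and fixing an angle forces the linear part to preserve the corresponding one-dimensional subspace of $\mathbf R^2$ (your remark that this yields a genuinely linear, not merely affine, invariant direction is exactly the right point). Both you and the paper need the index-two reduction, and it is harmless for the downstream cohomological-dimension argument.

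The discreteness step, however, has a genuine gap: the two facts you invoke are respectively false and insufficient. The action of $\Gamma$ on $\tilde S=S$ is \emph{not} properly discontinuous --- in the pathological case being treated here $\rho(\Gamma)$ is non-discrete and $\Gamma$-orbits on $S$ are dense, which is precisely why this case needs a separate analysis --- so nothing can be ``inherited'' from it. And what is recorded after Diagram \ref{diagrammmm} is only that $K=\ker\rho$ acts discontinuously on $S$; since $K$ maps to the identity of $\tilde A(2,\mathbf R)$, its discontinuity cannot control a sequence $\rho(g_n)\to 1$ with $g_n\in\Gamma_x^0$. The correct source of discreteness, and the one the paper uses, is the deck action of $\Gamma$ on $V=\tilde X$, which is properly discontinuous because $V$ is the universal cover of the compact manifold $X$: since $L_x$ embeds in $\tilde{Gr(1,2)^o}$, its $G_K^+$-preimage in $V\setminus V_R=V$ embeds $\rho$-equivariantly into $\mathbf C^2\setminus R$ under $p_{\text{aff}}$, so distinct elements $g_n\in\Gamma_x^0$ with $\rho(g_n)\to 1$ would move points of this preimage by arbitrarily small amounts, contradicting proper discontinuity there. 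One cannot argue on $L_x$ itself, as you do: proper discontinuity does not descend along the $G_K^+$-quotient $V\to S$ (that failure is exactly the mechanism producing dense $\Gamma$-orbits on $S$). Finally, you take non-triviality of $\Gamma_x^0$ as a hypothesis, whereas the paper proves it as part of the lemma and uses it later: if $\Gamma_x^0$ were trivial, $\Gamma$ would act freely on the tree of components $L_y$ and hence be free, contradicting $\mathrm{cd}(\Gamma)=4$ (recall $X$ is a $K(\Gamma,1)$ when $V_R=\emptyset$).
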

\begin{proof}
 Since the $\Gamma$-orbit of $L_x$ is a tree, if $\Gamma_x^0$ were trivial then 
$\Gamma$ would be a free group, which is impossible. Hence $\Gamma_x^0$
is nontrivial, and a subgroup of index at most 2 
acts on the each of the (at most 2) connected components of $\partial_x$.
Hence $\Gamma_x^0$ preserves at least one direction corresponding to a point of $s(\partial_x)$.
Finally since $L_x$ embedds in $\tilde{G(1,2)^o}$ its pre-image in $V\setminus V_R$
embedds in $\mathbf C^2\setminus R$ and hence $\Gamma_x^0$ acts discretely.
\end{proof}
\begin{lemma}
 $\Gamma_x^0$ contains at most a cyclic subgroup of translations.
\end{lemma}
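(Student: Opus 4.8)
The plan is to show that, up to finite index, the translations inside $\Gamma_x^0$ are forced to lie in a single line of $R$, and then to invoke discreteness. First I would collect what the previous two lemmas provide: $\Gamma_x^0$ is a discrete subgroup of $\tilde{A(2,\mathbf R)}$ which preserves a direction $\ell\subset R\cong \mathbf R^2$, namely the direction corresponding to a boundary angle $\theta_0\in s(\partial_x)$; moreover a subgroup $\Gamma_x'\leq \Gamma_x^0$ of index at most $2$ preserves each connected component of $\partial_x$, hence stabilizes a single connected segment $\sigma\subset \partial_x$ lying over $\theta_0$. Writing $T_x:=\Gamma_x^0\cap \mathbf R^2$ for the intersection with the translation subgroup $\mathbf R^2\subset \tilde{A(2,\mathbf R)}$ (on which the covering $\tilde{A(2,\mathbf R)}\to A(2,\mathbf R)$ is injective, the center sitting in the linear part), discreteness of $\Gamma_x^0$ makes $T_x$ a discrete subgroup of $\mathbf R^2$, hence free abelian of rank at most $2$. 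It therefore suffices to prove that $T_x':=\Gamma_x'\cap \mathbf R^2$ has rank at most $1$: being of finite index in $T_x$ it has the same rank, and a rank $\leq 1$ discrete subgroup of $\mathbf R^2$ is cyclic.

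Next I would make the translation action explicit on the embedded copy of $L_x$. Recall that $L_x$ embeds $\Gamma_x^0$-equivariantly into $\tilde{Gr(1,2)^o}$, which I coordinatize as $\mathbf R^2$ with coordinates $(\theta,p)$, where $\theta$ is the lifted angle and $p$ the signed position of a line, so that $s$ is the projection $(\theta,p)\mapsto \theta$. A translation $v\in \mathbf R^2$ carries an affine line of direction $\theta$ to a parallel line, so it acts on $\tilde{Gr(1,2)^o}$ by $(\theta,p)\mapsto (\theta,\,p+\langle v,n_\theta\rangle)$, where $n_\theta$ is the unit normal to the direction $\theta$. In particular $v$ fixes every angle-slice $\{\theta\}\times \mathbf R$ and acts on it by the shift $\langle v,n_\theta\rangle$.

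Finally I would exploit the boundary. The stabilized segment $\sigma$ lies inside the slice $\{\theta_0\}\times \mathbf R$ and is a \emph{proper} connected segment: it is one component of the fiber of $s$ over $\theta_0$, where by definition the fibration of $L_x$ by intervals degenerates into a non-trivial disjoint union, so $\sigma\subsetneq \{\theta_0\}\times \mathbf R$ is not the whole line. For $v\in T_x'$ the map above restricts on this slice to $p\mapsto p+\langle v,n_{\theta_0}\rangle$; since $v$ preserves $\sigma$ while a nonzero shift cannot carry a proper connected segment onto itself, we must have $\langle v,n_{\theta_0}\rangle=0$. Equivalently $v$ is parallel to $\ell$, so $T_x'\subset \ell\cong \mathbf R$. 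Being discrete in a line, $T_x'$ is cyclic, and by the first paragraph so is $T_x$, which is the assertion.

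The step I expect to be the main obstacle is precisely this last reduction: one must be certain that the component $\sigma$ of $\partial_x$ stabilized by $\Gamma_x'$ is a proper segment rather than a full line, for otherwise a nonzero shift could preserve it and the argument would collapse. This is exactly the content of describing $\partial_x$ as the locus where the interval fibration breaks into a non-trivial disjoint union of segments, so that over $\theta_0$ the fiber is genuinely disconnected and no component can fill the slice; making this rigorous requires only unwinding that definition. A minor secondary point is the passage between $\Gamma_x'$ and $\Gamma_x^0$, which is disposed of by the elementary rank comparison of the first paragraph.
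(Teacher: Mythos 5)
Your overall strategy---reduce to the translation subgroup, let it act by shifts on the angle-slice over the boundary direction $\theta_0$, and use invariance of boundary data to kill the normal component---is close in spirit to the paper's, but the step you dismiss as secondary is where the proof actually breaks. You assume that a subgroup $\Gamma_x'\leq \Gamma_x^0$ of index at most $2$ stabilizes a \emph{single} connected segment $\sigma\subset\partial_x$. The preceding lemma does not give this: its ``(at most 2) connected components'' refers to the at most two angle-fibers of $\partial_x$, i.e.\ the parts of $\partial_x$ lying over the one or two points of $s(\partial_x)$, and what the finite-index subgroup preserves is each such fiber \emph{as a whole}. Over the breakdown angle $\theta_0$ that fiber is, by the paper's own description, a non-trivial disjoint union of segments---possibly infinitely many---and $\Gamma_x^0$ is free to permute them. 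Concretely, nothing you use excludes the configuration where the translation subgroup is $\mathbf Z\tau_1\oplus\mathbf Z v$ with $\tau_1$ parallel to the direction $\theta_0$ (shift $0$) and $v$ of shift $t_0\neq 0$, while the fiber of $\partial_x$ over $\theta_0$ maps to a $t_0$-periodic family of disjoint proper segments $\bigcup_{n\in\mathbf Z}\bigl([0,c]+nt_0\bigr)$: this translation group is non-cyclic, it preserves $\partial_x$ and the angle-fiber, yet no individual segment is invariant, so your step ``a nonzero shift cannot carry a proper segment onto itself'' never gets to apply. Properness of $\sigma$, the point you flagged as the main obstacle, is comparatively harmless; it is the \emph{invariance} of $\sigma$ that fails, and with it the whole argument.

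This permutation phenomenon is exactly what the paper's proof is built to handle: instead of stabilizing one segment, it considers the orbit of a single segment of $\partial_x$ under the entire translation subgroup. If that subgroup is not cyclic, the induced group of shifts of the slice $\{\theta_0\}\times\mathbf R$ is a non-cyclic subgroup of $\mathbf R$, hence dense rather than merely infinite cyclic (modulo the degenerate possibility of translations parallel to $\theta_0$, which must be excluded separately, e.g.\ because such a translation acts on every interval fiber of $L_x$ by a nonzero shift and would force those fibers to be complete lines, incompatible with the fibration breaking down at $\theta_0$); therefore the orbit of one segment is dense in the slice, and since $\partial_x$ is closed and invariant it must be the entire slice. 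The paper then gets its contradiction from a translation-invariant neighborhood of that slice, which places $\partial_x$ in the interior of $L_x$ even though it is the boundary. Some version of this orbit/density mechanism---or any other device ruling out an infinite permutation of the boundary segments---is the missing idea; as written, your proof does not establish the lemma.
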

\begin{proof}
 Let $\tau$ be any translation on $\mathbf R^2$, then it preserves the fibers of the
 angular fibration $G(1,2)\to S^1$. If the translation is in 
 $\Gamma_x^0$ then it further preserves the boundary $\partial_x$, which is a union of $s$-vertical segments.
 Observe that if the subgroup of translations in $\Gamma_x^0$ is not cyclic then 
 the orbit of a connected segment in $\partial_x$ is dense in the corresponding fiber of $G(1,2)^o\to S^1$, 
 which means that $\partial_x$ coincides with that fiber.
This is however
 impossible: a neighborhood of such fiber in $S$ is also invariant under the same group 
 of translations, which clearly implies that $\partial_x$ is contained in the interior of $L_x$. 
 Contradiction.
\end{proof}

\begin{lemma}
 $\Gamma_x^0$ is abelian of rank at most 2.
\end{lemma}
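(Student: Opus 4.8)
The plan is to exploit the two facts just established about $\Gamma_x^0$: it is discrete in $\tilde A(2,\mathbf R)$, it preserves (after passing to the subgroup of index $\le 2$ acting on each component of $\partial_x$) a direction of $\mathbf R^2$, and its group of translations $T:=\Gamma_x^0\cap\mathbf R^2$ is at most infinite cyclic. Preserving a direction means that, in a suitable basis, $l(\Gamma_x^0)$ lies in the Borel subgroup $B$ of upper triangular matrices, so $\Gamma_x^0$ is contained in the solvable group $P:=\mathbf R^2\rtimes B$ of affine maps fixing the flag $0\subset\langle e_1\rangle\subset\mathbf R^2$. As a discrete subgroup of a solvable Lie group it is polycyclic, and the whole task is to pin down that it is abelian and of Hirsch rank at most $2$.

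I would first dispatch the clean case, in which $s(\partial_x)$ consists of \emph{two} distinct directions $\delta_1\neq\delta_2$, each fixed by $\Gamma_x^0$. Then $l(\Gamma_x^0)$ fixes two distinct points of the circle $S^1$ of directions, so every element has $\delta_1,\delta_2$ as eigendirections and $l(\Gamma_x^0)$ is \emph{simultaneously diagonalizable}, hence abelian, sitting inside the diagonal torus $(\mathbf R^*)^2$. For the rank I would split on $T$. If $T=0$ then $\Gamma_x^0\xrightarrow{\sim}l(\Gamma_x^0)$ injects into $(\mathbf R^*)^2$; discreteness of $\Gamma_x^0$ (using that any accumulation of the linear parts would, for a discrete group, have to be compensated by translations lying in $T=0$) forces the image to be a discrete subgroup of $(\mathbf R^*)^2$, whence rank $\le 2$. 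If $T\neq 0$, then $T=\langle\tau\rangle$ is infinite cyclic and $\tau$ is a common eigenvector, since $l(g)\tau\in T$; discreteness then forces $l(g)\tau=\pm\tau$, so the diagonal character of $l(\Gamma_x^0)$ along $\tau$ takes values in $\{\pm1\}$. Thus the diagonal image has rank $\le 1$, and together with $T$ we again obtain rank $\le 2$; commutativity holds after passing to the further index $\le 2$ subgroup on which this character is trivial.

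The delicate case, which I expect to be the main obstacle, is when $s(\partial_x)$ is a \emph{single} direction $\delta$: now $l(\Gamma_x^0)$ need only fix one point of $S^1$ and may a priori contain nontrivial unipotent shears $\begin{pmatrix}1&b\\0&1\end{pmatrix}$, so that $\Gamma_x^0$ could try to be a non-abelian Heisenberg-type lattice inside the nilradical of $P$, whose center is exactly the group of translations along $\delta$. To exclude this I would use that the commutator of two nilradical elements is a translation along $\delta$: a genuine shear together with any element carrying a nonzero translation component transverse to $\delta$ produces a \emph{second} independent translation, contradicting that $T$ is at most cyclic. The point that genuinely needs care is that the off-diagonal entries of the shears may form a dense set while $\Gamma_x^0$ remains discrete (so that $l(\Gamma_x^0)$ itself need not be discrete, the escaping translations keeping the group separated); here I would bring in the discreteness of $\Gamma_x^0$ in $\tilde A(2,\mathbf R)$, the fact that $\Gamma_x^0\subset \tilde A(2,K)$ is defined over a number field, and the cyclic-translation lemma, to rule out the coexistence of a dense shear-projection with an independent transverse translation, thereby forcing the unipotent part to be trivial. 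Once shears are eliminated, $l(\Gamma_x^0)$ is diagonal and one concludes exactly as in the previous paragraph that $\Gamma_x^0$ is abelian of rank at most $2$.
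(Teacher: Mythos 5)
Your reduction to the Borel subgroup and your ``clean'' two-direction case are reasonable in outline (though even there the parenthetical justification that a trivial translation subgroup forces $l(\Gamma_x^0)$ to be discrete is not right as stated: translation parts of non-translation elements need not lie in $T$, and one must instead argue via common fixed points of commuting diagonalizable affine maps). The fatal problem, however, is exactly where you flag it: in the single-direction case your plan cannot be completed with the tools you propose. Concretely, let $u=\left(\begin{smallmatrix}1&1\\0&1\end{smallmatrix}\right)$ (a pure shear) and let $g$ be the affine map with linear part $\left(\begin{smallmatrix}1&\sqrt2\\0&1\end{smallmatrix}\right)$ and translation part $(0,1)$. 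The group $\langle u,g\rangle$ is a lattice in the real Heisenberg group, hence discrete in $A(2,\mathbf R)$ and (since the Heisenberg group is simply connected) it lifts to a discrete subgroup of $\tilde A(2,\mathbf R)$; it is defined over the number field $\mathbf Q(\sqrt 2)$; it preserves the single direction $(1,0)$; and its subgroup of pure translations is infinite cyclic, generated by $[u,g]=$ translation by $(1,0)$ (irrationality of $\sqrt2$ forces any pure translation into the derived subgroup). Yet it is non-abelian of Hirsch rank $3$. So ``discrete in $\tilde A(2,K)$, one invariant direction, cyclic translations'' do \emph{not} imply the lemma: your Case B is not merely unfinished, it is irreparable from those hypotheses alone. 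Note also that in this example the shear parameters $\{m+n\sqrt2\}$ are dense in $\mathbf R$ while the group stays discrete, and the transverse motion is tied to the $\sqrt2$-shear inside a single generator, so your hoped-for ``second independent translation'' never materializes: all commutator translations are parallel and commensurable with $\tau$.

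What is missing is the geometric input that the paper uses, namely that $\Gamma_x^0$ acts freely and properly discontinuously on a domain which $p_{\text{aff}}$ embeds into $\mathbf C^2\setminus R$ (this was the content of the preceding lemmas), which is much stronger than discreteness of $\Gamma_x^0$ as a subgroup. The paper argues: (1) $\Gamma_x^0$ contains no nontrivial \emph{linear} element, because such an element acts on the invariant line by scalar multiplication (a shear even fixes the complexified invariant line pointwise), contradicting discreteness of the action on the domain; (2) hence the translation subgroup is nontrivial, and cyclic by the previous lemma, generated by $\tau$; (3) after passing to a subgroup of index at most $2$, $\tau$ is central, so every linear part fixes the translation vector $t$; (4) the case $t$ parallel to the invariant direction is again excluded by non-discreteness of the induced action -- this is precisely the step that kills the Heisenberg configuration above -- so $t$ is transverse, every linear part fixes two independent directions with eigenvalue $1$ along $t$, hence $l(\Gamma_x^0)<\mathbf R^*$ and $\Gamma_x^0$ is abelian of rank at most $2$. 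Your commutator observation is a correct ingredient, but without the action-theoretic steps (1) and (4) it cannot close the argument.
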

\begin{proof}
We know that $l(\Gamma_x^0)$ fixes a direction in $\mathbf R^2$,
which we can assume is $(1,0)$. Hence we can assume
$l(\Gamma_x^0)$ sits in the upper triangular group $UT_2<\GL_2$.\\
 The first step is to show that $\Gamma_x^0$ contains a non-trivial translation.
 In fact we can show a bit more: that $\Gamma_x^0$ contains no linear transformations.
 If $A\in \Gamma_x^0\cap UT_2$, then $A$ acts on the line spanned 
 by $(1,0)$ via multiplication by $a\in \mathbf R^*$. This contradicts the discreteness
 of the action unless $A=\mathbf 1$.\\
 Hence the subgroup of translations is nontrivial and by the previous Lemma cyclic, 
 and we have an exact sequence:
 $$1\to \mathbf Z\tau \to \Gamma_x^0\to l(\Gamma_x^0)\to 1$$
 Upon replacing $\Gamma_x^0$ with a subgroup of index at most 2 we can assume that 
 $\tau$ is in the center. 
 This implies, by direct computation, that every element of
 $l(\Gamma_x^0)$ is the identity along the direction along which $\tau$ translates:
 if $\tau(x)=x+t$ and $A\in l(\Gamma_x^0)$ then $At=t$. We have two cases:\\
 1) The direction $t$ along which $\tau$ translates is not $(1,0)$, and 
 then $l(\Gamma_x^0)< \mathbf R^*$.\\
 2) The direction $t=(1,0)$, and then $l(\Gamma_x^0)< A(1,\mathbf R)$.\\
 Both statements follow by a simple computation combining $A\in UT_2$ with $At=t$.
 Observe that case 2) cannot happen: indeed the induced action of $\Gamma_x^0$ along 
 direction $(1,0)$ 
 is by scalar multiplication
 and hence not discrete.
 Therefore we must be in case 1) which is certainly abelian of rank at most 2.
\end{proof}

Let us show how the fact that $\Gamma_x^0$ is abelian of rank at most 2 implies the Theorem
for the case $V_R=\emptyset$. In fact since $X$ is a $K(\Gamma, 1)$ we know that $\Gamma$
has cohomological dimension cd$(\Gamma)=4$. On the other hand $\Gamma$ acts on the topological tree
$\pi_0(s)$ of fibers of the angle map $s:\tilde S\to \mathbf R$. It follows from Bass-Serre theory
\cite{5} that cd$(\Gamma)\leq \text{cd}(\Gamma_x^0)+1$ where $\Gamma_x^0$ is a stabilizer of a vertex.
However $\Gamma_x^0$ is abelian of rank at most 2, hence its cohomological dimension is at most 2.
Therefore $\Gamma$ has cohomological dimension $\leq 3$. This concludes the proof of the
Theorem in the case $V_R=\emptyset$.

\section{$V_R\neq \emptyset$}\label{four}

In this section we exclude the possibility that
$V_R\neq \emptyset$, thereby proving the Main Theorem.
Before getting into the proof, let us recall that 
the only real surfaces carrying an affine structure
have vanishing Euler characteristic, hence are
the torus, and the Klein bottle, which is covered by a torus.
The affine structures on a torus are classified in \cite{ny}.
In fact they correspond to discrete rank-2 abelian subgroups
inside connected abelian Lie subgroup of the universal cover
$\tilde{A(2,\mathbf R)}$ of the affine group.
There are exactly 
five distinct affine structures on $2$-dimensional real tori:
\label{real affine torai}
\begin{itemize}
 \item [(i)] A lattice in $\mathbf R^2$
 \item [(ii)] A discrete cyclic group in $\mathbf R^2\setminus \{0\}$
 \item [(iii)] A rank-2 discrete subgroup of $\mathbf C\subset \tilde{A(2,\mathbf R)}$.
 Under the universal cover map $\tilde{A(2,\mathbf R)}\to A(2,\mathbf R)$, the image of such rank 2 discrete
 subgroup to a non-discrete rank-2 subgroup of $\mathbf C^*$ 
 \item [(iv)] A lattice in $\mathbf R^*\times \mathbf R^*$
 \item [(v)] A lattice in $\mathbf R\times \mathbf R^*$
\end{itemize}
There is a well defined notion of line in a real torus $T$ with
affine structure. In general such lines are affinely isomorphic either to
complete lines, or half lines, or closed intervals in $\mathbf R^2$.
Note that in case (i) all lines in $T$ correspond to complete lines 
in $\mathbf R^2$. Similarly in cases (ii), (iii) all lines in $T$
which correspond to lines not passing through $0\in \mathbf R^2$
are affinely isomorphic to complete lines, and hence this property
is violated only by a codimension 1 subset of lines in $Gr(1,2)$.
In case (iv) we have complete lines and half lines, while in case (v)
we have half lines and closed intervals.\\
Now we set up some notation we need in the proof.
Denote by $V_R$ the pre-image of $R$ inside $V$, and by $X_R$
 its image in $X$. $X_R$ is a disjoint union of a finite number of real, compact 
 surfaces that inherit an affine structure from $X$. We can 
 assume therefore that every component of $X_R$ is a torus.
 Correspondingly, each connected component of $V_R$ maps
 homeomorphically to $R$, and the image can be $R$, $R\setminus 0$,
 an open half-space $H$, or a quadrant $Q$ - by the above remark on
 affine structures on tori.\\
 Denote by $V_R^i$ the connected components of $V_R$ -
 which we can assume to be real 2-tori - and for each $i$
 let $U_i=\{x\in V $ s.t. $\overline{G_K\cdot x}\cap V_R^i\neq \emptyset \}$. This is
 a $G_K$-invariant open neighborhood of $V_R^i$.
 \begin{lemma}
 Let $x\in V\setminus V_R$ be any point, and $G_K^+\cdot x$ its orbit.
 Then the boundary of the closure of $G_K^+\cdot x$ is a disjoint union of segments,
 which are simultaneously embedded into $R\subset \mathbf C^2$ under $p_{\text{aff}}$.
\end{lemma}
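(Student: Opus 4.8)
The plan is to transport the question to $\mathbf{C}^2$ through the developing map and read the boundary off from the explicit form of the $G_K^+$-action. Write $p:=p_{\text{aff}}(x)=(p_1,p_2,p_3,p_4)$, where $(p_2,p_4)\neq(0,0)$ because $x\notin V_R$. Using the explicit action $g_{a,b}\cdot(y_1,y_2,y_3,y_4)=(y_1+ay_2,\,by_2,\,y_3+ay_4,\,by_4)$ recorded in Lemma \ref{G_Kaffine}, a direct computation shows that $G_K^+\cdot p\subset\mathbf{C}^2$ is an embedded surface whose closure has boundary the single real line
\[
\ell=\{(p_1+ap_2,\,0,\,p_3+ap_4,\,0):a\in\mathbf{R}\}\subset R.
\]
First I would show that the boundary $\partial:=\overline{G_K^+\cdot x}\setminus(G_K^+\cdot x)$, formed in $V$, lies in $V_R$ and satisfies $p_{\text{aff}}(\partial)\subseteq\ell$. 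Indeed any $w\in\partial$ is a limit $w=\lim g_{a_n,b_n}x$; applying $p_{\text{aff}}$ and using $(p_2,p_4)\neq(0,0)$ forces the sequences to converge, say $a_n\to a_\infty$ and $b_n\to b_\infty\geq 0$. If $b_\infty>0$ then $g_{a_n,b_n}\to g_{a_\infty,b_\infty}$ inside $G_K^+$, and joint continuity of the (smooth) action gives $w=g_{a_\infty,b_\infty}x\in G_K^+\cdot x$, a contradiction. Hence $b_\infty=0$, so $p_{\text{aff}}(w)\in\ell\subset R$ and $w\in V_R$.

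Next I would run a local analysis at a boundary point $w\in\partial$. Choose a neighbourhood $N$ of $w$ on which $p_{\text{aff}}$ restricts to a biholomorphism onto a ball $N'\ni q:=p_{\text{aff}}(w)$; since the $G_K^+$-action on $V$ lifts the standard action and $p_{\text{aff}}$ is equivariant, in this chart the action is the standard model, so $\overline{G_K^+\cdot p}\cap N'$ is a half-open strip with boundary the open segment $\ell\cap N'$. The key point is that the connected set $p_{\text{aff}}^{-1}(G_K^+\cdot p)\cap N$, which is homeomorphic to that strip via $p_{\text{aff}}|_N$, lies inside a single $G_K^+$-orbit. Indeed $p_{\text{aff}}^{-1}(G_K^+\cdot p)$ is a $2$-manifold partitioned into $G_K^+$-orbits which, being $2$-dimensional, are precisely its connected components; our connected piece meets $G_K^+\cdot x$ because $w\in\overline{G_K^+\cdot x}$, so the entire local sheet belongs to $G_K^+\cdot x$. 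Passing to closures then identifies $\partial\cap N=(p_{\text{aff}}|_N)^{-1}(\ell\cap N')$, an open segment carried homeomorphically onto $\ell\cap N'$.

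Finally I would globalise. The previous step shows that $\partial$ is a closed subset of $V_R$ which is locally an open segment, hence a $1$-manifold without boundary, and that $p_{\text{aff}}$ restricts to a local homeomorphism $\partial\to\ell$. Each connected component $C$ is therefore $\mathbf{R}$ or $S^1$; the circle is excluded since a local homeomorphism from a compact space to the connected non-compact line $\ell$ would have clopen image. Thus $C\cong\mathbf{R}$, and a local homeomorphism of connected $1$-manifolds $C\to\ell$ is monotone, hence an embedding. Consequently $\partial$ is a disjoint union of segments, each embedded by $p_{\text{aff}}$ into the common line $\ell\subset R$, which is the assertion. The main obstacle is exactly the local single-sheet argument: one must rule out that several distinct orbits of $p_{\text{aff}}^{-1}(G_K^+\cdot p)$ accumulate at $w$ and splinter the boundary, and this is where the injectivity of $p_{\text{aff}}$ on $N$, together with the identification of orbits as the connected components of $p_{\text{aff}}^{-1}(G_K^+\cdot p)$, is indispensable.
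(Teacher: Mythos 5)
Your write-up correctly and carefully fills in what the paper only sketches, and it follows the same route the paper takes: transport everything through the equivariant local biholomorphism $p_{\text{aff}}$ and read the answer off the explicit downstairs picture, where $\overline{G_K^+\cdot p}$ is a closed half-plane with boundary the line $\ell\subset R$. Your three steps are all sound: the convergence analysis forcing $b_\infty=0$ (hence $\partial\subseteq V_R$ with $p_{\text{aff}}(\partial)\subseteq\ell$), the identification of the $G_K^+$-orbits with the connected components of $p_{\text{aff}}^{-1}(G_K^+\cdot p)$ via invariance of domain (which legitimizes the local single-sheet claim), and the globalization showing each component of $\partial$ is a copy of $\mathbf R$ embedded onto an open subinterval of $\ell$.

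However, you stop short of the full assertion. The lemma does not merely say that each boundary segment embeds into the line; the word \emph{simultaneously} means that $p_{\text{aff}}$ is injective on the whole of $\partial$, i.e.\ distinct segments have disjoint images in $\ell$. This is exactly the content the paper uses afterwards: in the definition immediately following the lemma, the \emph{disjoint union} of the segments $R^i_x$ (attached to different components $V_R^i$) is asserted to embed into the single line $R_x$. Your proof leaves open the possibility that two different components of $\partial$ map onto overlapping subintervals of $\ell$. The missing ingredient is precisely the fact the paper's proof leads with: $p_{\text{aff}}$ is injective on the single orbit $G_K^+\cdot x$ (if $g_1x$ and $g_2x$ have the same image, then $g_1p=g_2p$, and freeness of the downstairs action on $\mathbf C^2\setminus R$ gives $g_1=g_2$). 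With that fact your own local-sheet argument closes the gap: if $w_1\neq w_2\in\partial$ had the same image $q\in\ell$, choose disjoint neighbourhoods $N_1,N_2$ of $w_1,w_2$ mapping homeomorphically onto one ball $B$ centered at $q$; both local sheets $\Sigma_i=p_{\text{aff}}^{-1}(G_K^+\cdot p)\cap N_i$ lie in $G_K^+\cdot x$ by your component argument, and both map homeomorphically onto the same half-disc $(G_K^+\cdot p)\cap B$, so injectivity of $p_{\text{aff}}$ on the orbit produces a point common to $\Sigma_1$ and $\Sigma_2$, contradicting $N_1\cap N_2=\emptyset$. You should add this step; without it the statement as written (and as later used) is not proved.
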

\begin{proof}
Indeed the restriction of $p_{\text{aff}}$ on any $G_K^+$-orbit is an embedding,
while the action of $G_K^+$ on $V\setminus V_R$ is free, and hence the boundary of
such orbits is contained in $V_R$. Such boundary embeds into 
the boundary of the closure of $G_K^+\cdot p_{\text{aff}}(x)\subset \mathbf C^2$,
which is the line $R_{p_{\text{aff}}(x)}$ corresponding to the point $p_{\text{aff}(x)}$.
\end{proof}

\begin{definition}
We define $R^i_x$ the segment of line $R_{p_{\text{aff}}(x)}$ which is contained in $U_i$.
If $x$ is contained in several such $U_i$'s, then the disjoint union of $R^i_x$ embeds
into $R_x$ under $p_{\text{aff}}$.
\end{definition}

First we handle affine structures (iv) and (v). 
Observe that the open domain $U_i$
contains a non-trivial 1-cycle $S^1_i$  which is the boundary of
a transversal disc to $V_R$ in $V$.

\begin{lemma}$[S^1_i]\in \Gamma$ maps to the generating central element
of the group $\tilde{A(2,\mathbf R)}$ under $\rho$.
\end{lemma}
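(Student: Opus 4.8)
The plan is to read off the holonomy of the meridian $S^1_i$ from the explicit action of $G_K$ on the normal directions of $V_R^i$, and then to match that holonomy with the central generator of $\tilde A(2,\mathbf R)$ through the angle map of diagram \ref{diagrammmm}. Throughout, the class $[S^1_i]$ is to be read through the lifted holonomy $\tilde\rho$ into $\tilde A(2,\mathbf R)$ coming from the angle refinement, since the affine part will turn out to be trivial.

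First I would fix a point $r\in V_R^i$ together with a transversal disc $D\subset U_i$ meeting $V_R^i$ only at $r$, so that $S^1_i=\partial D$; such a disc exists because $G_K^+$ acts freely on $V\setminus V_R$. Since $p_{\text{aff}}$ is a local biholomorphism and $V_R=p_{\text{aff}}^{-1}(R)$, a neighbourhood of $r$ is identified with a neighbourhood of $p_{\text{aff}}(r)$ in $\mathbf C^2=R\oplus iR$, under which $D$ becomes a small disc in the normal plane $iR$. Using the matrix form of $G_K^+$ from Lemma \ref{G_Kaffine}, an element acts by $g(x)=(x_1+ax_2,bx_2,x_3+ax_4,bx_4)$ with $b>0$, so the $G_K^+$-orbit of a normal vector $(0,x_2,0,x_4)$ is exactly the ray $\mathbf R_{>0}\cdot(x_2,x_4)$. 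Passing to $G_K^+$-orbits therefore carries $S^1_i$ homeomorphically onto the circle of oriented lines through $p_{\text{aff}}(r)$, and this loop is a generator of $\pi_1(Gr(1,2)^o)$: one trip around $S^1_i$ turns the oriented direction through a full $2\pi$.

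Next I would transport this winding through the angle map. By diagram \ref{diagrammmm} the angle structure is recorded by the $\Gamma$-equivariant tower $\tilde S\to\tilde{Gr(1,2)^o}\xrightarrow{s}\mathbf R$, whose holonomy is the lift $\tilde\rho$ into $\tilde A(2,\mathbf R)$, and the central generator $c$ of $\tilde A(2,\mathbf R)$ acts on $\mathbf R$ by translation through one period, the full $2\pi$ rotation of oriented directions. The computation of the previous step shows that lifting $S^1_i$ advances $s$ by precisely one period, so the monodromy of the meridian equals the action of $c$. Finally, since $S^1_i$ bounds the disc $D$ inside the simply connected $V$, its underlying affine transformation is trivial: its image in $\pi_1(X)$ is trivial, hence the ordinary holonomy $\rho$ sends it to the identity of $A(2,\mathbf R)$ and $\tilde\rho([S^1_i])$ lies in $\ker\big(\tilde A(2,\mathbf R)\to A(2,\mathbf R)\big)=Z=\langle c\rangle$. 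Together with the angle computation this forces $\tilde\rho([S^1_i])=c^{\pm1}$, the generating central element.

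I expect the main obstacle to be the degree bookkeeping at the interface of the two steps: one must keep the double cover $Gr(1,2)^o\to Gr(1,2)$ and the two components of $G_K$ consistent, so that a single trip around $S^1_i$ corresponds to exactly one period of the oriented–direction circle, hence to $c^{\pm1}$ and not to a higher multiple, and one must fix orientation conventions so that the sign is pinned down. A secondary point is to verify that the identification of $S^1_i$ with the circle of oriented lines through $p_{\text{aff}}(r)$ is genuinely a homeomorphism and not merely a covering, which is where the positivity $b>0$ in the description of $G_K^+$ is essential.
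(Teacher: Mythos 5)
Your proposal is correct and follows essentially the same route as the paper's one-sentence argument: the paper observes that $S^1_i$, being the boundary of a disc transversal to $V_R$, maps under $p_{\text{aff}}$ to a meridian generating the cyclic group $\pi_1(\mathbf C^2\setminus \mathbf R^2)$, which is exactly the central $\mathbf Z\subset \tilde A(2,\mathbf R)$ acting by deck transformations on $\tilde{\mathbf C^2\setminus \mathbf R^2}$. Your detour through $Gr(1,2)^o$ and the angle map, plus the centrality check via triviality of $[S^1_i]$ in $\pi_1(X)$, is an expanded rendering of that same identification (the quotient by the contractible group $G_K^+$ is a homotopy equivalence, so the two computations of the class agree), not a different method.
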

\begin{proof}
Indeed it maps into the generating element of the cyclic group
$\pi_1(\mathbf C^2\setminus \mathbf R^2)$ under the map $V\to \mathbf C^2$.
\end{proof}

\begin{corollary} The pre-image of $S^1_i$ in $\tilde {V \setminus V_R}$
surjects onto $\mathbf R$ under the angle map $s:\tilde S\to \mathbf R$.
\end{corollary}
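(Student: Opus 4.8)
The plan is to read the statement off from the linking geometry of the transversal circle $S^1_i$ together with the commutativity of diagram \ref{diagrammmm}, reducing everything to an elementary fact about covers of a circle. First I would compute the effect of the angle map on $S^1_i$ itself, before lifting to universal covers. Since $S^1_i$ bounds a transversal disc to $V_R^i$ inside $V$, its image $p_{\text{aff}}(S^1_i)$ bounds a transversal disc to $R$ inside $\mathbf C^2$, hence is a small loop linking the real codimension-$2$ plane $R$ exactly once. In the real splitting $\mathbf C^2=R\oplus iR$ one has $\mathbf C^2\setminus R\cong R\times(iR\setminus\{0\})$, and the composite $\mathbf C^2\setminus R\to Gr(1,2)^o\to S^1$ is exactly the projection to the second factor followed by the radial retraction $iR\setminus\{0\}\to S^1$; this is a homotopy equivalence, so the restriction of the angle map to $S^1_i$ is a map $S^1_i\to S^1$ of degree $\pm1$. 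This is precisely the geometric content of the previous Lemma: $[S^1_i]$ maps to the central generator $c$ of $\tilde A(2,\mathbf R)$, and $c$, being the class of the full rotation loop $SO(2)\subset\GL_2(\mathbf R)$, advances the oriented direction of a line once around $S^1$.

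Next I would pass to universal covers. Because $c$ has infinite order, $[S^1_i]$ has infinite order in $\pi_1(V\setminus V_R)$, so the map $\pi_1(S^1_i)\to\pi_1(V\setminus V_R)$ is injective; consequently each connected component $\ell$ of the pre-image of $S^1_i$ in $\widetilde{V\setminus V_R}$ is a copy of $\mathbf R$ realising the universal cover of $S^1_i$, and $\ell$ is stabilised by the cyclic group $\langle[S^1_i]\rangle$, which acts on it as the generating deck translation. The fibration $V\setminus V_R\to S$ has contractible fibre $G_K^+$, hence induces an isomorphism $\pi_1(V\setminus V_R)\cong\pi_1(S)$ and a lift $\widetilde{V\setminus V_R}\to\tilde S$; post-composing with $s\circ\tilde q$ yields the angle map $\tilde S\to\mathbf R$ of the statement. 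By construction this lifted angle map is equivariant for the homomorphism $\pi_1(V\setminus V_R)\to\pi_1(S^1)=\mathbf Z$ acting by translation, and under this homomorphism $[S^1_i]$ goes to the degree $\pm1$ computed above.

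To finish, I would note that $s(\ell)$ is the continuous image of the connected set $\ell$, hence a connected subset (an interval) of $\mathbf R$, and that equivariance gives $s(\ell)=s([S^1_i]\cdot\ell)=s(\ell)\pm1$, so $s(\ell)$ is invariant under the nontrivial translation $t\mapsto t\pm1$. The only nonempty interval in $\mathbf R$ invariant under a nonzero translation is $\mathbf R$ itself, whence $s(\ell)=\mathbf R$; a fortiori the full pre-image of $S^1_i$ surjects onto $\mathbf R$.

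I expect the main obstacle to be bookkeeping rather than a genuine analytic difficulty: one must check that the map called ``the angle map $s$ on $\tilde S$'' really is the lift to universal covers of $V\setminus V_R\to Gr(1,2)^o\to S^1$, so that the degree $\pm1$ computed on $S^1_i$ downstairs is exactly the translation length upstairs. This is where the commutativity and $\Gamma$-equivariance encoded in diagram \ref{diagrammmm} enter, together with the identification of $c$ with the full rotation. The one delicate normalisation is the passage between oriented and unoriented directions, i.e. between $Gr(1,2)^o$ and $Gr(1,2)$: a slip there would alter the translation length by a factor of $2$, but never make it vanish, and vanishing is the only thing the surjectivity argument actually needs to exclude.
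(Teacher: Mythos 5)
Your proof is correct and is essentially the paper's intended argument: the paper states this corollary without proof as an immediate consequence of the preceding Lemma (that $[S^1_i]$ maps to the central generator, i.e.\ the generator of $\pi_1(\mathbf C^2\setminus R)$), and your degree-$\pm1$ linking computation plus the translation-invariance of the connected image $s(\ell)$ is exactly the deduction being left to the reader. The bookkeeping you flag (equivariance of the lifted angle map, the $Gr(1,2)$ versus $Gr(1,2)^o$ normalisation) is handled correctly and, as you note, cannot affect surjectivity.
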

The next Corollary uses the notation about $s$-vertical lines $l_x$ introduced 
in the previous Section.
\begin{corollary}
If $\tilde{S^1_i}\subset L_x$ for some vertical segment
$l_x\in \tilde S$ then $ L_x= \tilde S$
and  hence $\tilde {V\setminus V_R}$ embeds in $\tilde {\mathbf C^2\setminus \mathbf R^2}$.
\end{corollary}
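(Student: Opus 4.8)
The plan is to combine the surjectivity of $s$ on $\tilde{S^1_i}$, established in the preceding Corollary, with the rigidity of a maximal strip $L_x$ over its image interval. First I would record that, by its very definition, $L_x$ is the maximal subset of $\tilde S$ on which the angle map $s$ restricts to a locally trivial fibration with interval fibers; in particular $s(L_x)=I$ is an open connected subinterval of $\mathbf R$. Since the hypothesis gives $\tilde{S^1_i}\subset L_x$, and since the preceding Corollary shows $s(\tilde{S^1_i})=\mathbf R$, we obtain $\mathbf R=s(\tilde{S^1_i})\subseteq s(L_x)=I$, whence $I=\mathbf R$.

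Next I would prove $L_x=\tilde S$. The surface $\tilde S$ is connected, and $L_x$ is nonempty and open: over each interior point of $I$ the fibration $s|_{L_x}$ is locally a product of an interval in the base with the interval fiber, so $L_x$ is a union of full vertical segments over the open set $I$ and is itself open in $\tilde S$. It remains to see that $L_x$ is closed, i.e. that its boundary $\partial_x=\partial L_x$ in $\tilde S$ is empty. As discussed in the previous section, $\partial_x$ is exactly the locus where the fibration property of $s$ breaks down, so $s(\partial_x)$ is contained in the set of finite endpoints of the interval $I=s(L_x)$; as $I=\mathbf R$ has no finite endpoints, $\partial_x=\emptyset$. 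Being open, closed and nonempty in the connected surface $\tilde S$, the strip $L_x$ must equal $\tilde S$.

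Finally I would upgrade this to the stated embedding. Because $L_x$ is a strip, the $\Gamma$-equivariant local diffeomorphism $\tilde q$ restricts to an embedding $L_x\hookrightarrow \tilde{Gr(1,2)^o}$: each vertical segment embeds into the corresponding line of $\tilde{Gr(1,2)^o}$, and the fibration structure over $I$ glues these into a global embedding. With $L_x=\tilde S$ this says that $\tilde q\colon \tilde S\to \tilde{Gr(1,2)^o}$ is globally injective. The maps $\tilde{V\setminus V_R}\to \tilde S$ and $\tilde{\mathbf C^2\setminus\mathbf R^2}\to \tilde{Gr(1,2)^o}$ underlying diagram \ref{diagrammmm} are principal $G_K^+$-bundles, trivial since $G_K^+\cong \Aff(1,\mathbf R)^+$ is contractible, and compatible because $p_{\text{aff}}$ restricts to an embedding on every $G_K^+$-orbit. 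Hence the injectivity of $\tilde q$ on bases, together with injectivity along each contractible fiber, lifts to injectivity of $\tilde{V\setminus V_R}\to \tilde{\mathbf C^2\setminus\mathbf R^2}$, which is the asserted embedding.

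The step I expect to be the main obstacle is the closedness claim: making rigorous that the boundary of a maximal strip can only lie over the finite endpoints of its image interval. This depends on the maximality of $L_x$ together with the local triviality of $s$ over interior points of $I$, which together forbid any branching of the fiber tree $\pi_0(s)$ above interior points; once this is secured, the bundle-pullback in the last step is essentially formal.
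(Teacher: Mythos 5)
Your proposal is correct and follows essentially the same route as the paper: the paper's (much terser) proof likewise notes that $L_x$ surjects onto $\mathbf R$, concludes from the absence of boundary (which, as established in the preceding section, can only lie over the one or two endpoints of $s(L_x)$) that $L_x=\tilde S$, and then obtains the embedding of $\tilde{V\setminus V_R}$ by the same $G_K^+$-bundle lifting used in the earlier remark on connected fibers. The details you supply — openness of the strip, the open-closed-connected argument, and the explicit triviality of the $G_K^+$-bundles — are exactly the content the paper leaves implicit.
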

\begin{proof}
Indeed $L_x$ surjects onto $\mathbf R$ and is simply connected
by the definition $L_x$. It has no boundary in $\tilde S$
and hence it is equal to $\tilde S$.
\end{proof}

\begin{lemma}
In cases (iv) and (v) for any vertical segment $l_x$ in $U_i$ we have $U_i= L_x$. 
\end{lemma}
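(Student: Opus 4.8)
The plan is to prove the two inclusions $U_i \subseteq L_x$ and $L_x \subseteq U_i$ separately, exploiting the fact that in cases (iv) and (v) the developing image of $V_R^i$ is a \emph{convex} region of $R$: a quadrant $Q$ in case (iv) and an open half-plane $H$ in case (v). The entire argument rests on translating the defining condition of $U_i$ -- that the closure of a $G_K$-orbit meets $V_R^i$ -- into a statement about which lines of $R$ meet this convex region, using that the boundary of $\overline{G_K^+\cdot y}$ embeds into the line $R_{p_{\text{aff}}(y)}\subset R$.

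First I would establish $U_i \subseteq L_x$ by showing that the image of $U_i$ in $\tilde S$ is an untwisted fibered region for the angle map $s$. Fix a direction $\theta$. An orbit $G_K^+\cdot y$ with $y \in U_i$ and $s$-value $\theta$ corresponds, via $p_{\text{aff}}$, to a line of direction $\theta$ in $R$ whose intersection with the developing image of $V_R^i$ is non-empty; by convexity of $Q$, respectively $H$, the set of offsets of such lines is an interval, so the fibre $s^{-1}(\theta)$ meets the image of $U_i$ in a single connected vertical segment. Since $U_i$ is connected and $G_K$-invariant, its image therefore fibres over an interval of angles with connected fibres, and so is contained in the maximal such region, namely $L_x$ for any $l_x \subseteq U_i$.

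For the reverse inclusion I would run a connectedness argument on $L_x$. To each vertical segment $l_z \subseteq L_x$ assign the limiting segment of the corresponding $G_K^+$-orbit, i.e. the part of $\partial\,\overline{G_K^+\cdot z}$ lying in $V_R$; this is non-empty since every $G_K^+$-orbit closure has a full line of $R$ as boundary. As $z$ slides through the fibered region $L_x$, this boundary segment varies continuously inside the closed set $V_R$. Because $V_R$ is a disjoint union of its components $V_R^j$ and $L_x$ is connected, the boundary must remain in the single component it occupies over $l_x$, namely $V_R^i$. Hence every orbit of $L_x$ limits onto $V_R^i$, so $L_x \subseteq U_i$, and combining with the first step gives $U_i = L_x$.

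The step I expect to be the main obstacle is making this reverse inclusion fully rigorous: one must verify that the assignment $l_z \mapsto (\text{limit segment in } V_R)$ is genuinely continuous and neither jumps components nor degenerates as $z$ approaches the boundary of $L_x$. Concretely this amounts to showing that $U_i \cap L_x$ is closed, not merely open, in $L_x$, which requires controlling the boundary behaviour of the orbit closures near $V_R^i$ using the local product structure of the tubular neighborhood $U_i$ about the torus $V_R^i$, together with the disjointness and closedness of the $V_R^j$. Once the equality is in hand, combining it with $\tilde{S^1_i} \subseteq U_i$ and the surjectivity of $s$ restricted to $\tilde{S^1_i}$ onto $\mathbf R$ forces, via the preceding Corollary, $L_x = \tilde S$, which pushes cases (iv) and (v) into the already-excluded classical situation.
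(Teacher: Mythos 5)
Your first inclusion, $U_i \subseteq L_x$, is essentially the paper's argument: convexity of the developing image of $V_R^i$ (a quadrant $Q$ in case (iv), a half-plane $H$ in case (v)) forces every line of $R$ to meet it in a connected set, and this is what makes the relevant $s$-fibers connected. The genuine gap is in your reverse inclusion $L_x \subseteq U_i$. It rests on the assertion that ``every $G_K^+$-orbit closure has a full line of $R$ as boundary.'' That is true in $\mathbf C^2$, where the closure of each half-plane orbit $L_y^{\mathbf C}\setminus L_y$ contains the whole real line $L_y$, but it is false in $V$: the developing map $p_{\text{aff}}$ is only a local biholomorphism, not a covering onto its image, so the closure in $V$ of an orbit $G_K^+\cdot z$ may have \emph{empty} boundary --- the paper's lemma in this section only asserts that the boundary is a (possibly empty) disjoint union of segments embedded in $R$. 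Indeed, if every orbit closure in $V$ met $V_R$, one would get $V=\bigcup_j U_j$ for free, which is exactly what cannot be assumed; the whole difficulty is that $U_i$ is a priori just some $G_K$-invariant neighborhood of $V_R^i$. Consequently your assignment $l_z\mapsto(\text{limit segment in }V_R)$ can degenerate to the empty set, and the continuity/connectedness argument collapses. You correctly flag this (closedness of $U_i\cap L_x$ in $L_x$) as the main obstacle, but it is left unresolved, and it is not a technicality: openness is the easy half, closedness is precisely the content.

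The instructive comparison is that the paper never proves, and never needs, the inclusion $L_x\subseteq U_i$. Instead of working with all of $U_i$, it considers $Y_i$, the image in $S$ of the $G_K^+$-orbit of the transversal circle $S^1_i\subset U_i$. Convexity gives that each such orbit closure meets $V_R$ in a connected set, hence the surjection $Y_i\to S^1$ has connected fibers, hence $\tilde{S^1_i}\subset L_x$; since $\tilde{S^1_i}$ surjects onto $\mathbf R$ under the angle map, the preceding Corollary immediately yields $L_x=\tilde S$ and the embedding of $\tilde{V\setminus V_R}$ into $\tilde{\mathbf C^2\setminus \mathbf R^2}$, which is all that the subsequent lemma uses. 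Your own closing paragraph is exactly this endgame, and it only requires $\tilde{S^1_i}\subset L_x$, i.e.\ your first inclusion restricted to the orbit of $S^1_i$. So the proposal is repairable by deleting the reverse inclusion altogether; as written, however, it contains a false premise and an unfilled hole at its stated hardest step.
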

\begin{proof}
Note that in these cases for $y\in U_i$ the orbit $G_K^+\cdot y$
intersects $V_R$ by segments, half-lines or lines.
Observe that lines in $V_R$ correspond to
subsets of parallel lines in $\mathbf R^2$ intersecting
the fundamental domain of affine structure, which is convex: 
it is a quadrant in case
(iv), and a half-plane in case (v).
In particular whenever a line intersects the fundamental domain,
the intersection is connected.
Now consider the $G_K^+$-orbit of $S^1_i$. Let us denote by $Y_i$ its image
under $V\setminus V_R\to S$.
Observe tha since $G_K^+\cdot y\cap V_R$ is connected for every $y$,
the surjection $Y_i\to S^1$ induced by angle map has connected fibers. This implies that
for any $x\in \tilde Y_i$, there is an inclusion $\tilde{S^1_i}\to L_x$. 
Therefore $L_x\to \mathbf R$ is onto, and the conclusion 
follows by the previous Corollary.
\end{proof}

\begin{lemma} 
 Cases iv) v) cannot occur.
\end{lemma}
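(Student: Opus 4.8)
My plan is to convert the embedding supplied by the preceding Lemma into a constraint on $\pi_1(V\setminus V_R)$ that cannot coexist with $V_R$ having more than one component; this route avoids any appeal to cohomological dimension or to the machinery of the previous section.

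First I would record the output of the preceding Lemma and Corollary: in cases (iv) and (v) one has $U_i=L_x=\tilde S$ for every vertical segment $l_x\subset U_i$, so the developing map yields an embedding $\tilde{V\setminus V_R}\hookrightarrow \tilde{\mathbf C^2\setminus \mathbf R^2}$ of universal covers, equivariant for the induced homomorphism $\phi:\pi_1(V\setminus V_R)\to \pi_1(\mathbf C^2\setminus R)\cong\mathbf Z$. The key observation is that $\phi$ is injective: if $\gamma\in\ker\phi$ were non-trivial it would act on $\tilde{V\setminus V_R}$ as a fixed-point-free deck transformation, while equivariance together with injectivity of the embedding forces $\gamma$ to fix every point of $\tilde{V\setminus V_R}$, a contradiction. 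Hence $\pi_1(V\setminus V_R)$ is isomorphic to a subgroup of $\mathbf Z$, in particular cyclic.

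Next I would exhibit enough topology in $V_R$ to violate this. The submanifold $V_R$ is the entire $\Gamma$-orbit of $V_R^i$, and the stabilizer of the component $V_R^i$ is abelian — it acts on $V_R^i\cong\mathbf R^2$ through $\pi_1(T_i)\cong\mathbf Z^2$ — hence of infinite index in $\Gamma$, for otherwise $\Gamma$ would be virtually abelian, contradicting Bombieri's property \cite{55} that the kernel of $\Gamma\to\Gamma^{ab}$ is infinitely generated. Thus $V_R$ has infinitely many components, each a real-codimension-$2$ submanifold of the simply connected $V$. Pairing loops in $V\setminus V_R$ with an individual component by linking number gives homomorphisms $H_1(V\setminus V_R)\to\mathbf Z$ under which the meridian class $[S^1_j]$ evaluates to $1$ on $V_R^j$ and to $0$ on the others; the classes $[S^1_j]$ are therefore linearly independent, so $H_1(V\setminus V_R)$ has rank at least $2$. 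This contradicts the cyclicity of $\pi_1(V\setminus V_R)$ established above, and rules out cases (iv) and (v).

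I expect the main obstacle to be the final homological step: one must check that linking with a single component is well defined on the complement of the whole infinite, non-compact family $V_R$ (so that the meridians of distinct components stay independent in $H_1(V\setminus V_R)$), rather than only on the complement of one component. Equivalently, the point is to rule out $[S^1_j]=[S^1_k]$ in $\pi_1(V\setminus V_R)$ for $j\neq k$ — which is precisely what the injectivity of $\phi$ would force, since every meridian $S^1_j$ has the same image, the generator of $\mathbf Z$. Once this independence is in hand the contradiction is immediate.
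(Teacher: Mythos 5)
Your first three steps are sound, and the first one in particular is a clean way to package the paper's own corollary: the equivariant embedding $\tilde{V\setminus V_R}\hookrightarrow \tilde{\mathbf C^2\setminus \mathbf R^2}$ does force $\phi:\pi_1(V\setminus V_R)\to \pi_1(\mathbf C^2\setminus R)\cong \mathbf Z$ to be injective, and the stabilizer of a component $V_R^i$ is indeed $\mathbf Z^2$ of infinite index, so $V_R$ has infinitely many components. The fatal problem is the last step. The linking-number homomorphisms you want do not exist here, and in fact your own step 2 disproves the independence you need. By the lemma immediately preceding this one in the paper, \emph{every} meridian $S^1_j$ maps under $p_{\text{aff}}$ to a generator of $\pi_1(\mathbf C^2\setminus R)\cong\mathbf Z$; so $\phi([S^1_j])=\pm 1$ for all $j$, and injectivity of $\phi$ forces $[S^1_j]=[S^1_k]^{\pm 1}$ in $\pi_1(V\setminus V_R)$ for all $j,k$. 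Thus all meridians coincide up to sign, $H_1(V\setminus V_R)\cong\mathbf Z$ is generated by any one of them, and there is no tension whatsoever between cyclicity and the existence of infinitely many components. The hidden hypothesis in the linking argument is not simple connectivity of $V$ but control of $H_2(V)$: a Mayer--Vietoris computation with a tubular neighborhood of $V_R$ (whose components are contractible in cases (iv),(v)) shows that $H_1(V\setminus V_R)$ is the quotient of $\bigoplus_j \mathbf Z[S^1_j]$ by the image of $H_2(V)$, and nothing forces $H_2(V)=0$. A model to keep in mind is $M=S^2\times\mathbf R^2$ with $N=\{p,q\}\times\mathbf R^2$: the ambient space is simply connected, $N$ has two components, yet the two meridians are homologous because $H_2(M)\neq 0$. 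In the very configuration you are trying to refute, $H_2(V)$ would simply have rank equal to the number of components of $V_R$ minus one; no purely topological contradiction from the pair $(V,V_R)$ is available.

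The contradiction has to come from the affine geometry, which is the route the paper takes. From the preceding lemma and corollary ($U_i=L_x=\tilde S$) the paper concludes that $\Gamma$ preserves $U_i$, hence $V_R^i$, hence that $\rho(\Gamma)$ preserves the image $p_{\text{aff}}(V_R^i)\subset R$, which is an affine quadrant in case (iv) and an open half-plane in case (v). An affine group preserving a quadrant must fix its vertex, so $\rho$ would be conjugate into $\GL_2(\mathbf R)$, contradicting Remark \ref{nonlinear} (the affine structure is not linear); an affine group preserving a half-plane preserves its boundary line, so $l\rho$ would be reducible, contradicting Lemma \ref{Gnotcyclic}. In other words, the embedding statement is exploited through the $\rho$-equivariance of the picture inside $\mathbf C^2$ and the previously established arithmetic/representation-theoretic constraints on $\rho$, not through the homology of the complement; your proposal replaces exactly the step that cannot be replaced.
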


\begin{proof}
Indeed $\Gamma$ maps $U_i$ into itself.
In case (iv) we have that $\rho(\Gamma)\subset GL(2,\mathbf R)$
corrsponding to the vertex of the quadrant.
In case (v) we have that $\rho (\Gamma)$ is contained in a triangular
subgroup of $A(2,\mathbf R)$ stabilizing the bounadry line
of the fundamantal domain $U_i$.
Both cases are impossible.
\end{proof}

Next we handle cases (i),(ii),(iii).

\begin{lemma}\label{bound3}
 Assume that $U$ is a domain in $V$ such that $p_{\text{aff}}$ restricts to a covering map 
 $U\to U':=p_{\text{aff}}(U)$
 with the property that dim$(\partial U')<3$. Then $V=\bar U$.
\end{lemma}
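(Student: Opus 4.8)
The plan is to prove that $\bar U$ is open in $V$; since $\bar U$ is automatically closed and nonempty, the connectedness of $V$ then forces $\bar U=V$. Throughout I use that $p_{\text{aff}}$ is a local biholomorphism, so every point of $V$ has a neighbourhood carried diffeomorphically onto a ball in $\mathbf C^2$. The first step is to extract the topological meaning of the hypothesis $\dim(\partial U')<3$. Since $\partial U'$ is a closed subset of $\mathbf C^2=\mathbf R^4$ of dimension at most $2=4-2$, the standard dimension-theoretic fact that a closed set of codimension at least two does not (locally) disconnect $\mathbf R^4$ applies. Writing $\mathbf C^2=U'\sqcup\partial U'\sqcup E$, with $E$ the exterior of $U'$ (the interior of its complement), the connectedness of $\mathbf C^2\setminus\partial U'$ together with the disjointness of the two open pieces $U'$ and $E$ forces $E=\varnothing$; hence $U'$ is dense in $\mathbf C^2$, and $W\cap U'=W\setminus\partial U'$ is connected for every ball $W$.

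Second, I record a sheet-uniqueness lemma: if $W_0\subset U'$ is a ball and $\sigma\colon W_0\to V$ is a continuous local section of $p_{\text{aff}}$ (so $p_{\text{aff}}\circ\sigma=\mathrm{id}$ and $\sigma(W_0)$ is one of the diffeomorphic neighbourhoods above), and if $\sigma(W_0)\cap U\neq\varnothing$, then $\sigma(W_0)\subset U$. Indeed, over the simply connected $W_0$ the covering $p_{\text{aff}}\colon U\to U'$ is trivial, $p_{\text{aff}}^{-1}(W_0)\cap U=\bigsqcup_\alpha S_\alpha$ with each $S_\alpha\to W_0$ a homeomorphism. If $\sigma(w)\in S_\alpha$ for one $w$, then $\sigma$ and the inverse section $s_\alpha$ of $S_\alpha$ are two local inverses of the local diffeomorphism $p_{\text{aff}}$ agreeing at $w$; their agreement locus is open and closed in the connected $W_0$, so $\sigma(W_0)=S_\alpha\subset U$.

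The third and main step treats an arbitrary $q\in\bar U$. Choose a neighbourhood $N\ni q$ carried diffeomorphically by $p_{\text{aff}}$ onto a ball $W\ni p_{\text{aff}}(q)$, with inverse $\phi\colon W\to N$, and set $T:=\{w\in W\cap U':\phi(w)\in U\}$. It is nonempty: approximating $q$ by points $u_n\in U$ eventually lying in $N$ gives $w_n:=p_{\text{aff}}(u_n)\in W\cap U'$ with $\phi(w_n)=u_n\in U$. Applying the sheet lemma to small balls inside $W\cap U'$ shows that $T$ is open (if $\phi(w)\in U$ then $\phi$ carries a whole ball around $w$ into $U$) and closed (a limit $w$ of points of $T$ lies in a ball $W_0\subset U'$ on which $\phi$ already meets $U$, hence $\phi(W_0)\subset U$). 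As $W\cap U'$ is connected by Step 1, $T=W\cap U'$, i.e.\ $\phi(W\cap U')\subset U$. Finally, since $W\cap U'$ is dense in $W$, every point of $N=\phi(W)$ is a limit of points of $\phi(W\cap U')\subset U$, so $N\subset\bar U$. Thus $\bar U$ is open, and the proof concludes.

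I expect the only delicate point to be Step 1: the clean passage from the dimension hypothesis $\dim(\partial U')<3$ to the two purely topological facts that $U'$ is dense and that $W\cap U'$ is connected. Once these are in hand, the covering-space bookkeeping in Steps 2 and 3 is routine, the engine being simply that two local inverses of the local biholomorphism $p_{\text{aff}}$ must agree on an open-and-closed set, so the single sheet $N$ over $W$ is pinned down by its values on the dense connected subset $W\cap U'$.
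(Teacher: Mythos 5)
Your proof is correct, but it runs in the opposite direction from the paper's, which is a two-line argument by contradiction carried out entirely upstairs in $V$: if $\bar U\subsetneq V$, then $\partial U$ is a nonempty closed set which, by the very same dimension-theoretic separation fact you invoke (a closed set of dimension at most $2$ cannot disconnect a connected $4$-manifold), must have topological dimension $3$; since $p_{\text{aff}}$ is a local isomorphism, and since the covering hypothesis forces $p_{\text{aff}}(\partial U)\subseteq \partial U'$, this gives $\dim \partial U'\geq 3$, contradicting the hypothesis. You instead work downstairs first, extracting from $\dim\partial U'<3$ the density of $U'$ and the connectedness of $W\cap U'$ for every ball $W$, and then run a continuation (monodromy) argument to show $\bar U$ is open, concluding by connectedness of $V$. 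The ingredients are identical --- the codimension-two non-separation theorem plus the covering hypothesis --- but they are deployed in different spaces and in a different logical order. What your version buys is that it makes explicit precisely the two points the paper leaves implicit: your sheet-uniqueness lemma is exactly what is needed to justify the paper's tacit claim that $\partial U$ maps into $\partial U'$ (without the covering hypothesis boundary points of $U$ could map into $U'$, and the lemma would fail), and your Step 1 isolates how the dimension bound is converted into topology. What the paper's version buys is brevity: a single dimension count, no bookkeeping of local sections, with the contradiction located in the one geometric fact that a proper subdomain of $V$ would have a $3$-dimensional frontier.
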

\begin{proof}
 If $\bar U\subsetneq V$, then $\partial U$ had dimension 3, and hence the same
 holds for $\partial U'$ since $p_{\text{aff}}$ is a local isomorphism on $U$.
\end{proof}
\begin{lemma}\label{no1,2,3}
The affine structure on $V_R^i$ cannot be of type (i),(ii) or (iii). 
\end{lemma}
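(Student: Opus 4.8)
The plan is to prove that in each of the cases (i), (ii), (iii) the developing map $p_{\text{aff}}$ is forced to be a global embedding, thereby returning us to the classical situation in which $\Gamma$ is discrete in $\tilde A(2,\mathbf R)$, already shown to be impossible.

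The starting point is the feature that distinguishes these three cases from (iv) and (v): every line of the torus $V^i_R$, outside at most a codimension-$1$ family of directions, is a \emph{complete} line and maps isomorphically onto a full real line $R_x\subset R$. First I would use this to control the $G_K^+$-orbits meeting $U_i$. For $x\in U_i$ lying over a non-exceptional direction, the preceding Lemma shows that the boundary of $\overline{G_K^+\cdot x}$ embeds into $R$; completeness upgrades this to the statement that the boundary is a full line $R_{p_{\text{aff}}(x)}$. Assembling these orbit closures, one checks that $p_{\text{aff}}$ restricts to a covering map $U_i\to U_i':=p_{\text{aff}}(U_i)$ whose image-boundary $\partial U_i'$ is contained in $R$ together with the (real $2$-dimensional) complexifications of the exceptional lines; hence $\dim\partial U_i'<3$.

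At this point Lemma \ref{bound3} applies and yields $V=\overline{U_i}$. Since the complement of $U_i'$ in $\mathbf C^2$ then has real codimension at least $2$, the covering $p_{\text{aff}}|_{U_i}$ has degree one and $p_{\text{aff}}$ is a global embedding. Two observations now close the argument. On the one hand, injectivity forces $V_R=p_{\text{aff}}^{-1}(R)$ to be the single component $V^i_R$, so that the whole of $\Gamma$ stabilizes it and $\Gamma$ is (virtually) the fundamental group of one torus, hence abelian; this contradicts the irreducibility of $l\rho$ from Lemma \ref{Gnotcyclic}. On the other hand, an embedding places us in the classical case: $\Gamma$ is a finitely generated discrete subgroup of $\tilde A(2,\mathbf R)$, the hypotheses of Theorem \ref{discreteinaffine} hold (irreducibility by Lemma \ref{Gnotcyclic}, the determinant condition from the canonical-bundle character), and the argument of Proposition \ref{absurd11} produces $\mathrm{rk}\,H^1(X,\mathbf Q)\ge 2$, against $\dim H^1(X,\mathbf C)=1$.

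The step I expect to be the main obstacle is the second paragraph: checking that $p_{\text{aff}}$ genuinely restricts to a covering onto its image and that $\partial U_i'$ stays below dimension $3$. This is precisely where completeness of the lines in (i)--(iii) is indispensable, since in cases (iv) and (v) the half-lines and closed intervals create a three-dimensional image-boundary and Lemma \ref{bound3} breaks down --- which is why those cases needed the separate treatment given above. A secondary point requiring care is confirming that the exceptional through-origin directions really contribute only a codimension-$\ge 1$ locus, so that they do not inflate $\partial U_i'$.
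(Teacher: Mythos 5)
Your architecture is the same as the paper's: in cases (i)--(iii) the completeness of (almost all) lines forces $U_i$ to map onto essentially all of $\mathbf C^2$, Lemma \ref{bound3} then forces $U_i=V$, hence $\Gamma$ stabilizes the single component $V_R^i$, is almost abelian, and this contradicts the irreducibility of $l\rho$ from Lemma \ref{Gnotcyclic} --- your conclusion (a) is exactly the paper's proof. The problem is that your verification of the hypothesis of Lemma \ref{bound3} has a genuine gap, precisely at the point you defer as ``secondary''. You assert that $\partial U_i'$ is contained in $R$ together with the complexifications of the exceptional lines, ``hence'' $\dim\partial U_i'<3$. That inference fails: in cases (ii) and (iii) the exceptional lines are those through the origin, a one-parameter family, and while each complexification is real $2$-dimensional, their union is $\left\{z\in\mathbf C^2:\ \mathrm{Im}(z_1\bar z_2)=0\right\}$, a real \emph{hypersurface} of dimension $3$. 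So the containment you state is perfectly compatible with $\partial U_i'$ being $3$-dimensional, in which case Lemma \ref{bound3} gives nothing and the argument stops.

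The repair is not to bound the exceptional locus but to show it does not meet $\partial U_i'$ at all. Even a line $L$ through the origin meets $R\setminus\{0\}$ in two rays, and these rays lift into $V_R^i$ (in case (iii), into its cyclic covering of $R\setminus\{0\}$); taking $y$ in a small neighborhood of a point of such a lifted ray, the contracting flow of $G_K^+$ stays in that neighborhood and attaches the orbit of $y$ to $V_R^i$, so $y\in U_i$, and by $G_K^+$-equivariance of $p_{\text{aff}}$ the \emph{entire} half of $L^{\mathbf C}\setminus L$ containing $p_{\text{aff}}(y)$ lies in $p_{\text{aff}}(U_i)$. Consequently the image is exactly $\mathbf C^2$ in case (i) and exactly $\mathbf C^2\setminus\{0\}$ in cases (ii)--(iii), so $\partial U_i'$ is at most the single point $\{0\}$; this is what the paper's phrase ``$U_i$ maps isomorphically onto $\mathbf C^2$ (resp.\ $\mathbf C^2\setminus\{0\}$)'' encodes, the simple connectivity of the image upgrading your covering to an isomorphism. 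With this repair, your route (a) closes the proof exactly as in the paper. Your alternative route (b) should be dropped: Theorem \ref{discreteinaffine} and Proposition \ref{absurd11} require $\mathrm{cd}(\Gamma)=4$, i.e.\ a contractible $V$, which fails for $V\cong\mathbf C^2\setminus\{0\}$ in cases (ii)--(iii), and is in any case incompatible with the virtual abelianity of $\Gamma$ that route (a) establishes.
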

\begin{proof}
Note that $U_i$ in the case (i) maps isomorphically to $\mathbf C^2$. 
Similarly in case (ii) $U_i$ maps isomorphically onto
$\mathbf C^2\setminus \{0\}$, and in case (iii) $U_i$ maps as a universal covering of
$\mathbf C^2\setminus \{0\}$, and hence is an isomorphism.
From lemma \ref{bound3} we deduce that in all these cases $U_i=V$.
But then $\Gamma$ stabilizes $V_R^i$, hence $\Gamma$ is almost abelian, {\it i.e.}
contains an abelian subgroup of finite index coming from the action on $V_R^i$.
Hence $l\rho$ is reducible on a finite \'etale cover of $X$, contradicting Lemma \ref{Gnotcyclic}. 
\end{proof}

This finishes the proof of the Main Theorem 
\ref{bogomolov}.

\end{document}